\documentclass[12pt]{amsart}

\calclayout

\usepackage[dvipsnames]{xcolor}
\usepackage{caption}            % figures
\usepackage{subcaption}         % subfigures
\usepackage{multirow}           % multirow/col tables
\usepackage{listings}           % code 
\usepackage{url}      
\usepackage[lite]{amsrefs}
\usepackage{amssymb}
\usepackage{float}    

\usepackage[all,cmtip]{xy}

\usepackage{mathrsfs}
\usepackage{tikz-cd}
\usepackage{tabularx}
\usepackage{booktabs}
\usepackage[labelfont=bf,format=plain,justification=raggedright,singlelinecheck=false]{caption}

\newcommand{\F}{\mathbb{F}}

\newcommand{\pc}{\mathcal{P}}
\newcommand{\qc}{\mathcal{Q}}
\numberwithin{equation}{section}
\newcommand{\m}[1]{\ (\text{mod } #1)}
\definecolor{airforceblue}{rgb}{0.36, 0.54, 0.66}
\definecolor{amber1}{rgb}{1.0, 0.49, 0.0}
\definecolor{amber}{rgb}{1.0, 0.75, 0.0}
\definecolor{antiquefuchsia}{rgb}{0.57, 0.36, 0.51}

\theoremstyle{plain}
\newtheorem{theorem}[equation]{Theorem}
\newtheorem{corollary}[equation]{Corollary}
\newtheorem{lemma}[equation]{Lemma}

\theoremstyle{definition}

\theoremstyle{remark}
\newtheorem{remark}[equation]{Remark}

\usepackage{bookmark}

\usepackage{hyperref}

\newcommand{\fl}[1]{\left\lfloor #1 \right\rfloor}

\begin{document}

\title[Fermat curve]{On Weierstrass semigroups of maximal Fermat function fields}

\author{Peter Beelen} \address{Department of Applied Mathematics and Computer Science, Technical University of Denmark, Kongens Lyngby 2800, Denmark} \email{pabe@dtu.dk} \thanks{}
\author{Maria Montanucci} \address{Department of Applied Mathematics and Computer Science, Technical University of Denmark, Kongens Lyngby 2800, Denmark} \email{marimo@dtu.dk} \thanks{}
\author{Marie Frank vom Braucke} \address{Department of Applied Mathematics and Computer Science, Technical University of Denmark, Kongens Lyngby 2800, Denmark}  \email{mfvbr@dtu.dk} \thanks{}

\address{}

\date{}

 \begin{abstract}
In this article we explicitly determine the Weierstrass semigroup at any place of some $\mathbb{F}_{q^2}$-maximal Fermat function fields $\mathcal{F}_m$, namely for $m=(q+1)/2$ and $m=(q+1)/3$. These famous function fields arise as Galois subfields of the Hermitian function field, and even though they have been intensively studied in the literature, the Weierstrass semigroup at every place is still not fully known.
Surprisingly enough this problem is in fact quite involved and $\mathcal{F}_m$ has many different types of Weierstrass semigroups. Moreover, its set of Weierstrass places is much richer than its set of rational places.
\end{abstract}

\maketitle

\vspace{0.5cm}\noindent {\em Keywords}: Maximal function field, Weierstrass semigroup, Weierstrass places

\vspace{0.2cm}\noindent{\em MSC}: Primary: 11G20. Secondary: 11R58, 14H05, 14H55

\vspace{0.2cm}\noindent

\section{Introduction}
\label{sec:introduction}

%\textcolor{red}{Literature on Fermat curves. Mention that HKT studies the curve for $m=(q+1)/2$ in Chapter 10, pages 436-439. Paper on Fermat curves: when maximal. m needs to divide q+1 Tafazolian, Garcia and Torres On maximal curves of Fermat curves. Arakelian and Borges. m=(q+1)/3. Third largest genus $X_3$. Codes and maximal curves of high genus. Garcia-Voloch??}

An algebraic function field $F$ over a finite field with square cardinality is called maximal if its number of places of degree one attains the Hasse-Weil upper bound. More precisely, if $F$ is an algebraic function field of genus $g$
defined over the finite field $\mathbb{F}_{q^2}$ of cardinality $q^2$, then the Hasse-Weil upper bound states that
$$N(F) \le q^2 + 1 + 2qg,$$
where $N(F)$ denotes the number of places of degree 1 (sometimes also called $\mathbb{F}_{q^2}$-rational places) of $F$.
Maximal function fields, especially those with large genus, have been and still are extensively investigated in the literature. Apart from their being extremal objects, and hence being theoretically interesting, they attracted particular attention in connection with coding theory and cryptography, due to Goppa's construction of error-correcting codes \cite{Goppa}. In this construction algebraic function fields with many places of degree one are particularly suitable to give rise to good, long error-correcting codes. Therefore, it is natural to study maximal function fields and particularly those of large genus.
In constructing error-correcting codes from algebraic function fields, Weiestrass semigroups and Weierstrass places play a central role. This is why they have been intensively investigated in relation with maximal function fields, see for example \cites{BM,vicino,FT,GT,GTo,GV}. Given a place $P$ on $F$, the Weierstrass semigroup $H(P)$ is defined as the set of nonnegative
integers $n$ for which there exists a function $f \in F$ with $(f)_\infty = nP$. From the Weierstrass gap Theorem \cite[Theorem 1.6.8]{Sti}, the set $G(P) := \mathbb{N} \backslash H(P)$ contains exactly $g$ elements (called gaps). Clearly the set $H(P)$ in general may vary as the place $P$ varies, and different Weierstrass semigroups can be expected at different places of $F$. However, it is known that generically the semigroup $H(P)$ is the same, but that there can exist finitely many places of $F$, called Weierstrass places, with a different set of gaps.

The interest and potential of Weierstrass semigroups do not only live in their uses in coding theory, but in fact also theoretical applications are known. An interesting one is in relation with finding characterizing properties of algebraic function fields, a fact that was already pointed out in Stöhr-Voloch theory \cite{SV}. 

In this paper we analyze Weierstrass semigroups of certain Fermat function fields. Given $q$ a power of a prime $p$ and $m$ coprime with $p$, the Fermat function field $\mathcal{F}_m=\mathbb{F}_{q^2}(X,Y)$ is given by the defining equation 
$$X^m+Y^m+1=0.$$

The Fermat function fields are among some of the most studied algebraic function fields. Apart for their purely theoretical interest \cite{Borg, MoS, Sat, TT1} and their use in coding theory \cite{Sp}, they arise in finite geometry, where it was shown that non-Desarguesian finite flag-transitive projective planes exist if and only if certain Fermat function fields have no nontrivial rational places \cite{KZ}, in the construction of arcs in finite projective planes \cite{B, GPU}, and in the study of algebraically constructed lattices \cite{PS}, among others.

The function field $\mathcal{F}_m$ is known to be maximal over $\mathbb{F}_{q^2}$ precisely when $m$ divides $q+1$, see \cite{GT}. In this manuscript we compute the Weierstrass semigroup at every place of the function field $\mathcal{F}_m$ for $m=(q+1)/2$ and $m=(q+1)/3$.
This is the first time that a full investigation of this type is done for this class of function fields. Partial results for the first case can be found in \cite[Chapter 10, pages 436-439]{HKT}, while partial results for the latter are available in \cite{BH}.

The paper is organized as follows. In Section \ref{sec:preliminaries} we recall some preliminary notions and results about function fields and Weierstrass semigroups, together with a collection of some initial properties of the function field $\mathcal{F}_m$. In Sections \ref{sec:three} and \ref{sec_four} we compute the Weierstrass semigroups at every place of $\mathcal{F}_m$ for $m=(q+1)/2$ and $m=(q+1)/3$. Section \ref{sec:three} starts analyzing the Weierstrass semigroups at some special places of $\mathcal{F}_m$ for any $m$ dividing $q+1$, as well as considering some explicit power series expansion for function that will be used later to generate elements of the Weierstrass semigroups $H(P)$. Then the section specializes in different subsections on the two particular values of $m$ written above. As it becomes clear in Section \ref{sec:three}, the case $m=(q+1)/3$ is more involved and computational. In fact Section \ref{sec_four} is devoted to completing this case. 

\section{Preliminaries}
\label{sec:preliminaries}
In this section, we deal with the preliminary notions and results that will be needed throughout the paper. In the first subsection, we recall the definition of the Fermat function field $\mathcal{F}_m$ and we focus on some particular rational functions defined on it, computing their principal divisors. In the second subsection, we collect some preliminaries on numerical/Weierstrass semigroups and their connection with regular differentials. 

\subsection{The function field $\mathcal{F}_m$.}
Let $q$ be a prime power and let $m$ be an arbitrary divisor of $q+1$. Let $\mathbb{F}_{q^2}$ denote the finite field with $q^2$ elements, ${\bar{\mathbb{F}}}_{q^2}$ its algebraic closure and let $p$ be the characteristic of $\mathbb{F}_{q^2}$. The Fermat function field of degree $m$, $\mathcal{F}_m$, is defined as $\mathcal{F}_m:={\bar{\mathbb{F}}}_{q^2}(X,Y)$ with 
$$X^m+Y^m+1=0.$$
Note that we have used $\mathcal{F}_m$ with a slight abuse of notation. Typically in this setting, the constant field of the Fermat function field is taken to be $\mathbb{F}_{q^2}$, but since we aim to study the Weierstrass semigroup of all places, it is convenient for us to work over an algebraically closed constant field. Likewise with a slight abuse of notation, we will call a place of $\mathcal{F}_m$ rational, if it can be identified with a rational place of $\mathbb{F}_{q^2}(X,Y)$.

It is easy to see that both function field extensions $\mathcal{F}_m/\overline{\mathbb{F}}_{q^2}(X)$ and $\mathcal{F}_m/\overline{\mathbb{F}}_{q^2}(Y)$ are Kummer extensions of degree $m$ with exactly $m$ (totally) ramified places, namely the zeroes of $X^m+1$ and $Y^m+1$ respectively (that is $(X=\alpha_i)$ and $(Y=\alpha_i)$ with $\alpha_i^m+1=0$ for $i=1,\ldots,m$, respectively). For $\alpha_i \in \mathbb{F}_{q^2}$, $\alpha_i^m+1=0$, We denote with $P_{(0,\alpha_i)}$ (resp. $P_{(\alpha_i,0)}$) the place above $(Y=\alpha_i)$ in $\mathcal{F}_m/\overline{\mathbb{F}}_{q^2}(Y)$ (resp. $(X=\alpha_i)$ in $\mathcal{F}_m/\overline{\mathbb{F}}_{q^2}(X)$). In general we will denote with $P_{(a,b)}$, $a,b \in \overline{\mathbb{F}}_{q^2}$ the place above both $(X=a)$ and $(Y=b)$ in the two function field extensions mentioned above. Denoting finally with $P_{\infty}^i$ for $i=1,\ldots,m$ the common poles of $X$ and $Y$ one has
\begin{equation}
\label{div:x}
(X)_{\mathcal{F}_m}= \sum_{i=1}^{m} P_{(0,\alpha_i)}  - \sum_{i = 1}^{m}P_{\infty}^{i},
\end{equation}
\begin{equation}
\label{div:y}
(Y)_{\mathcal{F}_m}=\sum_{i=1}^{m} P_{(\alpha_i,0)}  - \sum_{i = 1}^{m}P_{\infty}^{i},
\end{equation}
\begin{equation}
\label{div:xalphai}
     (X-\alpha_i)_{\mathcal{F}_m}= mP_{(\alpha_i,0)}  - \sum_{j = 1}^{m}P_{\infty}^{j},
     \end{equation}
     and
\begin{equation*}
%\label{div:yalphai}
        (Y-\alpha_i)_{\mathcal{F}_m}= mP_{(0,\alpha_i)}  - \sum_{j = 1}^{m}P_{\infty}^{j}.
\end{equation*}
Furthermore \cite{Sti}*{Corollary 3.7.4} implies that $g(\mathcal{F}_m)=(m-1)(m-2)/2$.

More generally, let $\zeta_m\in\F_{q^2}$ be an $m$-th root of unity, then if we have $a,b \in \overline{\mathbb{F}}_{q^2} \setminus \{\alpha_i: \ i=1,\ldots,m\}$, and $a^m + b^m +1 = 0$, we also have the divisors
    \begin{align*}
        (X-a)_{\mathcal{F}_m} & = \sum_{i = 1}^{m} P_{(a,\zeta_m^i b)} - \sum_{i = 1}^{m}P_{\infty}^{i}, \text{ and}\\
        (Y-b)_{\mathcal{F}_m} & = \sum_{i = 1}^{m} P_{(\zeta_m^i a, b)} - \sum_{i = 1}^{m}P_{\infty}^{i}. 
    \end{align*}
The function field $\mathcal{F}_m$, seen as a function field over $\mathbb{F}_{q^2}$ is $\mathbb{F}_{q^2}$-maximal by Serre's covering result \cite{KS}. In fact, it can be seen as a subfield of the Hermitian function field $\mathcal{F}_{q+1}:=\mathbb{F}_{q^2}(U,V)$ with $U^{q+1}+V^{q+1}+1=0$, by setting $U^{(q+1)/m}=X$ and $V^{(q+1)/m}=Y$. 
In this extension only the places in $\mathcal{O}:=\{P_{(\alpha_i,0)}, P_{(0,\alpha_i)}, P_\infty^i \mid i = 1,\ldots, m\}$ have an interesting ramification behaviour, meaning that all the other places in $\mathcal{F}_m$ are unramified.

This gives rise to the following interesting property that we will use intensively later on in the paper. Namely we will use the fact that when looking at $X-a$ and $Y-b$ for $a^m,b^m\neq -1$, and denoting with $Q=Q_{(A,B)}$ a place of $\mathcal{F}_{q+1}$ above $P=P_{(a,b)}$, then since $P$ is unramified, we can view $X-a$ and $Y-b$ as functions in $\mathcal{F}_{q+1}$ instead of $\mathcal{F}_m$. 

The following two diagrams summarize the distribution of places in the function fields mentioned so far. Here we use the letter $Q$ to denote the places of the function field $\mathcal{F}_{q+1}$, and the letter $\bar{P}$ for the places in the intermediate function field $\mathbb{F}_{q^2}(X,V)$, in a similar way as we used the letter $P$ for the places in $\mathcal{F}_m$. 

\begin{center}
    \begin{tikzpicture}
        \node (Q) at (-1,-2) {$\mathcal{F}_{q+1} = \bar{\F}_{q^2}(U,V)$};
        \node (I) at (-1,-4) {$\bar{\F}_{q^2}(X, V)$};
        \node (M) at (-1,-6) {$\mathcal{F}_m = \bar{\F}_{q^2}(X,Y)$};
        \draw (Q) -- (I) node[midway,right=8mm] {$(q+1)/m$};
        \draw (I) -- (M) node[midway,right=8mm] {$(q+1)/m$};

        \node (Q1) at (2.5,-2) {$Q_{(\xi \alpha_i,0)}$};
        \node (Q2) at (3.5,-2) {$\cdots$};
        \node (Q3) at (5,-2) {$Q_{(\xi^{(q+1)/m} \alpha_i,0)}$};

        \node (Q4) at (8,-2) {$Q_{(0,\xi \alpha_i)}$};
        \node (Q5) at (9,-2) {$\cdots$};
        \node (Q6) at (10.5,-2) {$Q_{(0,\xi^{(q+1)/m} \alpha_i)}$};

        \node (Q7) at (13.5,-2) {$Q_\infty^i$};

        \node (I1) at (3.5,-4) {$\bar{P}_{(\alpha_i,0)}$};

        \node (I2) at (8,-4) {$\bar{P}_{(0,\xi \alpha_i)}$};
        \node (I3) at (9,-4) {$\cdots$};
        \node (I4) at (10.5,-4) {$\bar{P}_{(0,\xi^{(q+1)/m} \alpha_i)}$};

        \node (I5) at (13.5,-4) {$\bar{P}_\infty^i$};

        \node (M1) at (3.5,-6) {$P_{(\alpha_i,0)}$};

        \node (M2) at (9,-6) {$P_{(0,\alpha_i)}$};

        \node (M3) at (13.5,-6) {$P_\infty^i$};

        \draw (Q1) -- (I1);
        \draw (Q2) -- (I1);
        \draw (Q3) -- (I1);

        \draw (Q4) -- (I2);
        \draw (Q5) -- (I3);
        \draw (Q6) -- (I4);

        \draw (I1) -- (M1);

        \draw (I2) -- (M2);
        \draw (I3) -- (M2);
        \draw (I4) -- (M2);

        \draw (Q7) -- (I5);
        \draw (I5) -- (M3);
    \end{tikzpicture}
\end{center}

We note also that in the above diagrams $\xi$ denotes a fixed primitive $(q+1)/m$-th root of unity, $\zeta_m$ is a primitive $m$-th root of unity and $a$ represents any element of $\bar{\mathbb{F}}_{q^2}$ such that $a^m+1 \ne 0$. 

\begin{center}
    \begin{tikzpicture}
        \node (B) at (-1,-2) {$\mathcal{F}_m$};
        \node (C) at (-1,-4) {$\bar{\F}_{q^2}(X)$};
        \draw (B) -- (C) node[midway,right=8mm] {$m$};
        
        \node (H) at (3,-2) {$P_{(\alpha_i,0)}$};
        \node (I) at (3,-4) {$(X=\alpha_i)$};
        \draw (H) -- (I);
        
        \node (E1) at (5,-2) {$P_\infty^1$};
        \node (E2) at (6,-2) {$\cdots$};
        \node (E3) at (7,-2) {$P_\infty^m$};
        \node (F) at (6,-4) {$(X=\infty)$};

        \draw (E1) -- (F);
        \draw (E2) -- (F);
        \draw (E3) -- (F);

        \node (K1) at (9,-2) {$P_{(a,\zeta_m^1 b)}$};
        \node (K2) at (10,-2) {$\cdots$};
        \node (K3) at (11,-2) {$P_{(a,\zeta_m^m b)}$};
        \node (L) at (10,-4) {$(X=a)$};
        
        \draw (K1) -- (L);
        \draw (K2) -- (L);
        \draw (K3) -- (L);
    \end{tikzpicture}
\end{center}

The automorphism group of $\mathcal{F}_m$ has been computed in \cite{Le}. We recall its structure in the following lemma.

\begin{lemma}
\label{lemma:aut_F_m}
    Let $m>3$ be a proper divisor of $q+1$, and let $\zeta_m$ be a primitive $m$-th root of unity. Define
    \begin{align*}
        A_m := \{A_{a,b} \ : \ (X,Y) \mapsto (\zeta_m^a X, \zeta_m^b Y) \mid a,b = 0,\ldots, m-1\} \text{  and  } H_m := \langle S, T \rangle,
    \end{align*}
    where 
    \begin{align*}
        S \ : \ (X,Y) \mapsto \left( \frac{Y}{X}, \frac{1}{X}\right), \text{ and } T \ : \ (X,Y) \mapsto \left( \frac{1}{X}, \frac{Y}{X}\right).
    \end{align*}
    Then we have that $\text{Aut}(\mathcal{F}_m) =G_m$ where $G_m$ denotes the group $G_m:=\langle A_m, H_m\rangle$. In particular $\text{Aut}(\mathcal{F}_m)$ is isomorphic to a semidirect product of an abelian group of order $m^2$ (direct product of two cyclic groups of order $m$) and a symmetric group on $3$ letters.
    Furthermore, we have the relations 
    \begin{align*}
        S^3 = T^2 = 1,\ T^{-1}ST = S^{-1},\ S^{-1}A_{a,b}S = A_{b-a,-a} \text{ and } T^{-1}A_{a,b}T = A_{-a,b-a}.
    \end{align*}
\end{lemma}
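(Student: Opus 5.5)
The lemma splits into a routine part and a hard part. The routine part is that $G_m$ consists of automorphisms, has the stated relations, and has order $6m^2$. The hard part is that $G_m$ is all of $\text{Aut}(\mathcal{F}_m)$; for this I would reduce to the known computation of Leopoldt \cite{Le}.

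\textbf{Step 1: the generators are automorphisms.} For $A_{a,b}$ this is immediate, since $(\zeta_m^aX)^m+(\zeta_m^bY)^m+1=X^m+Y^m+1$. For $S$, divide the defining equation by $X^m$:
\[
1+(Y/X)^m+(1/X)^m=0.
\]
So $(Y/X,1/X)$ again satisfies the Fermat equation. The induced map on $\mathcal{F}_m$ is invertible because its inverse is given by $X\mapsto 1/Y$, $Y\mapsto X/Y$. The map $T$ is handled the same way.

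\textbf{Step 2: the relations.} I would check these directly on the generators $X,Y$, using the convention that automorphisms act on functions.
\begin{itemize}
\item $S^3=1$: we have $S^2(X)=S(Y)/S(X)=(1/X)/(Y/X)=1/Y$ and $S^2(Y)=1/S(X)=X/Y$. Applying $S$ once more returns $(X,Y)$.
\item $T^2=1$: this is immediate from the formula for $T$.
\item $T^{-1}ST=S^{-1}$: compute both sides on $X$ and $Y$.
\item $S^{-1}A_{a,b}S=A_{b-a,-a}$: compute on generators, using $S(X)=Y/X\mapsto \zeta_m^{b-a}Y/X$, and similarly for $S(Y)$. The relation for $T^{-1}A_{a,b}T$ is checked the same way.
\end{itemize}

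\textbf{Step 3: the structure of $G_m$.} The relations show that $A_m\cong C_m\times C_m$ is normalized by $H_m=\langle S,T\rangle$. Since $S$ has order $3$, $T$ has order $2$, and $T$ inverts $S$, we get $H_m\cong S_3$. To get $A_m\cap H_m=1$, I would look at the action on the three points at infinity of the plane model. The group $H_m$ permutes the three lines $X=0$, $Y=0$ and $Z=0$ faithfully as $S_3$. Each $A_{a,b}$ fixes all three lines. Any element of $H_m$ lying in $A_m$ therefore acts trivially on the lines, so it is the identity of $H_m$. Hence $G_m=A_m\rtimes H_m$ and $|G_m|=6m^2$.

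\textbf{Step 4: $\text{Aut}(\mathcal{F}_m)=G_m$.} This is the main obstacle, and I would cite \cite{Le} for it. A self-contained route would go as follows.
\begin{itemize}
\item For $m>3$ and $p\nmid m$, the curve is a smooth plane curve of degree $m\ge 4$.
\item The $g^2_m$ cut out by lines is then the unique such linear system on the curve.
\item Consequently every automorphism of $\mathcal{F}_m$ is induced by a projective linear map preserving $X^m+Y^m+Z^m$.
\item One then shows that such a map is monomial, i.e. that it permutes the coordinate lines up to scalars. This is done by examining the $3m$ total inflection points, namely the points where $XYZ=0$ on the curve. Each of these has tangent meeting the curve with multiplicity $m$.
\end{itemize}
The delicate part of this route is the positive characteristic case where $m\mid q+1$. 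There the curve is Frobenius nonclassical, and for the Hermitian case $m=q+1$ the group is much larger. This is why the lemma requires $m$ to be a \emph{proper} divisor of $q+1$, and that is exactly the case in which I would invoke \cite{Le} rather than redo the argument.
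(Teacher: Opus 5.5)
Your plan follows the paper's route: the paper gives no argument of its own for this lemma and simply recalls it from Leopoldt \cite{Le}. Your Steps 1--3 correctly supply the elementary part. I checked that $S^3=T^2=1$, $T^{-1}ST=S^{-1}$, $S^{-1}A_{a,b}S=A_{b-a,-a}$ and $T^{-1}A_{a,b}T=A_{-a,b-a}$ all hold with your convention $(\sigma\tau)(f)=\sigma(\tau(f))$. The argument via the three coordinate lines (equivalently the three $m$-element blocks of $\mathcal{O}$; you wrote ``points at infinity'' but mean lines) does give $A_m\cap H_m=1$ and $|G_m|=6m^2$.

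However, the justification at the end of Step 4 is wrong, and it points to a genuine defect in the statement itself. Requiring $m$ to be a \emph{proper} divisor of $q+1$ does not exclude the Hermitian case. If $q=p^{3h}$ and $m=p^h+1$, then $m$ properly divides $q+1=(p^h+1)(p^{2h}-p^h+1)$. Yet $X^m+Y^m+1=0$ is then the Hermitian curve over $\F_{p^{2h}}$, whose automorphism group is $PGU(3,p^h)$. For example, $q=27$, $m=4$ gives $|\mathrm{Aut}(\mathcal{F}_4)|=|PGU(3,3)|=6048\neq 96=6m^2$.

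Leopoldt's theorem has exactly the exception ``$m-1$ a power of $p$''. So citing \cite{Le} yields $\mathrm{Aut}(\mathcal{F}_m)=G_m$ only under that extra hypothesis, which has to be added to the lemma. This does no harm to the paper. For $m=(q+1)/2$ or $(q+1)/3$ with $m>3$, $m-1$ is never a power of $p$. Moreover, the paper only ever uses the inclusion $G_m\subseteq\mathrm{Aut}(\mathcal{F}_m)$, which your Steps 1--2 prove unconditionally.

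Relatedly, what breaks your self-contained flex route is ordinary nonclassicality with respect to lines, not Frobenius nonclassicality. For odd $p$, this happens exactly when $p\mid m-1$. For the two values of $m$ studied in the paper, $v_P(\tilde f_0)=2$ at every $P\notin\mathcal{O}$, since $\binom{(q+1)/m}{2}\not\equiv 0\pmod p$. Hence the flexes are exactly the $3m$ points of $\mathcal{O}$. The coordinate lines are then the only lines containing at least $4$ flexes, so your linear-algebra argument closes in those cases without \cite{Le}.
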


Since $\mathcal{F}_m$ is $\mathbb{F}_{q^2}$-maximal the so-called Fundamental Equation \cite[Page xvii (ii)]{HKT} applies to $\mathcal{F}_m$. We summarize this property in the following theorem.

\begin{theorem}
\label{fundamentalequation}    
Let $P,Q$ be places of $\mathcal{F}_m$ and assume that $Q$ is $\mathbb{F}_{q^2}$-rational. Denote with $\Phi(P)$ the $\mathbb{F}_{q^2}$-Frobenius image of the place $P$. Then there exists a function $f_{P,Q} \in \mathcal{F}_m$ such that
$$(f_{P,Q})_{\mathcal{F}_m}=qP+\Phi(P)-(q+1)Q.$$
In particular if $P$ is also $\mathbb{F}_{q^2}$-rational then
$$(f_{P,Q})_{\mathcal{F}_m}=(q+1)P-(q+1)Q.$$
\end{theorem}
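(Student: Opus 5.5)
The plan is to prove this through the Jacobian, using the standard fact that for an $\mathbb{F}_{q^2}$-maximal function field the Frobenius endomorphism $\Phi$ of the Jacobian satisfies $\Phi+q=0$. Fix the $\mathbb{F}_{q^2}$-rational place $Q$ and consider the degree-zero divisor class $[P-Q]$. The goal is to show that $q[P-Q]+[\Phi(P)-Q]=0$ in $\mathrm{Pic}^0(\mathcal{F}_m)$. Once that holds, the divisor $qP+\Phi(P)-(q+1)Q$ is principal, so it equals $(f_{P,Q})_{\mathcal{F}_m}$ for some $f_{P,Q}$. If $P$ is itself rational, then $\Phi(P)=P$ and the divisor becomes $(q+1)P-(q+1)Q$, which is the second claim.

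The key steps run as follows. First, recall that the $L$-polynomial of $\mathcal{F}_m$ over $\mathbb{F}_{q^2}$ is $L(t)=(1+qt)^{2g}$. This follows from maximality: every reciprocal root $\omega_i$ satisfies $|\omega_i|=q$, and $N=q^2+1-\sum\omega_i$ reaching $q^2+1+2qg$ forces $\omega_i=-q$ for all $i$.

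Second, apply Tate's theorem, or more elementarily the fact that the Frobenius endomorphism $\pi$ of the Jacobian $J$ acts semisimply on $\ell$-adic Tate modules with characteristic polynomial $(T+q)^{2g}$. Together these give $\pi=-q$ as an endomorphism of $J$. To avoid invoking full semisimplicity, one can cite that $J$ is isogenous to $E^g$ with $E$ a maximal supersingular elliptic curve on which $\pi_E=-q$. Alternatively, one can argue directly that the minimal polynomial of $\pi$ divides $T+q$, because $\pi$ acts on $V_\ell(J)$ semisimply by Weil/Tate.

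Third, translate $\pi=-q$ to divisors. The Frobenius acting on $J(\overline{\mathbb{F}}_{q^2})$ sends the class $[D]$ to $[\Phi(D)]$. So $[\Phi(P)-\Phi(Q)]=-q[P-Q]$, and since $\Phi(Q)=Q$ this is exactly the needed relation.

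I expect the main obstacle to be justifying that $\pi+q=0$ holds as an endomorphism, and not merely that the characteristic polynomial is $(T+q)^{2g}$. This needs the semisimplicity of Frobenius on $V_\ell$, which is Tate's theorem over finite fields. Since the paper treats this as the known Fundamental Equation, I would simply cite it (HKT, or Ihara/Ruck–Stichtenoth) for this step.

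A more self-contained alternative uses the covering $\mathcal{F}_{q+1}\to\mathcal{F}_m$. On the Hermitian function field one checks the relation explicitly with the tangent-line function: for $Q'$ above $Q$, the tangent line at $P'$ divided by the line $Z$-coordinate has divisor $qP'+\Phi(P')-(q+1)Q_0$. The relation would then be pushed down via the norm. However, the norm produces sums over conjugates, so this route is messier, and I would prefer the Jacobian argument.
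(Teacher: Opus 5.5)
Your argument is correct, and it is the standard proof of the Fundamental Equation that the paper cites. The paper gives no argument of its own beyond noting that $\mathcal{F}_m$ is $\mathbb{F}_{q^2}$-maximal and citing HKT, where the relation is derived as you do: from $L(t)=(1+qt)^{2g}$, semisimplicity of Frobenius (Weil/Tate) giving $\pi+q=0$ on the Jacobian, and Frobenius-equivariance of $P\mapsto [P-Q]$ for the rational base place $Q$. (As an aside, the Hermitian route you set aside is not messy, since the norm of a single place of $\mathcal{F}_{q+1}$ is just the place below it. The only care needed is that for $m=(q+1)/3$ a rational $Q$ need not lie under a rational place of $\mathcal{F}_{q+1}$; you fix this by first proving the relation for $Q=P_{(\alpha_1,0)}$ and then applying it with $P=Q$.)
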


\subsection{Weierstrass semigroups and numerical semigroups.}

%We recall that for a numerical semigroup $G \subseteq \mathbb{N}$, the genus of $G$ is $g(G) := |\mathbb{N} \backslash G|$. 
%
%An important class of numerical semigroups are the telescopic semigroups, of which we recall the definition in the following \cite{HLP}. Let $(a_1, \ldots , a_k)$ be a sequence of positive integers with greatest common divisor equal to $1$. Define
%
%$$d_i := gcd(a_1,\ldots, a_i)$$
%and 
%$$A_i :=\bigg\{ \frac{a_1}{d_i},\ldots, %\frac{a_i}{d_i} \bigg\},$$
%for $i = 1,\ldots , k$. Let $d_0 := 0$ and $G_i$ be the semigroup generated by $A_i$. If $a_i/d_i \in G_{i-1}$ for all $i = 2,\ldots , k$ then the sequence $(a_1, \ldots, a_k)$ is called telescopic. A numerical semigroup is called telescopic if generated by a telescopic sequence.
%
%From \cite{HLP}*{Proposition 5.35} the genus of a telescopic semigroup $S$ generated by a telescopic sequence $(a_1, \ldots, a_k)$ is
%
%\begin{equation}
% \label{gen:tel}   
%g(S) =\frac{1}{2}\bigg( 1+ \sum_{i=1}^k\bigg( %\frac{d_{i-1}}{d_i}-1\bigg)a_i \bigg).
%\end{equation}

Given a rational place $P$ of function field $\mathcal{F}$, the Weierstrass semigroup $H(P)$ is the set of (non-negative) integers $k$ such that there exists a function in $\mathcal{F}$ with pole divisor $kP$. The set $G(P) = \mathbb{N} \backslash H(P)$ is called the set of gaps for $P$, and The Weierstrass Gap Theorem states that there are exactly $g$ such gaps, where $g$ is the genus of the function field $\mathcal{F}$. Now assume that the constant field of $\mathcal{F}$ is algebraically closed. Then it is known that for all but a finite number of places of the function field (called the Weierstrass places), the Weierstrass semigroup is the same. 

Clearly determining the Weierstrass semigroup $H(P)$ at a place $P$ of $\mathcal{F}$ is equivalent to determining the set of gaps $G(P)$. To compute the elements of $G(P)$ the following lemma will be used, see \cite{VS}*{Corollary 14.2.5}.

\begin{lemma}
\label{lemma:holomorphic_diffs}
An integer $n \in \mathbb{N}$ is a gap at a place $P$ if and only if there exists a holomorphic differential $\omega$ in $\mathcal{F}$ such that $v_P(\omega) = n - 1$.
\end{lemma}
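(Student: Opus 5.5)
The plan is to derive the statement from Riemann--Roch together with the Weierstrass gap theorem. Here $\mathcal{F}$ has algebraically closed constant field and genus $g$, and $W$ is a canonical divisor, for instance $W=(\eta)$ for any nonzero differential $\eta$. For $n\ge 1$ I compare the dimensions $\ell((n-1)P)$ and $\ell(nP)$. Since $\ell(nP)-\ell((n-1)P)\in\{0,1\}$, the integer $n$ is a gap exactly when $\ell(nP)=\ell((n-1)P)$. Indeed, $n\in H(P)$ if and only if some function has pole divisor exactly $nP$, which happens if and only if $\mathcal{L}(nP)\supsetneq\mathcal{L}((n-1)P)$.

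Next I apply Riemann--Roch to both divisors:
\[
\ell(nP)-\ell((n-1)P) = 1 + \ell(W-nP) - \ell(W-(n-1)P).
\]
Hence $n$ is a gap if and only if $\ell(W-(n-1)P)-\ell(W-nP)=1$, that is, if and only if $\Omega(-(n-1)P)\supsetneq \Omega(-nP)$. Here $\Omega(D)$ denotes the space of differentials $\omega$ with $\omega=0$ or $(\omega)\ge D$. To make this identification I use the standard isomorphism $\mathcal{L}(W-D)\to\Omega(D)$, $f\mapsto f\eta$ \cite{Sti}*{Theorem 1.5.14}.

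The final step translates the strict inclusion into the condition on $v_P$. A differential lies in $\Omega(-(n-1)P)$ precisely when it is holomorphic with $v_P(\omega)\ge n-1$. It lies in $\Omega(-nP)$ precisely when, in addition, $v_P(\omega)\ge n$. So the strict inclusion holds if and only if there is a holomorphic $\omega$ with $v_P(\omega)=n-1$ exactly. This gives both directions of the claimed equivalence at once.

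The only delicate point is bookkeeping: the constant field must be algebraically closed, so that $P$ has degree one and the degree-one jumps make sense. The case $n=0$ does not arise, since $0\in H(P)$ and gaps are positive. I do not expect any real obstacle. The key step is the dimension identity from Riemann--Roch, and everything else is unwinding definitions.
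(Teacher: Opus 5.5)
Your argument is correct. It is the standard Riemann--Roch derivation: it compares $\ell(nP)$ with $\ell((n-1)P)$, dualizes to canonical divisors, and reads off the jump as a holomorphic differential with exact order $n-1$ at $P$. The paper gives no proof of this lemma; it quotes it from \cite{VS}*{Corollary 14.2.5}. So your proposal supplies a self-contained justification of a fact the paper takes from the literature, and nothing in it conflicts with how the lemma is used later.

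There is one sign slip to fix. Under your own convention $\Omega(D)=\{\omega : \omega=0 \text{ or } (\omega)\ge D\}$, with the isomorphism $\mathcal{L}(W-D)\to\Omega(D)$, $f\mapsto f\eta$, the space matching $\mathcal{L}(W-(n-1)P)$ is $\Omega((n-1)P)$, not $\Omega(-(n-1)P)$. The latter would allow poles at $P$ rather than force a zero of order at least $n-1$. Your final paragraph in fact describes $\Omega((n-1)P)$ and $\Omega(nP)$, namely holomorphic differentials with $v_P(\omega)\ge n-1$, respectively $\ge n$. So the logic goes through once the minus signs are removed.

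Two cosmetic points:
\begin{itemize}
\item The Weierstrass gap theorem is never actually used. You only need Riemann--Roch and $\deg P=1$.
\item For $n=0$ the equivalence does not need to be excluded; it holds trivially, since $0\in H(P)$ and no holomorphic differential has $v_P(\omega)=-1$.
\end{itemize}
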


It is not difficult to construct holomorphic differentials on the function field $\mathcal{F}_m$. An example, that will be useful later is $w:=dY/X^{m-1}$. This differential is holomorphic and has divisor 

\begin{equation}
 \label{div:w}   
(w)=(m-3) \sum_{i=1}^m P_{\infty}^i.
\end{equation}
To prove this, it is enough to consider the Kummer extension $\mathcal{F}_m / \overline{\mathbb{F}}_{q^2}(Y)$ of degree $m$
and using \cite{Sti}*{Corollary 3.4.7} to obtain 

$$
    (dY) = -2(Y)_\infty + \text{Diff}(\mathcal{F}_m/ \overline{\mathbb{F}}_{q^2}(Y))= -2 \sum_{i=1}^m P_{\infty}^i+ \sum_{i=1}^{m}(m-1)P_{(0,\alpha_i)}.
$$
 Finally combining the above with Equation \eqref{div:x}, gives Equation \eqref{div:w}. 

\section{Weierstrass semigroups in $\mathcal{F}_m$, $m \mid (q+1)$}\label{sec:three}

This section is divided into two parts, namely one where we compute the Weierstrass semigroups for every place $P\in\mathcal{O}$ and any $m \mid (q+1)$, and then a second part where we analyze the Weierstrass semigroup at places $P\not\in\mathcal{O}$ in the special cases where $m=\frac{q+1}{2}$ and $m = \frac{q+1}{3}$.

Before diving into it, we note that $g(\mathcal{F}_2)=0$ and $g(\mathcal{F}_3)=1$. Since the Weierstrass semigroups in these cases are trivially known, we will always assume without loss of generality that $m\geq 4$. 

\subsection{Weierstrass semigroup $H(P)$ for $P\in \mathcal{O}$}

In this section, we will find the Weierstrass semigroup for $P\in\mathcal{O}:=\{P_{(\alpha_i,0)}, P_{(0,\alpha_i)}, P_\infty^i \mid i = 1,\ldots, m\}$. 

We start by observing that $\mathcal{O}$ is in fact the $\text{Aut}(\mathcal{F}_m)$-orbit of $P_{(\alpha_1,0)}$. This observation is crucial because it implies that $H(P)=H(P_{(\alpha_1,0)})$ for all $P\in\mathcal{O}$, and hence that it is enough to compute $H(P_{(\alpha_1,0)})$.% to know the behaviour of Weierstrass semigroups in $\mathcal{O}$.

\begin{remark}
The fact that $\mathcal{O}$ is the $\text{Aut}(\mathcal{F}_m)$-orbit of $P_{(\alpha_1,0)}$ can be proven by using Lemma \ref{lemma:aut_F_m}. In fact using Lemma~\ref{lemma:aut_F_m} one can easily see that the $A_m$-orbit of $P_{(\alpha_1,0)}$ is
$\{P_{(\zeta_m^a \alpha_1,0)} \mid a = 0,\ldots, m-1\}=\{P_{(\alpha_j,0)} \mid j = 1,\ldots, m\}$, while the $A_m$-orbit of $P_{(0,\alpha_1)}$ is analogously $\{P_{(0,\alpha_j)} \mid a = 0,\ldots, m-1\}$. Since $S(P_{(\alpha_1,0)})=P_{(0,\alpha_1)}$ this already shows that $H(P_{(\alpha_j,0)})=H(P_{(0,\alpha_k)})$ for all $j,k=1,\ldots,m$. 
 Moreover, $S$ also maps the set of poles $\{P_{\infty}^i \mid i = 1,\ldots, m\}$ to $\{P_{(0,\alpha_i)} \mid i = 1,\ldots, m\}$, implying that in fact 
    \begin{align*}
        H(P_{(0,\alpha_i)}) = H(P_{(\alpha_j,0)}) = H(P_\infty^k) \text{ for all } i,j,k = 1,\ldots, m.
    \end{align*}
\end{remark}

We are then ready for the first theorem. Note that this theorem is not new, since it for example also follows directly from the results given in \cite{boseck}. However, for the sake of completeness we state it and give a short proof.
\begin{theorem}
    Let $m$ be a divisor of $q+1$. Then for a place $P\in\mathcal{O}$ we have the Weierstrass semigroup
    \begin{align*}
        H(P) = \langle m-1,m \rangle.
    \end{align*}
\end{theorem}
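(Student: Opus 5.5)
By the preceding Remark it suffices to treat a single place of $\mathcal{O}$. The most convenient choice is $P=P_\infty^1$ or $P=P_{(0,\alpha_1)}$, because the differential $w=dY/X^{m-1}$ has its zeros exactly at the poles $P_\infty^i$. I will work at $P=P_{(\alpha_1,0)}$ for the pole constructions and transport information via the automorphisms when convenient. The plan has two steps. First, exhibit functions with pole divisor $(m-1)P$ and $mP$, which gives $\langle m-1,m\rangle\subseteq H(P)$. Second, count gaps: the semigroup $\langle m-1,m\rangle$ has genus $(m-2)(m-1)/2=g(\mathcal{F}_m)$, so by the Weierstrass Gap Theorem the inclusion must be an equality.

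For the pole orders, use the divisors \eqref{div:x}, \eqref{div:y} and \eqref{div:xalphai}. Since $(X-\alpha_1)=mP_{(\alpha_1,0)}-\sum_j P_\infty^j$, the function $f_1:=1/(X-\alpha_1)$ has pole divisor exactly $mP$, so $m\in H(P)$. For $m-1$, consider $f_2:=Y/(X-\alpha_1)$. Its divisor is
\[
\sum_{i=1}^m P_{(\alpha_i,0)}-\sum_i P_\infty^i-mP_{(\alpha_1,0)}+\sum_j P_\infty^j
=\sum_{i\neq 1}P_{(\alpha_i,0)}-(m-1)P_{(\alpha_1,0)}.
\]
Hence $(f_2)_\infty=(m-1)P$ and $m-1\in H(P)$. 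As a cross-check, this matches Theorem~\ref{fundamentalequation} in spirit only; it is not needed.

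It remains to verify that $\langle m-1,m\rangle$ has exactly $(m-1)(m-2)/2$ gaps. This is the standard formula $(a-1)(b-1)/2$ for a semigroup generated by two coprime integers $a,b$, applied with $a=m-1$, $b=m$. Since $H(P)\supseteq\langle m-1,m\rangle$, we get $|G(P)|\le (m-1)(m-2)/2=g$, and $|G(P)|=g$ forces equality.

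An alternative route is through Lemma~\ref{lemma:holomorphic_diffs}, using the differentials $X^aY^b\,w$ with $a+b\le m-3$ at $P_\infty^1$. I do not expect to need it. The only step requiring care is the divisor computation for $f_2$, specifically that the zeros of $Y$ at $P_{(\alpha_i,0)}$ are simple and that the poles at infinity cancel. Both follow directly from \eqref{div:y} and \eqref{div:xalphai}, so I expect no real obstacle. The hypothesis $m\ge 4$ is not needed for the argument, though it was assumed earlier.
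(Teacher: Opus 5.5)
Your proposal is correct and is essentially the paper's own proof: the functions $1/(X-\alpha_1)$ and $Y/(X-\alpha_1)$ give $\langle m-1,m\rangle\subseteq H(P_{(\alpha_1,0)})$, and equality follows because $\langle m-1,m\rangle$ has $(m-1)(m-2)/2=g(\mathcal{F}_m)$ gaps. Your reduction to a single place via the automorphism-orbit Remark is also exactly how the paper proceeds.
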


\begin{proof}
 Using Equations \eqref{div:y} and \eqref{div:xalphai} we see that the function $\frac{Y}{X-\alpha_1}$ has principal divisor $\sum_{i=2}^{m} P_{(\alpha_i,0)} - (m-1)P_{(\alpha_1,0)}$ and $\frac{1}{X-\alpha_1}$ has principal divisor $\sum_{i = 1}^{m} P_{\infty}^i - mP_{(\alpha_1,0)}$. This already shows that $\langle m-1,m \rangle \subseteq H(P_{(\alpha_1,0)})$. It is easily seen that the semigroup $\langle m-1,m \rangle$ has exactly $g(\mathcal{F}_m)$ many gaps using for example Proposition 5.11 from \cite{handbook}. Hence $\langle m-1,m \rangle = H(P_{(\alpha_1,0)})$.
\end{proof}

\subsection{Weierstrass semigroup $H(P)$ for $P\in\mathbb{P}_{\mathcal{F}_m}\setminus \mathcal{O}$}\label{sec:P_not_in_O}
In this section we consider the case $P\in\mathbb{P}_{\mathcal{F}_m}\setminus \mathcal{O}$ for $m=(q+1)/2$ and $m=(q+1)/3$. Instead of directly finding the Weierstrass semigroup $H(P)$, we will determine the gap sequences $G(P)$ using Lemma \ref{lemma:holomorphic_diffs}.

Using the same notation as in the previous section, we can write the place $P$ as $P:=P_{(a,b)}\in\mathbb{P}_{\mathcal{F}_m}\setminus \mathcal{O}$ with $a^m+b^m+1=0$ and $a\cdot b\neq 0$. Note that $b^m \neq -1$ and $a^m \neq -1$ in this setting. 
To ease notation, we also define the divisor $D_\infty := \sum_{i=1}^m P_\infty^i$. 

Let $Q:=Q_{(A,B)}$ be a place of $\mathcal{F}_{q+1}$ above $P$, that is $A^{q+1}+B^{q+1}+1=0$, $A^{(q+1)/m}=a$, $B^{(q+1)/m}=b$, and recall that $P$ does not ramify in $\mathcal{F}_{q+1}/\mathcal{F}_m$. The functions $\tilde{T} = U-A$ and $T=\frac{U-A}{A}$ are two local parameters at $Q=Q_{(A,B)}$. As in the proof of Proposition 2.12 in \cite{vicino}, a direct computation, using the equation $U^{q+1}+V^{q+1}+1=0$, gives that
    \begin{align*}%\label{eq:V-B}
        V-B = -\frac{A^q}{B^q} \tilde{T} + O(\tilde{T}^q),
    \end{align*}
    and 
    \begin{align*}%\label{eq:(V-B)/B}
        \frac{V-B}{B} = -\frac{A^{q+1}}{B^{q+1}} T + O(T^q).
    \end{align*}
Here the big-$O$ notation $O(\tilde{T}^\ell)$ (resp. $O(T^\ell)$) is used to indicate terms in the power series development of a function in $\tilde{T}$ (resp. $T$) of valuation at least $\ell$. 

We define the function $F_P:= f_{P,P_{(\alpha_1,0)}}\cdot (X-\alpha_1)^{(q+1)/m}$, where $f_{P,P_{(\alpha_1,0)}}$ is the function from Theorem \ref{fundamentalequation}. Then using Theorem \ref{fundamentalequation} and Equation \eqref{div:xalphai} we see that $F_P$ has divisor 
    \begin{equation}
    \label{div:Fpnr}
        (F_P) = qP + \Phi(P) - \frac{q+1}{m} D_\infty.
    \end{equation}
In particular if $P$ is $\F_{q^2}$-rational, we have 
\begin{equation}
    \label{div:Fp}
        (F_P) = (q+1) P - \frac{q+1}{m} D_\infty.
\end{equation}

As mentioned, the goal is to create a family of holomorphic differentials with specific valuation, and for that we need some special functions. Therefore define 
\begin{equation}\label{def:f0tilde}
    \tilde{f}_0 := a^{m-1}(X-a) + b^{m-1}(X-b).
\end{equation}
Our first goal is to compute the power series expansions of $X-a$, $Y-b$ and $\tilde{f}_0$ with respect to the local parameter $\tilde{T}$ defined in the previous section. 

When seeing $X-a$ and $Y-b$ as functions in $\mathcal{F}_{q+1}$ instead of $\mathcal{F}_m$, we can compute the power series with respect to the local parameter $\tilde{T}=U-A$ at $Q$ as follows:
\begin{align*}
    X-a &= U^{(q+1)/m} - A^{(q+1)/m} = (U - A + A)^{(q+1)/m} - A^{(q+1)/m} \\
    & = \sum_{j=0}^{(q+1)/m}{(q+1)/m \choose j} (U-A)^j A^{(q+1)/m-j} - A^{(q+1)/m} \\
    & = \sum_{j=1}^{(q+1)/m}{(q+1)/m \choose j} (U-A)^j A^{(q+1)/m-j}.
\end{align*}
Using that $a=A^{(q+1)/m}$ then gives us the expression
\begin{align}\label{eq:pow_ser_exp_x_a}
    \frac{X-a}{a} & %= \sum_{j=1}^{(q+1)/m}{(q+1)/m \choose j} (U-A)^j A^{(q+1)/m-j-(q+1)/m} 
    = \sum_{j=1}^{(q+1)/m}{(q+1)/m \choose j} \tilde{T}^j A^{-j}. 
\end{align}
In a completely similar manner, we obtain
\begin{align*}
    Y-b = \sum_{j=1}^{(q+1)/m}{(q+1)/m \choose j} (V-B)^j B^{(q+1)/m-j},
\end{align*}
and
\begin{align*}
    \frac{Y-b}{b} = \sum_{j=1}^{(q+1)/m}{(q+1)/m \choose j} (V-B)^j B^{-j}.
\end{align*}
Using that $V-B = -\frac{A^q}{B^q} \tilde{T} + O(\tilde{T}^q)$, we get
\begin{align}\label{eq:pow_ser_exp_y_b}
    \frac{Y-b}{b} = %\sum_{j=1}^{(q+1)/m}{(q+1)/m \choose j} \left(-\frac{A^q}{B^q} \tilde{T} + O(\tilde{T}^q)\right)^j B^{-j} = 
    \sum_{j=1}^{(q+1)/m}{(q+1)/m \choose j} \left(-\frac{A^q}{B^q} \tilde{T}\right)^j B^{-j}+O(\tilde{T}^q).
\end{align}
Now, this allows us to obtain an expression for $\tilde{f}_0$, namely 
\begin{align*}
    \tilde{f_0} 
    & = a^{m-1}(X-a) + b^{m-1}(Y-b) 
     = a^m \frac{X-a}{a} + b^m \frac{Y-b}{b}\\
%    & = A^{q+1} \left(\sum_{j=1}^{(q+1)/m}{(q+1)/m \choose j} \tilde{T}^j A^{-j}  \right) + B^{q+1} \left(\sum_{j=1}^{(q+1)/m}{(q+1)/m \choose j} \left(-\frac{A^q}{B^q} \tilde{T} + O(\tilde{T}^q)\right)^j B^{-j} \right) \\
    & = \sum_{j=1}^{(q+1)/m}{(q+1)/m \choose j} \left(A^{q+1-j} + B^{q+1-j} \left(-\frac{A^q}{B^q} \right)^j \right) \tilde{T}^j + O(\tilde{T}^q)\\
    & = \sum_{j=1}^{(q+1)/m}{(q+1)/m \choose j} \left(A^{q+1-j} + (-1)^j\frac{A^{qj}}{B^{(j-1)(q+1)}}  \right) \tilde{T}^j + O(\tilde{T}^q).
\end{align*}
Note that the coefficient for $\tilde{T}$ in the above expansion is 
\begin{align*}
    {(q+1)/m \choose 1} \left(A^{q+1-1} -A^{q}  \right) = 0,
\end{align*}
implying that $v_P(\tilde{f}_0) \geq 2$.  In fact, using $A^{q+1}+B^{q+1}+1=0$, we see that the coefficient for $\tilde{T}^2$ is
\begin{align*}
    {(q+1)/m \choose 2} \left(A^{q-1} + \frac{A^{2q}}{B^{q+1}} \right) 
    &= {(q+1)/m \choose 2} \left(\frac{A^{q-1}B^{q+1} + A^{2q}}{B^{q+1}} \right) \\
 %   &= {(q+1)/m \choose 2}\left(\frac{A^{q-1}(-1-A^{q+1}) + A^{2q}}{B^{(q+1)}} \right) \\
    &= -{(q+1)/m \choose 2}\frac{A^{q-1}}{B^{q+1}},
\end{align*}
This coefficient is non-zero, since the assumption $a \cdot b \neq 0$ implies that $A\cdot B \neq 0$. This proves that $v_P(\tilde{f}_0) = 2$, and 
\begin{equation}
\label{power:f0}
    \tilde{f_0} 
    =   -{(q+1)/m \choose 2}\frac{A^{q-1}}{B^{q+1}}\tilde{T}^2 + \sum_{j=3}^{(q+1)/m}{(q+1)/m \choose j} \left(A^{q+1-j} + (-1)^j\frac{A^{qj}}{B^{(j-1)(q+1)}}  \right) \tilde{T}^j + O(\tilde{T}^q). 
\end{equation}

At this point, we will treat the case $m=(q+1)/2$ and $m=(q+1)/3$ in two separate subsections.

\subsubsection{The case $m=(q+1)/2$}

In this section we will restrict our considerations to the case $m=\frac{q+1}{2}$. In this specific case, to find $H(P)$, it will be enough to use the functions constructed so far in Section 3 in combination with the differential $w$ from Equation \eqref{div:w} (to construct holomorphic differentials). 
\begin{theorem}\label{thm:gaps_m2}
    Let $P\in\mathbb{P}_{\mathcal{F}_m}\backslash \mathcal{O}$ be an $\F_{q^2}$-rational place, with $m=\frac{q+1}{2}$.
    Then
    \begin{align*}
        G(P) = \{i + 2j + k(q+1) + 1 \mid i,j,k \in\mathbb{Z}_{\geq 0}, \  m-3-i-j-k(q+1)/m\geq 0\}.
    \end{align*}
\end{theorem}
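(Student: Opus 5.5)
The plan is to use Lemma~\ref{lemma:holomorphic_diffs}: for each triple $(i,j,k)$ in the index set I will exhibit a holomorphic differential $\omega$ with $v_P(\omega)=i+2j+k(q+1)$. I will then show that these values are pairwise distinct and that there are exactly $g=(m-1)(m-2)/2$ of them. Since a place has exactly $g$ gaps, this forces equality.

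\textbf{Construction.} Since $P$ is $\F_{q^2}$-rational, Equation~\eqref{div:Fp} gives $(F_P)=(q+1)P-2D_\infty$, because $(q+1)/m=2$. I will also use $X-a$ and $\tilde f_0$ (with $Y-b$ in the second summand). Because $m\mid q+1$ and $q+1=2m$, a fixed $P=P_{(a,b)}$ is the only zero of $X-a$ among the places $P_{(a,\zeta_m^\ell b)}$ that we track, and $v_P(X-a)=1$ by \eqref{eq:pow_ser_exp_x_a}, since the coefficient of $\tilde T$ is $2A^{-1}a\neq 0$. Moreover $v_P(\tilde f_0)=2$ by \eqref{power:f0}, and both functions have pole divisor $D_\infty$. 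Define
\[
\omega_{i,j,k}:=(X-a)^i\,\tilde f_0^{\,j}\,F_P^{\,k}\,w .
\]
Using \eqref{div:w}, its divisor is at least
\[
(i+2j+k(q+1))P+\bigl(m-3-i-j-2k\bigr)D_\infty
\]
plus other effective terms. So $\omega_{i,j,k}$ is holomorphic exactly when $m-3-i-j-k(q+1)/m\ge 0$, which is the condition in the statement. Its valuation at $P$ is exactly $i+2j+k(q+1)$, because $v_P(w)=0$ for $P\notin\mathcal O$. By Lemma~\ref{lemma:holomorphic_diffs}, every $i+2j+k(q+1)+1$ is therefore a gap.

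\textbf{Counting.} The constraint gives $0\le i+j+2k\le m-3$, so $k\le (m-3)/2$ and $i+2j\le 2(m-3-2k)$. On the other hand $q+1=2m>2(m-3)$, so the values $n=i+2j+k(q+1)$ for different $k$ lie in disjoint intervals $[k(q+1),\,k(q+1)+2(m-3-2k)]$. For fixed $k$ and $s=m-3-2k$, the numbers $i+2j$ with $i+j\le s$ cover every integer in $[0,2s]$, giving $2s+1$ distinct values. Hence the number of distinct gaps is
\[
\sum_{k\ge 0,\ 2k\le m-3}\bigl(2(m-3-2k)+1\bigr).
\]
I will evaluate this sum separately for $m$ odd and $m$ even and check that it equals $(m-1)(m-2)/2$. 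For $m$ odd, with $s=2t$ running over $t=0,\dots,(m-3)/2$, the sum is $\sum(4t+1)=(m-1)(m-2)/2$. The even case is a similar arithmetic sum.

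\textbf{Main obstacle.} The hardest part is the counting step: the index set has many coincident values $i+2j$, so it is not in bijection with the triples. I must carefully describe the set as a union of full integer intervals, one for each $k$, and verify that the total matches the genus in both parity cases. If the count falls short in one parity, I would need an extra differential. The natural first candidate would be replacing $\tilde f_0$ by a higher-order combination, but I expect the interval count above to work out exactly.
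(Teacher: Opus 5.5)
Your proposal is correct and follows the paper's proof. You use the same holomorphic differentials $(X-a)^i\tilde f_0^{\,j}F_P^{\,k}w$ to produce the gaps, and conclude by showing the resulting set has $g(\mathcal{F}_m)$ elements. Your count groups the values by $k$ into the disjoint full intervals $[k(q+1),\,k(q+1)+2(m-3-2k)]$, which is a tidier organization than the paper's restriction to $i\in\{0,1\}$ with floor sums, and it makes the no-repetition claim explicit. Two harmless slips: only the ``if'' direction of your holomorphy criterion is needed, and $X-a$ actually has $m$ simple zeros $P_{(a,\zeta_m^\ell b)}$, not just $P$.
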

\begin{proof}
    If we can create a holomorphic differential with valuation $i + 2j + k(q+1)$, then Lemma~\ref{lemma:holomorphic_diffs} gives us that $i + 2j + k(q+1) + 1$ is a gap.
    Therefore, let $w$ be the differential considered in Equation \eqref{div:w}, and let $i,j,k$ be non-negative integers. We define the function
    \begin{align*}
        h_{i,j,k} := (X-a)^i \tilde{f_0}^j F_P^k.
    \end{align*}
   Combining Equations \eqref{div:w}, \eqref{power:f0} and \eqref{div:Fp} shows that the differential $h_{i,j,k}w$ has divisor:
    \begin{align*}
        (h_{i,j,k}w) &= i (X-a) + j (\tilde{f}_0) + k (F_P) + (w)\\
        &= (i + 2j + k(q+1))P + E_{i,j,k} + \left(m-3 - i - j - k\frac{q+1}{m}\right)D_\infty,
    \end{align*}
    where $E_{i,j,k}$ is a positive divisor collecting the zeros other than $P$ of the functions $X-a$ and $\tilde{f}_0$. 
    We see that if $m-3 - i - j - k\frac{q+1}{m} \geq 0$, then $(h_{i,j,k}w)$ is holomorphic and $i + 2j + k(q+1) + 1$ is a gap at $P$.
    This shows that the set 
    $$\mathcal{T}:=\left\{i + 2j + k(q+1) + 1 \mid m-3 - i - j - k\frac{q+1}{m} \geq 0 \right\}$$
    is contained in $G(P)$ and we can deduce that equality holds by proving that $|\mathcal{T}|=g(\mathcal{F}_m)$. Note that the assumption $m=(q+1)/2$ implies that $g(\mathcal{F}_m)= (q-1)(q-3)/8$. 
    
    Now note that 
    \begin{align*}
        \mathcal{T} = \left\{i + 2j + k(q+1) + 1 \mid i=0,1;\ j = 0,\ldots, \frac{q-5}{2}-i;\ m-3 - i - j - k\frac{q+1}{m} \geq 0 \right\},
    \end{align*} 
    and that with the posed restrictions on $i$, $j$ and $k$ no repetitions occur in the corresponding gaps for $P$. Hence    
    $|\mathcal{T}|=|S|$, where
    \begin{align*}
        S := \left\{(i,j,k) \mid i=0,1,\ j = 0,\ldots \frac{q-5}{2}-i,\ k = 0,\ldots, \left\lfloor \frac{q-5-2i-2j}{4} \right\rfloor \right\}.
    \end{align*}
    Hence it is enough to show that $|S|=g(\mathcal{F}_m)$. One has

    \begin{align} \label{formulaS} 
        |S| &= \sum_{i=0}^1 \sum_{j=0}^{\frac{q-5}{2}-i} \left( \left\lfloor \frac{q-5-2j-2i}{4} \right\rfloor + 1\right)\notag\\
        &=\sum_{i=0}^1 \sum_{j=0}^{\frac{q-5}{2}-i} \left\lfloor \frac{q-5-2j-2i}{4} \right\rfloor +q-4.
    \end{align}
 
For given $i=0,1$ let $N:=\frac{q-5}{2}-i$. We start by computing the inner sum
    \begin{align*}
\sum_{j=0}^{\frac{q-5}{2}-i} \left\lfloor \frac{q-5-2j-2i}{4} \right\rfloor &=\sum_{j=0}^{N} \left\lfloor \frac{2N-2j}{4} \right\rfloor=\sum_{j=0}^{N} \left\lfloor \frac{N-j}{2} \right\rfloor\\ &=\sum_{k=0}^{N} \left\lfloor \frac{k}{2} \right\rfloor=\sum_{s=0}^{\lfloor \frac{N}{2}\rfloor}s+ \sum_{s=0}^{\lfloor\frac{N-1}{2}\rfloor} s\\ &=\begin{cases} \frac{N^2}{4} , & \text{if} \ N \equiv 0 \pmod 2, \\ \frac{N-1}{2}\bigg(\frac{N-1}{2}+1 \bigg), & \text{if} \ N \equiv 1 \pmod 2.\end{cases}
   \end{align*}

Using the formula above and noting that if $q \equiv 3 \pmod 4$ (resp. $q \equiv 1 \pmod 4$) then $N=\frac{q-5}{2}-i$ is even for $i=0$ and odd for $i=1$ (resp. even for $i=1$ and odd for $i=0$), Equation \eqref{formulaS} gives

\begin{align*}
|S|&=
\begin{cases}  \frac{(q-5)^2}{4} + \frac{\frac{q-5}{2}-2}{2}\bigg(  \frac{\frac{q-5}{2}-2}{2}+1\bigg)+q-4, & \text{if} \ q \equiv 3 \pmod 4, \vspace{3mm}\\ \frac{\frac{q-5}{2}-1}{2}\bigg(  \frac{\frac{q-5}{2}-1}{2}+1\bigg)+\frac{\left( \frac{q-5}{2}-1\right)^2}{4}+q-4, & \text{if} \ q \equiv 1 \pmod 4.\end{cases} \\
&=\frac{(q-1)(q-3)}{8}=g(\mathcal{F}_m).
  \end{align*}

\end{proof}
Now, for a non-rational place $P$, the situation looks quite similar, as the following corollary shows. 
\begin{corollary}
    Let $P\in \mathbb{P}_{\mathcal{F}_m}\backslash\mathcal{O}$ be a non $\F_{q^2}$-rational place, and $m=\frac{q+1}{2}$. 
    Then
$$
        G(P) = \{i + 2j + kq + 1 \mid i,j,k \in \mathbb{Z}_{\geq 0}, m-3-i-j-k(q+1)/m\geq 0\}. 
$$

\end{corollary}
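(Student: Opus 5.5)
The plan is to repeat the proof of Theorem~\ref{thm:gaps_m2} verbatim, replacing the divisor \eqref{div:Fp} by the divisor \eqref{div:Fpnr} of $F_P$ for a non-rational place. For non-negative integers $i,j,k$ we again put $h_{i,j,k} := (X-a)^i \tilde{f}_0^j F_P^k$ and consider the differential $h_{i,j,k}w$, with $w$ as in \eqref{div:w}. Since $P$ is not $\F_{q^2}$-rational, we have $\Phi(P)\neq P$, so by \eqref{div:Fpnr} the function $F_P$ has valuation exactly $q$ at $P$. The point $\Phi(P)$ only contributes to the effective part of the divisor. Using \eqref{power:f0} and $v_P(X-a)=1$, we obtain
\begin{align*}
(h_{i,j,k}w) = (i+2j+kq)P + E'_{i,j,k} + \left(m-3-i-j-k\tfrac{q+1}{m}\right)D_\infty,
\end{align*}
where $E'_{i,j,k}\geq 0$ collects the other zeros of $X-a$ and $\tilde f_0$, together with $k\Phi(P)$. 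Here we use that $P$, and hence $\Phi(P)$, lies outside $\mathcal{O}$, so none of these zeros lies in the support of $D_\infty$. Whenever $m-3-i-j-k(q+1)/m\ge 0$ the differential is holomorphic, and Lemma~\ref{lemma:holomorphic_diffs} shows that $i+2j+kq+1$ is a gap at $P$. Thus the proposed set $\mathcal{T}'$ is contained in $G(P)$.

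To get equality it suffices to show that $|\mathcal{T}'| = g(\mathcal{F}_m)$. As before, we may restrict to $i\in\{0,1\}$: a triple with $i\ge 2$ can be replaced by $(i-2,j+1,k)$, which gives the same value of $i+2j+kq$ and only weakens the holomorphy constraint. The index set is then exactly the set $S$ from the proof of Theorem~\ref{thm:gaps_m2}, and $|S|=g(\mathcal{F}_m)$ was already computed there. So the only remaining work is to show that the map $(i,j,k)\mapsto i+2j+kq$ is injective on $S$.

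This injectivity is the one real point of difference from the rational case, and I expect it to be the main obstacle, because the period is now $q$ (odd) rather than $q+1$ (even), so the parity trick no longer separates the values. On $S$ we have $k\le (q-5)/4$ and $i+2j\le q-5$, since $j\le (q-5)/2-i$. Suppose $i+2j+kq = i'+2j'+k'q$ with $k<k'$. Then $(k'-k)q = (i+2j)-(i'+2j') \le q-5 < q$, which is impossible. Hence $k=k'$ and $i+2j=i'+2j'$. Parity then forces $i=i'$, and so $j=j'$. Therefore $|\mathcal{T}'|=|S|=g(\mathcal{F}_m)$, and $\mathcal{T}'=G(P)$.
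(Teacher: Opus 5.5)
Your proposal is correct and follows the paper's proof: you use the same differentials $h_{i,j,k}w$ with $v_P(F_P)=q$ from \eqref{div:Fpnr} and reduce to the index set $S$. You then check injectivity explicitly, using $i+2j\le q-5<q$ to fix $k$ and then parity to fix $i$ and $j$; the paper only says this can be done by hand. One harmless overstatement: $\tilde f_0$ is the tangent line at $P$ and need not have a pole at every $P_\infty^k$, so the claim that its other zeros avoid the support of $D_\infty$ can fail. This only raises the valuation of $h_{i,j,k}w$ at $P_\infty^k$, so holomorphy is unaffected.
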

\begin{proof}
    The proof is exactly analogous to that of Theorem~\ref{thm:gaps_m2}. This time the function $h_{i,j,k} := (X-a)^i \tilde{f_0}^j F_P^k$ instead has valuation $i + 2j + kq$, since $v_P(F_P)=q$ for $P$ non-rational by Equation \eqref{div:Fpnr}. 
    Hence the set of constructed gaps using this functions this time is $\mathcal{T}':= \{i + 2j + kq + 1 \mid i,j,k \in\mathbb{Z}_{\geq 0}, m-3-i-j-k(q+1)/m\geq 0\}$. If we can argue that $|S|=|\mathcal{T}'|$ for the set $S$ defined in the Theorem~\ref{thm:gaps_m2}, then we are done. 
    
    Since by definition $S = \left\{(i,j,k) \mid i=0,1,\ j = 0,\ldots \frac{q-5}{2}-i,\ k = 0,\ldots, \left\lfloor \frac{q-5-2i-2j}{4} \right\rfloor \right\}$, to prove that  $|S|=|\mathcal{T}'|$ it is sufficient to show that restricting our parameters to the ranges $i=0,1,\ j = 0,\ldots \frac{q-5}{2}-i,\ k = 0,\ldots, \left\lfloor \frac{q-5-2i-2j}{4} \right\rfloor$, gives acceptable parameters according to the restrictions in $\mathcal{T}'$ and that no repetitions in $\mathcal{T}'$ occur for different choices of such parameters. 
    This can directly be shown by hand assuming by contradiction that different choices of the parameters $(i,j,k)$ give equal values $i + 2j + kq + 1$.
\end{proof}

Combining all the results in this subsection we now have $H(P)$ for all places $P$ of $\mathcal{F}_m$, with $m=(q+1)/2$.
In the next section we will try to do something similar for $m=\frac{q+1}{3}$, obtaining however a very different and more complicated behaviour of the semigroups $H(P)$.

\subsubsection{The case $m=(q+1)/3$} 

In this section we will determine $G(P)$ in the case $m=\frac{q+1}{3}$, with  $P:=P_{(a,b)}\in\mathbb{P}_{\mathcal{F}_m}\setminus \mathcal{O}$ with $a^m+b^m+1=0$, $a \cdot b\neq 0$. As before, this implies that $b^m \neq -1$ and $a^m \neq -1$. The overall strategy will again be to construct holomorphic differentials with a certain valuation and use Lemma \ref{lemma:holomorphic_diffs}. 
This time however, we will need a lot more functions than before. For this, we will use techniques similar to those in \cite{vicino}*{Section 3}, and several results from the same paper. In the following we will list the most relevant results from \cite{vicino} for our purposes, omitting the corresponding proofs.

Let
\begin{align*}
    \alpha(P_{(a,b)}) := \frac{a^m}{1+a^m}.
\end{align*}
For the rest of this section we will simply write $\alpha = \alpha(P_{(a,b)})$ for simplicity. 
The next definition is a special case of~\cite[Definition 3.1]{vicino}. 
Since we are working over $\mathbb{F}_{q^2}$ and $m=\frac{q+1}{3}$, we know that there exists a primitive cube root of unity in $\mathbb{F}_{q^2}$, which we will denote by $\zeta_3$.
Then for every $i\in\mathbb{Z}$ we define the functions
\begin{align*}
    \mathcal{P}_i(s):=\frac{(s+\zeta_3)^{3i}-(s+\zeta_3^2)^{3i}}{3(\zeta_3-\zeta_3^2)s(s-1)},
\end{align*}
and
\begin{align*}
    \mathcal{Q}_i(s) := \frac{\left(\frac{1-\zeta_3}{3}\right)(s+\zeta_3)^{3i-1} + \left(\frac{1-\zeta_3^2}{3}\right)(s+\zeta_3^2)^{3i-1} }{s-1}.
\end{align*}
In \cite[Example 3.2]{vicino}, we have an explicit description of these functions for small values of $i$:

$$
    \pc_0(s) = 0, \ \pc_1(s) = 1, \ \pc_2(s) = 2s^3-3s^2-3s+2, %\ \pc_3(s) = 3s^6 - 9s^5-9s^4+33s^3-9s^2-9s+3,
$$
and
$$
    \qc_0(s) = (s^2-s+1)^{-1}, \ \qc_1(s) = s+1, \ \qc_2(s) = s^4+s^3-9s^2+s+1.$$
    %$$\qc_3(s) = s^7+s^6-27s^5+29s^4+29s^3-27s^2+s+1.$$
The following lemmas from \cite{vicino} summarize some important properties of the expressions $\pc_i(s)$ and $\qc_i(s)$.
\begin{lemma}[{\cite[Lemma 3.3]{vicino}}]
    If $i$ is a positive integer, then $\pc_i(s)$ is a nonzero polynomial in $s$ of degree at most $3i-3$, and $\qc_i(s)$ is a nonzero polynomial in $s$ of degree $3i-2$.
\end{lemma}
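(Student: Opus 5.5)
The plan is to show that $\pc_i$ is a polynomial by showing that its numerator vanishes at the roots of its denominator, and then to read off the degree bound. For $\qc_i$ I would write it as an explicit polynomial and compute its top coefficient directly. Throughout I use $\zeta_3^2+\zeta_3+1=0$, so $\zeta_3-\zeta_3^2\neq 0$ and $3\neq 0$ in the field (as $p\nmid 3$ because $3\mid q+1$).

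\emph{The polynomial $\pc_i$.} Put $N(s):=(s+\zeta_3)^{3i}-(s+\zeta_3^2)^{3i}$.
\begin{itemize}
\item At $s=0$ we get $N(0)=\zeta_3^{3i}-\zeta_3^{6i}=1-1=0$.
\item At $s=1$ we have $1+\zeta_3=-\zeta_3^2$ and $1+\zeta_3^2=-\zeta_3$. Hence $N(1)=\zeta_3^{6i}-\zeta_3^{3i}=0$, using that $3i$ is even only through the cube: $(-\zeta_3^2)^{3i}=(-1)^{3i}$ and $(-\zeta_3)^{3i}=(-1)^{3i}$.
\end{itemize}
So $s(s-1)\mid N(s)$, and $\pc_i$ is a polynomial.

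For the degree, expand $N(s)=\sum_k \binom{3i}{k}s^{3i-k}(\zeta_3^k-\zeta_3^{2k})$. The $k=0$ term cancels, so $\deg N\le 3i-1$ and $\deg\pc_i\le 3i-3$.

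Nonvanishing is the subtle point, since binomial coefficients may vanish in characteristic $p$. I would argue $N\neq 0$ as follows: $N=0$ would force $(s+\zeta_3)^{3i}=(s+\zeta_3^2)^{3i}$. Evaluating at $s=-\zeta_3$ gives $0=(\zeta_3^2-\zeta_3)^{3i}\neq 0$, a contradiction.

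\emph{The polynomial $\qc_i$.} Let $M(s)$ be its numerator. Then
\[
M(1)=\tfrac{1-\zeta_3}{3}(-\zeta_3^2)^{3i-1}+\tfrac{1-\zeta_3^2}{3}(-\zeta_3)^{3i-1}=\tfrac{(-1)^{3i-1}}{3}\bigl((1-\zeta_3)\zeta_3+(1-\zeta_3^2)\zeta_3^2\bigr),
\]
using $\zeta_3^{2(3i-1)}=\zeta_3$ and $\zeta_3^{3i-1}=\zeta_3^2$. The bracket equals $\zeta_3-\zeta_3^2+\zeta_3^2-\zeta_3=0$. Hence $(s-1)\mid M$ and $\qc_i$ is a polynomial.

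The leading coefficient of $M$ in degree $3i-1$ is $\frac{(1-\zeta_3)+(1-\zeta_3^2)}{3}=\frac{2+1}{3}=1\neq 0$. Therefore $\deg M=3i-1$, so $\deg\qc_i=3i-2$ exactly, and in particular $\qc_i\neq 0$.

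\emph{Main obstacle.} The only delicate step is the positive-characteristic nonvanishing of $\pc_i$. The evaluation argument above handles it without any reference to binomial coefficients. One could also check consistency with the listed values $\pc_1=1$ and $\qc_1=s+1$.
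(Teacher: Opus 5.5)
Your proof is correct and complete. The paper gives no proof of this lemma to compare with: it imports the statement from \cite{vicino}, explicitly omitting proofs. Your argument is therefore a self-contained justification rather than an alternative to one in the text.

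One small slip. In the computation of $N(1)$, the intermediate expression $\zeta_3^{6i}-\zeta_3^{3i}$ drops the common factor $(-1)^{3i}$, and the phrase about $3i$ being ``even only through the cube'' does not mean anything. What you actually use, and what suffices, is
$N(1)=(-\zeta_3^2)^{3i}-(-\zeta_3)^{3i}=(-1)^{3i}-(-1)^{3i}=0$.

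For the $\pc_i$ part there is a shorter route. One checks directly that $(s+\zeta_3)^3-(s+\zeta_3^2)^3=3(\zeta_3-\zeta_3^2)s(s-1)$. So with $u=(s+\zeta_3)^3$ and $v=(s+\zeta_3^2)^3$ the denominator is exactly $u-v$, and
$\pc_i=(u^i-v^i)/(u-v)=\sum_{k=0}^{i-1}u^kv^{i-1-k}$.
This gives the following at once:
\begin{itemize}
\item $\pc_i$ is a polynomial with $\deg\pc_i\le 3i-3$;
\item its coefficient of $s^{3i-3}$ is $i$, so the degree is exactly $3i-3$ precisely when $p\nmid i$, which explains why the lemma only says ``at most'';
\item your evaluation at $s=-\zeta_3$ still gives nonvanishing, since $\pc_i(-\zeta_3)=(\zeta_3^2-\zeta_3)^{3(i-1)}\neq 0$.
\end{itemize}
This form also shows why $uv=(s^2-s+1)^3$ appears naturally in relations among the $\pc_i$.

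Your root-checking approach is equally valid and treats $\pc_i$ and $\qc_i$ uniformly. Your treatment of $\qc_i$, via $M(1)=0$ and the leading coefficient $\tfrac{(1-\zeta_3)+(1-\zeta_3^2)}{3}=1$, is exactly right. It even shows $\qc_i$ is monic, consistent with the listed $\qc_1$ and $\qc_2$.
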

%\begin{lemma}[{\cite[Lemma 3.5]{vicino}}]\label{lemma:3.5}
%    For any three integers $i,j,\ell\in \mathbb{Z}$, we have the following relations 
%    \begin{enumerate}
%        \item[(i)] $\pc_i(s)\pc_{\ell+j}(s) - \pc_j(s)\pc_{\ell+i}(s) = (s^2-s+1)^{3j}\pc_{i-j}(s)\pc_{\ell}(s)$,\label{eq:(3.6)}
%        \item[(ii)] $\pc_i(s)\qc_{\ell+j}(s)-\pc_j(s)\qc_{\ell+i}(s) = (s^2-s+1)^{3j}\pc_{i-j}(s)\qc_\ell(s).$\label{eq:(3.7)}
%    \end{enumerate}
%\end{lemma}
\begin{lemma}[{\cite[Remark 3.8]{vicino}}]\label{lemma:PQ_no_common_zeros}
    If $i\in\mathbb{Z}_{>0}$, then $\pc_i(s)$ and $\qc_i(s)$ have no common roots.
\end{lemma}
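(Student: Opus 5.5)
The plan is to argue directly from the closed-form definitions of $\pc_i(s)$ and $\qc_i(s)$. We work over a field of characteristic $p\neq 3$ (since $3\mid q+1$), so that $3$ and $\zeta_3-\zeta_3^2$ are nonzero. Write $\zeta=\zeta_3$, and set $u=s+\zeta$, $v=s+\zeta^2$, so that $u-v=\zeta-\zeta^2\neq 0$. Let
\[
N(s)=u^{3i}-v^{3i}, \qquad M(s)=\tfrac{1-\zeta}{3}u^{3i-1}+\tfrac{1-\zeta^2}{3}v^{3i-1}.
\]
Then $\pc_i=N/(3(\zeta-\zeta^2)s(s-1))$ and $\qc_i=M/(s-1)$; by the lemma from \cite{vicino} quoted above, both quotients are polynomials. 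Take a root $s_0$ of both. I split into the cases $s_0\notin\{0,1\}$, $s_0=0$ and $s_0=1$.

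\emph{Generic case $s_0\neq 0,1$.} Here $N(s_0)=M(s_0)=0$. Since $u-v\neq 0$, the two numbers $u(s_0)$ and $v(s_0)$ cannot both vanish. Then $u^{3i}=v^{3i}$ forces both to be nonzero, and the ratio $r=u/v$ satisfies $r^{3i}=1$. Substituting $r^{3i-1}=r^{-1}$ into $M(s_0)=0$ gives
\[
r=-\frac{1-\zeta}{1-\zeta^2}=-\frac{1}{1+\zeta}=\zeta^{-2}=\zeta,
\]
using $1+\zeta=-\zeta^2$. Hence $s_0+\zeta=\zeta s_0+1$, so $(1-\zeta)s_0=1-\zeta$ and $s_0=1$, a contradiction.

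\emph{Case $s_0=0$.} Evaluate $\qc_i(0)=-M(0)$ directly, using $\zeta^{3i-1}=\zeta^2$ and $\zeta^{6i-2}=\zeta$:
\[
\qc_i(0)=-\tfrac13\bigl[(1-\zeta)\zeta^2+(1-\zeta^2)\zeta\bigr]=-\tfrac13(\zeta^2+\zeta-2)=1\neq 0.
\]
So $0$ is never a common root.

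\emph{Case $s_0=1$.} This is the only delicate case, because both numerators vanish at $s=1$ (note $1+\zeta=-\zeta^2$ and $1+\zeta^2=-\zeta$). Here I compute the values through derivatives. If $N=(s-1)R$, then $R(1)=N'(1)$, and similarly for $M$. Using $(-\zeta^2)^{3i-1}=(-1)^{i-1}\zeta$ and $(-\zeta)^{3i-1}=(-1)^{i-1}\zeta^2$, one gets
\[
\pc_i(1)=\frac{3i\,(-1)^{i-1}(\zeta-\zeta^2)}{3(\zeta-\zeta^2)}=(-1)^{i-1}\,i .
\]
In the same way, using $(-\zeta^2)^{3i-2}=(-1)^i\zeta^2$ and $(-\zeta)^{3i-2}=(-1)^i\zeta$,
\[
\qc_i(1)=M'(1)=\tfrac{3i-1}{3}(-1)^i\bigl[(1-\zeta)\zeta^2+(1-\zeta^2)\zeta\bigr]=(-1)^{i+1}(3i-1).
\]

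The main obstacle is that $p$ may divide $i$, in which case $\pc_i(1)=0$ and $1$ is indeed a root of $\pc_i$. However, $p$ cannot divide both $i$ and $3i-1$, since it would then divide $1$. So at least one of $\pc_i(1)$, $\qc_i(1)$ is nonzero, and $1$ is never a common root. Together with the previous two cases, this shows that $\pc_i(s)$ and $\qc_i(s)$ have no common root.
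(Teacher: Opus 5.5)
Your proof is correct; I checked each evaluation. In the generic case, $u/v=\zeta_3$ forces $s_0=1$. The special values $\qc_i(0)=1$, $\pc_i(1)=(-1)^{i-1}i$ and $\qc_i(1)=(-1)^{i+1}(3i-1)$ are right, and they agree with $\pc_2=2s^3-3s^2-3s+2$ and $\qc_2=s^4+s^3-9s^2+s+1$.

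The paper gives no proof of its own here; it imports the statement from \cite[Remark 3.8]{vicino}. In that setting the natural derivation (presumably the one intended) goes through the three-term relations $\pc_i\qc_{\ell+j}-\pc_j\qc_{\ell+i}=(s^2-s+1)^{3j}\pc_{i-j}\qc_\ell$. Specialising to $(i,j,\ell)\mapsto(i+1,i,0)$ and using $\pc_1=1$, $\qc_0=(s^2-s+1)^{-1}$ gives
\[
\pc_{i+1}(s)\qc_i(s)-\pc_i(s)\qc_{i+1}(s)=(s^2-s+1)^{3i-1}.
\]
This can also be checked directly from the closed forms, using $u^3-v^3=3(\zeta_3-\zeta_3^2)s(s-1)$ and $\tfrac{1-\zeta_3^2}{3}u+\tfrac{1-\zeta_3}{3}v=s-1$. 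It follows that any common root lies in $\{-\zeta_3,-\zeta_3^2\}$. At those points exactly one of $u,v$ vanishes, so $N\neq 0$, while $s(s-1)\neq 0$; hence $\pc_i\neq 0$.

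That route is uniform in the characteristic and never has to examine $s=1$, but it presupposes the identity. Yours needs no auxiliary identity and works purely from the closed forms. The price is the separate analysis at $s=1$, where both numerators vanish and the characteristic genuinely matters, since $\pc_i(1)=0$ whenever $p\mid i$. You handle this cleanly with $\gcd(i,3i-1)=1$.

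One small streamlining: your case $s_0=0$ is superfluous. The generic argument uses only $N(s_0)=M(s_0)=0$, which holds at every common root, and it already forces $s_0=1$.
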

\begin{lemma}[{\cite[Remark 3.9 and Definition 3.10]{vicino}}]
    For any $s\in\bar{\F}_{q^2}\backslash\{0,1,-\zeta_3,-\zeta_3^2\}$, there exists an $i\in\mathbb{Z}_{>0}$ such that $\pc_{i+1}(s)=0$. % over $\bar{\F}_{q^2}$.
    The smallest such $i$ is called the $\pc$-order of $s$.
\end{lemma}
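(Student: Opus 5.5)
The plan is to read $\pc_i(s)$ directly off its defining formula and reduce the claim to the fact that every nonzero element of $\bar{\F}_{q^2}$ is a root of unity. First I will check that the formula makes sense. Since $m=(q+1)/3$ is an integer, we have $q\equiv 2 \pmod 3$, so $p\neq 3$. Hence $3\neq 0$, and $\zeta_3\neq\zeta_3^2$, so the constant $3(\zeta_3-\zeta_3^2)$ is nonzero. By the earlier lemma (\cite[Lemma 3.3]{vicino}), $\pc_i(s)$ is a polynomial for $i>0$. It agrees with the rational expression
\begin{align*}
\frac{(s+\zeta_3)^{3i}-(s+\zeta_3^2)^{3i}}{3(\zeta_3-\zeta_3^2)s(s-1)}
\end{align*}
wherever the denominator does not vanish. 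For $s\notin\{0,1\}$, therefore, $\pc_i(s)=0$ holds if and only if $(s+\zeta_3)^{3i}=(s+\zeta_3^2)^{3i}$.

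Next, fix $s\in\bar{\F}_{q^2}\setminus\{0,1,-\zeta_3,-\zeta_3^2\}$ and set
\begin{align*}
u:=\frac{s+\zeta_3}{s+\zeta_3^2}.
\end{align*}
This is well defined and nonzero because $s\neq-\zeta_3^2$ and $s\neq -\zeta_3$. For such $s$, the condition $(s+\zeta_3)^{3i}=(s+\zeta_3^2)^{3i}$ is equivalent to $u^{3i}=1$. The element $u$ is algebraic over $\F_p$, so it lies in some finite field $\F_{p^r}$. Hence $u\in\F_{p^r}^*$ has a finite multiplicative order $n\geq 1$.

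To conclude, I take $i+1:=2n$, so that $i=2n-1\geq 1$ is a positive integer. Then $u^{3(i+1)}=(u^n)^{6}=1$, so the numerator of $\pc_{i+1}(s)$ vanishes while the denominator $3(\zeta_3-\zeta_3^2)s(s-1)$ does not. This gives $\pc_{i+1}(s)=0$. The set of positive $i$ with $\pc_{i+1}(s)=0$ is therefore nonempty, and by well-ordering it has a smallest element, which makes the $\pc$-order well defined.

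The only points needing care are the nonvanishing of the constants (that is, $p\neq 3$) and the requirement $i\geq 1$ rather than $i\geq 0$. Taking a multiple of the order handles the latter, which matters because $\pc_1=1$ never vanishes. I do not expect a genuine obstacle beyond this bookkeeping.
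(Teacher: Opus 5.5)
Your proof is correct. For $s\notin\{0,1\}$, and using $p\neq 3$ (which follows from $3\mid q+1$), the vanishing of $\pc_{i+1}(s)$ is equivalent to $u^{3(i+1)}=1$ with $u=(s+\zeta_3)/(s+\zeta_3^2)\in\bar{\F}_{q^2}^*$; since $u$ has finite multiplicative order $n$ (in fact $n\ge 2$, because $u\neq 1$), any multiple of $n$ taken as $i+1$ with $i\ge 1$ works. The paper gives no argument of its own and simply imports the statement from \cite{vicino}, so your root-of-unity argument supplies a self-contained justification.
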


Now we are ready for the first of the two theorems in which we construct functions with a certain valuation. 
In both theorems, we will construct functions of valuation $3j+2$ for certain values of $j$, but in the first theorem we will assume that $\alpha^2-\alpha+1\neq 0$, whereas in the second we will assume that $\alpha^2-\alpha+1=0$.
The first theorem is inspired by~\cite[Theorem 3.12]{vicino}. Also its proof is quite similar to that of ~\cite[Theorem 3.12]{vicino}.

\begin{theorem}\label{thm:3.12}
    Let $P = P_{(a,b)}\in \mathbb{P}_{\mathcal{F}_m}\backslash\mathcal{O}$, $m=\frac{q+1}{3}$, and assume that $\alpha^2-\alpha+1\neq 0$. 
    Let $i$ denote the $\pc$-order of $\alpha$. 
    Then if $i> m-2$, there exist functions $f_j \in \mathcal{L}((j+1)D_\infty)$ such that $v_P(f_j) = 3j+2$ for every $j=0,\ldots m-2$. 
    If on the other hand, $i\leq m-2$, there exist functions $f_j \in \mathcal{L}((j+1)D_\infty)$ such that $v_P(f_j) = 3j+2$ for $j\in 0,\ldots, i-1$, as well as a function $f_i\in\mathcal{L}((i+1)D_\infty)$ such that $v_P(f_i) = 3i+3$. 
\end{theorem}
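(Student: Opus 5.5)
We will build the functions $f_j$ explicitly as polynomials in $x:=\frac{X-a}{a}$ and $y:=\frac{Y-b}{b}$ of total degree at most $j+1$. Any such polynomial automatically lies in $\mathcal{L}((j+1)D_\infty)$, since $X$ and $Y$ have pole divisor $D_\infty$. The valuation at $P$ will be read off from the $\tilde{T}$-expansions at $Q$. Since $P$ is unramified in $\mathcal{F}_{q+1}/\mathcal{F}_m$, valuations at $P$ and at $Q$ agree.

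\textbf{Step 1: reduce to one variable in the low-order terms.} By \eqref{eq:pow_ser_exp_x_a} and \eqref{eq:pow_ser_exp_y_b} with $(q+1)/m=3$, put $t:=\tilde{T}/A$ and $c:=-A^{q+1}/B^{q+1}$. Then, up to $O(\tilde T^q)$,
\begin{align*}
x &= (1+t)^3-1, & y &= (1+ct)^3-1 .
\end{align*}
Moreover $c$ is expressible in terms of $\alpha$: we have $A^{q+1}=a^m$ and $B^{q+1}=b^m=-1-a^m$, so $c=\alpha$. Thus $1+x=(1+t)^3$ and $1+y=(1+\alpha t)^3$ modulo $O(\tilde{T}^q)$. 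Since the degrees we need stay below $q$, as $3(m-1)+3<q+1$, the tail is harmless.

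\textbf{Step 2: construct the functions.} We look for $f_j=\sum_{k+l\le j+1} c_{kl}\,x^k y^l$ such that $f_j(t)$ vanishes to order $3j+2$ in $t$. This is a linear system, and the idea of \cite{vicino}*{Theorem 3.12} is to write the solution in closed form via $\pc$ and $\qc$. Concretely, we take a combination of the form
\[
f_j=\pc_{j+1}(\alpha)\,G_j(x,y)-\qc_{j+1}(\alpha)\,H_j(x,y),
\]
where $G_j$ and $H_j$ are suitable homogenizations of $(s+\zeta_3)^{3(j+1)}$-type expressions. The motivation is that the Taylor expansion of $(1+t)^{3}$ against $(1+\alpha t)^3$ produces exactly the cube-root-of-unity combinations appearing in the definitions of $\pc_i$ and $\qc_i$. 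An alternative route is recursive: set $f_0=\tilde{f}_0/a^m$, which has valuation $2$ by \eqref{power:f0}, and define $f_{j+1}$ as a combination of $x f_j$, $y f_j$ and $f_{j-1}$-type terms, with coefficients chosen to kill the $t^{3j+2},t^{3j+3},t^{3j+4}$ terms. The coefficient that must be nonzero for this to succeed will turn out to be essentially $\pc_{j+2}(\alpha)$. I would first compute $f_0$ and $f_1$ by hand to pin down the correct normalization.

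\textbf{Step 3: identify the leading coefficient.} We need to show that the coefficient of $t^{3j+2}$ in $f_j$ is a nonzero multiple of $\pc_{j+1}(\alpha)$, or of $\qc_{j+1}(\alpha)$ under the other normalization. The factor $(\alpha^2-\alpha+1)^{\cdot}$ arises from $(s+\zeta_3)(s+\zeta_3^2)$, and the hypothesis $\alpha^2-\alpha+1\ne0$ ensures it does not vanish. The remaining hypotheses give $\alpha\neq0,1$.

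For $j\le i-1$ we have $\pc_{j+1}(\alpha)\neq0$, because $i$ is the smallest index with $\pc_{i+1}(\alpha)=0$; hence $v_P(f_j)=3j+2$. When $i\le m-2$, at $j=i$ the $t^{3i+2}$ coefficient vanishes since $\pc_{i+1}(\alpha)=0$. The next coefficient is then a multiple of $\qc_{i+1}(\alpha)$, which is nonzero by Lemma~\ref{lemma:PQ_no_common_zeros}. This gives $v_P(f_i)=3i+3$.

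\textbf{Main obstacle.} The crux is the coefficient identity: computing the $t^{3j+2}$ and $t^{3j+3}$ coefficients of the explicit combination and matching them with $\pc_{j+1}(\alpha)$ and $\qc_{j+1}(\alpha)$. I would attack it with generating functions in $\zeta_3$. Write $1+x=u^3$ and $1+y=v^3$ with $u=1+t$ and $v=1+\alpha t$, and factor $u^3-v^3=(u-v)(u-\zeta_3 v)(u-\zeta_3^2 v)$. Then expand around the linear forms $u+\zeta_3 v$, where the denominators $s$, $s-1$ and $\zeta_3-\zeta_3^2$ in the definitions of $\pc_i$ and $\qc_i$ should naturally appear. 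One must also check that the tail $O(\tilde{T}^q)$ does not interfere, which holds since $3j+3\le 3m-3<q$.
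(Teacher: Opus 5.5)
Your reduction is correct and useful. With $t=T$ you get $1+x_a=(1+t)^3$ exactly and $1+\frac{Y-b}{b}=(1+\alpha t)^3+O(t^q)$, because $-A^{q+1}/B^{q+1}=a^m/(1+a^m)=\alpha$. Your final step (minimality of the $\pc$-order plus Lemma~\ref{lemma:PQ_no_common_zeros}) is also the right way to finish. One small slip: $3(m-1)+3=q+1$, so your first inequality is false; what you need is $3j+3\le 3(m-2)+3=q-2$, which you state later.

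The problem is that the proposal stops exactly where the content of the theorem lies. Neither construction is actually given. The ``suitable homogenizations'' $G_j,H_j$ are never written down. In the recursive route the coefficients are only ``chosen to kill'' three terms, and the claim that the obstruction ``will turn out to be essentially $\pc_{j+2}(\alpha)$'' is left unproved. So nothing shows that the $T^{3j+2}$- and $T^{3j+3}$-coefficients of $f_j$ are a common nonzero multiple of $3\pc_{j+1}(\alpha)$ and $\qc_{j+1}(\alpha)$. Without this, neither $v_P(f_j)=3j+2$ for $j\le i-1$ nor $v_P(f_i)=3i+3$ follows. In particular, Lemma~\ref{lemma:PQ_no_common_zeros} can only be applied once the $T^{3i+3}$-coefficient has been identified with $\qc_{i+1}(\alpha)$.

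Moreover, as sketched the recursion does not close. Killing the $t^{3j+2},t^{3j+3},t^{3j+4}$ coefficients gives no control on the $t^{3j+5}$ coefficient, which depends on the full expansion of $f_j$. The right inductive statement is the full expansion \eqref{eq:f_j_exp}. In your model its two-term shape is in fact automatic: a polynomial of degree $\le j+1$ in $(1+t)^3,(1+\alpha t)^3$ is a polynomial of degree $\le 3j+3<q$ in $t$. So the theorem amounts to showing that the two surviving coefficients are in ratio $3\pc_{j+1}(\alpha):\qc_{j+1}(\alpha)$, which is precisely the missing step.

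The paper does not reprove this either. It checks that $f_0=\alpha^{-1}\tilde f_0\in\mathcal{L}(D_\infty)$ has exactly the expansion $3\pc_1T^2+\qc_1T^3+O(T^q)$; your $\tilde f_0/a^m$ is $1-\alpha$ times this. It then runs the recursive construction of \cite{vicino}*{Theorem 3.12} verbatim, and that is where the hypothesis $\alpha^2-\alpha+1\neq0$ enters. To complete your argument you must either invoke that construction with this exact normalization, or actually prove the coefficient identity for your $f_j$.
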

\begin{proof}
    Let $\pc_j:=\pc_j(\alpha)$ and $\qc_j = \qc_j(\alpha)$.
    Recall that $Q=Q_{(A,B)}$ is the place over $P$ in $\mathcal{F}_{q+1}$, and $T=\frac{U-A}{A}$ is a local parameter of $Q$. 
    We now claim that for every $0\leq j \leq \min\{m-2,i\}$, there is a function $f_j\in\mathcal{L}((j+1)D_\infty)$ such that the power series expansion of $f_j$ with respect to $T$ is 
    \begin{align}\label{eq:f_j_exp}
        f_j = 3 \pc_{j+1} T^{3j+2} + \qc_{j+1} T^{3j+3} + O(T^q).
    \end{align}
    Recall that by definition, $\pc_{i+1}=0$, and by Lemma~\ref{lemma:PQ_no_common_zeros} $\qc_{i+1}\neq 0$, so for $j=i$ we have 
    \begin{align*}
        f_i = \qc_{i+1} T^{3i+3} + O(T^q).
    \end{align*}
    Note that $3j+3 \leq 3(m-2)+3 = 3m -3 = q+1-3 = q-2 < q$, so Equation \eqref{eq:f_j_exp} would imply that $v_P(f_j)=v_Q(f_j) =3j+2$ for $0\leq j \leq \min\{m-2,i-1\}$, while we know from the above expression for $f_i$ that $v_P(f_i) =v_Q(f_i)=3i+3$. 
    Thus if we can find $f_j$ as in Equation \eqref{eq:f_j_exp}, then we are done.
    
    As in the proof of~\cite[Theorem 3.12]{vicino}, we will to create $f_0,\ f_1$ and $f_2$ explicitly, while afterwards one can create $f_j$ for $3\leq j\leq i$ recursively. As a matter of fact, the proof from \cite{vicino} can be taken verbatim as soon as we have constructed $f_0$. In other words: the construction of $f_1$, $f_2$ and $f_j$ for $j\ge 3$, as well as the remainder of the proof can be carried out in exactly the same way as the proof of~\cite[Theorem 3.12]{vicino}.
 
    Let $f_0 := \alpha^{-1} \tilde{f}_0$, with $\tilde{f}_0$ as in Equation \eqref{def:f0tilde}. %$\tilde{f}_0 = a^m \left(\frac{X-a}{a}\right) + b^m \left(\frac{Y-b}{b}\right)$ considered at the beginning of section 3. 
    Further denote by $x_a:=\frac{X-a}{a}$, and recall that $a=A^3$, $b=B^3$ as well as the local parameter $T=\frac{U-A}{A}$. Then substituting $m=(q+1)/3$ in the powers series obtained in \eqref{eq:pow_ser_exp_x_a} and \eqref{eq:pow_ser_exp_y_b} we obtain 
    \begin{align*}
        x_a = 3T+3T^2+T^3 \quad \text{and} \quad         \frac{Y-b}{b} = - 3\frac{A^{q+1}}{B^{q+1}} T + 3\frac{A^{2(q+1)}}{B^{2(q+1)}} T^2 - \frac{A^{3(q+1)}}{B^{3(q+1)}} T^3  + O(T^q).
    \end{align*}
    Using these power series expansions, Equation \eqref{def:f0tilde} and the fact that $f_0 = \alpha^{-1} \tilde{f}_0 =  \frac{a^m+1}{a^m}\tilde{f}_0 = \frac{A^{q+1}+1}{A^{q+1}}\tilde{f}_0$, the power series expansion of $f_0$ is readily found to be as follows:
%    and finally using that $a^m = A^{3m} = A^{q+1}$, $b^m = B^{q+1}$ and $B^{q+1} = -1-A^{q+1}$, and we get from Equation \eqref{power:f0}
%    \begin{align*}
%        \tilde{f}_0 & = \left(\frac{3A^{q+1}}{1+A^{q+1}}\right) T^2 + \left(A^{q+1} - \frac{A^{3(q+1)}}{(-1-A^{q+1})^2} \right) T^3  + O(T^q)\\
%        & = 3\alpha T^2 + \left(\frac{A^{q+1}+2A^{2(q+1)}}{(1+A^{q+1})^2} \right) T^3  + O(T^q). 
%    \end{align*}
%    Now we can compute the power series expansion of $f_0 = \alpha^{-1} \tilde{f}_0 =  \frac{a^m+1}{a^m}\tilde{f}_0 = \frac{A^{q+1}+1}{A^{q+1}}\tilde{f}_0$, namely
    \begin{align*}
        f_0 = 3 T^2 + \frac{1+2A^{q+1}}{1+A^{q+1}} T^3  + O(T^q)
        = 3 T^2 + (\alpha+1) T^3  + O(T^q)
        = 3 \pc_1 T^2 + \qc_1 T^3 + O(T^q).
    \end{align*}
    We see that Equation \eqref{eq:f_j_exp} indeed holds for $f_0$ and that $f_0 \in \mathcal{L}(D_\infty)$.  
 This last fact follows from the fact that $f_0$ and $\tilde{f}_0$ are linear combinations of $X-a$ and $Y-b$ (so only poles at $P_\infty^k$ can occur) and that for all $k=1,\ldots,m$, 
    \begin{align*}
        v_{P_\infty^k}(f_0) = v_{P_\infty^k}(\tilde{f}_0) 
        \geq \min\{ v_{P_\infty^k}(X-a),\ v_{P_\infty^k}(Y-b)\} 
        = \min\{ -1 , -1\} = -1.
    \end{align*}
    The remainder of the proof can be taken verbatim from that of \cite[Theorem 3.12]{vicino}.    
\end{proof}
Theorem~\ref{thm:3.12} will turn out to provide us with some very useful functions, but since it relies on the assumption that $\alpha^2-\alpha+1\neq 0$, the natural thing to do now is to bring an equivalent theorem that does not rely on this assumption. While Theorem \ref{thm:3.12} is similar to \cite[Theorem 3.12]{vicino}, the next theorem is inspired by \cite[Theorem 3.19]{vicino}. Also its proof is largely the same as \cite[Theorem 3.19]{vicino}. 
\begin{theorem}\label{thm:3.19}
    Assume that $\alpha^2 -\alpha + 1 = 0$. 
    Then for every integer $j = 0,\ldots m-2$, there exists a function $g_j\in\mathcal{L}((j+1)D_\infty)$ with $v_P(g_j)=3j+2$. 
\end{theorem}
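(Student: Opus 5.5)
The plan is to imitate Theorem~\ref{thm:3.12}: build functions $g_j$ whose expansion in the local parameter $T=\frac{U-A}{A}$ at $Q=Q_{(A,B)}$ has the shape
\begin{align*}
g_j = c_j T^{3j+2} + d_j T^{3j+3} + O(T^q), \qquad c_j \neq 0,
\end{align*}
with $g_j\in\mathcal{L}((j+1)D_\infty)$. Since $3j+3\le 3(m-2)+3=q-2<q$ for $j\le m-2$, such an expansion gives $v_P(g_j)=v_Q(g_j)=3j+2$, because $P$ is unramified in $\mathcal{F}_{q+1}/\mathcal{F}_m$.

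\textbf{Base case.} Take $g_0:=f_0=\alpha^{-1}\tilde f_0$. This is legitimate because $\alpha\neq0$, since $a\neq 0$. The expansion computed in the proof of Theorem~\ref{thm:3.12} did not use $\alpha^2-\alpha+1\neq0$, so it still gives
\begin{align*}
g_0=3T^2+(\alpha+1)T^3+O(T^q),
\end{align*}
and $g_0\in\mathcal{L}(D_\infty)$.

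\textbf{Inductive step.} I would use products of previously built functions with $x_a=\frac{X-a}{a}=3T+3T^2+T^3$, with each other, and with $x_a^k$. The general template is
\begin{align*}
g_{j} = g_{j-1}\,g_0 - \lambda\, x_a^{3}g_{j-1} - \mu\, x_a^{2} g_{j-1} - \cdots.
\end{align*}
The coefficients $\lambda,\mu,\dots$ are constants, chosen to cancel the $T^{3j+1}$ and lower terms that a naive product $g_{j-1}g_0$ (valuation $3j+1$, pole order $j+1$) would produce. The pole orders do not exceed $j+1$ because each of $x_a$, $g_0$, $g_{j-1}$ has pole divisor supported on $D_\infty$ with the appropriate multiplicity. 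In \cite{vicino} the recursion in the generic case yields leading coefficients $3\pc_{j+1}(\alpha)$. When $\alpha^2-\alpha+1=0$ the identities $\pc_i\qc_{\ell+j}-\pc_j\qc_{\ell+i}=(s^2-s+1)^{3j}\pc_{i-j}\qc_\ell$ degenerate. The degeneracy should make the recursion simpler, since the factor $(\alpha^2-\alpha+1)$ kills the correction terms.

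\textbf{The special value of $\alpha$.} The condition $\alpha^2-\alpha+1=0$ means $\alpha=-\zeta_3$ or $-\zeta_3^2$. Then $\alpha+\zeta_3$ or $\alpha+\zeta_3^2$ vanishes, so one of the two summands in $\pc_i(\alpha)$ and $\qc_i(\alpha)$ disappears. The coefficients therefore become explicit: pure powers $(\alpha+\zeta_3^{\pm1})^{3i}$ divided by nonzero constants. Concretely, I expect that $g_0$ equals $3T^2(1+\beta T)$ with $\beta=(\alpha+1)/3$, so that $1+\beta T$ is, to order $T^q$, a perfect cube-like expression. This should let me set
\begin{align*}
g_j := g_0\cdot h^j, \qquad h := g_0 - c\,x_a^2,
\end{align*}
with $c$ chosen so that $h$ has valuation $3$, i.e.\ the $T^2$ terms cancel, and its $T^3$ coefficient is nonzero. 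Then $v_P(g_j)=2+3j$ and $g_j\in\mathcal{L}((j+1)D_\infty)$.

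\textbf{Main obstacle.} The hard part is exactly checking that a valuation-$3$ function in $\mathcal{L}(D_\infty)$ exists, or, failing that, that the recursive corrections keep a nonzero leading coefficient for all $j\le m-2$. Only the finite expansions up to $T^{3m-3}$ matter. I would first try $h=g_0-\tfrac13x_a^2$. Its $T^3$ coefficient is $(\alpha+1)-2$, which is nonzero since $\alpha\neq1$. But it lies in $\mathcal{L}(2D_\infty)$, not $\mathcal{L}(D_\infty)$, so the pole bound fails for large $j$.

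Hence I would instead mimic \cite[Theorem 3.19]{vicino} directly: construct $g_1,g_2$ by hand as combinations of $g_0^2$, $g_0x_a$ and $x_a^k$ inside $\mathcal{L}(2D_\infty)$ and $\mathcal{L}(3D_\infty)$. I would then run the same recursion there, verifying at each step that the leading coefficient is a nonzero multiple of $(\alpha+\zeta_3^{\pm1})^{3j}$. This is nonzero because $\alpha\neq-\zeta_3^{\mp1}$, the two roots being distinct.
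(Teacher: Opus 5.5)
Your base case is the paper's, and so is your fallback. The paper proves this theorem by stating the invariant $g_j=3T^{3j+2}+(\alpha+1)T^{3j+3}+O(T^q)$, observing that $g_0:=f_0$ satisfies it, and then taking the rest verbatim from Theorem 3.19 of \cite{vicino}. Citing that proof is legitimate, and with it your argument is the paper's. The trouble is that everything you add beyond the citation is wrong, and it concerns exactly the inductive step, the only place where this case differs from Theorem~\ref{thm:3.12}.

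First, your template $g_j=g_{j-1}g_0-\lambda x_a^3g_{j-1}-\mu x_a^2g_{j-1}-\cdots$ leaves the Riemann--Roch space, since $x_a^2g_{j-1}\in\mathcal{L}((j+2)D_\infty)$ and $x_a^3g_{j-1}\in\mathcal{L}((j+3)D_\infty)$. Corrections that do stay in $\mathcal{L}((j+1)D_\infty)$, of the form $g_{j-1}\phi$ with $\phi\in\mathcal{L}(D_\infty)=\langle 1,X,Y\rangle$, cannot help either. Indeed, $g_{j-1}(g_0-\phi)$ is zero or has valuation at most $3j-1+2=3j+1$, because the elements of $\mathcal{L}(D_\infty)$ with $v_P\ge 2$ are the multiples of $\tilde f_0$, and $v_P(\tilde f_0)=2$. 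The same fact shows your ``main obstacle'' is not something to check: a valuation-$3$ element of $\mathcal{L}(D_\infty)$ does not exist. (Also, the $T^3$-coefficient of $g_0-\tfrac13x_a^2$ is $\alpha-5$, not $(\alpha+1)-2$.)

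Second, your expectation that $\alpha^2-\alpha+1=0$ simplifies the generic recursion is backwards. Here $(1+\alpha)(2-\alpha)=2+\alpha-\alpha^2=3$, so the invariant reads $g_k=(1+\alpha)T^{3k+2}(T+2-\alpha)+O(T^q)$. Hence $g_ag_b\equiv g_cg_d \pmod{T^q}$ whenever $a+b=c+d$, and cancelling leading terms between products of previously built $g$'s only produces functions of valuation $\ge q$. The degeneracy therefore destroys the cancellation mechanism rather than simplifying it; the vanishing factor $(\alpha^2-\alpha+1)^{3j}$ in the identities you quote is a symptom of this.

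The missing idea is to carry along functions of valuations $\equiv 0,1 \pmod 3$ with the same two-term shape. Note first that $1+\alpha\neq 0$: $\alpha=-1$ would give $\alpha^2-\alpha+1=3\neq0$, since $p\neq 3$. Put
\begin{itemize}
\item $\gamma:=2-\alpha$,
\item $H_2:=(1+\alpha)^{-1}g_0$,
\item $H_1:=(1+\alpha)^{-1}(x_a-H_2)$,
\item $H_3:=H_1^2-\gamma H_2$,
\item $H_{k+3}:=H_2H_k-\gamma H_{k+2}$ for $k\ge 1$.
\end{itemize}
Use $x_a=3T+3T^2+T^3$ and the identity $T^{k+2}(T+\gamma)^2-\gamma T^{k+2}(T+\gamma)=T^{k+3}(T+\gamma)$. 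Induction then gives $H_k=T^k(T+\gamma)+O(T^q)$ and $H_k\in\mathcal{L}(\lceil (k+1)/3\rceil D_\infty)$. Consequently $g_j:=(1+\alpha)H_{3j+2}=3T^{3j+2}+(\alpha+1)T^{3j+3}+O(T^q)$ lies in $\mathcal{L}((j+1)D_\infty)$. It has $v_P(g_j)=3j+2$ because $3\neq 0$ and $3j+2<q$ for $j\le m-2$. The leading coefficient stays $3$, so no bookkeeping with $(\alpha+\zeta_3^{\pm1})^{3j}$ is needed.
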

\begin{proof}
    This proof will follow a similar structure to that of Theorem~\ref{thm:3.12}, however in this proof, we can use some of the results that have already been found. 
    We claim that for $j = 0,\ldots m-2$ there exist functions $g_j\in \mathcal{L}((j+1)D_\infty)$ with power series expansion 
    \begin{align*}%\label{eq:g_j_exp}
        g_j = 3T^{3j+2} + (\alpha+1)T^{3j+3} + O(T^q).
    \end{align*}
    Note that $3\pc_1=3$ and $\qc_1=(\alpha+1)$, so we already know of at least one function for which this holds, namely $f_0$, so we set $g_0:=f_0$. From this point on, the proof is exactly the same as the proof of \cite[Theorem 3.19]{vicino}.
\end{proof}
Theorems~\ref{thm:3.12} and~\ref{thm:3.19} imply that, no matter what $\alpha^2-\alpha+1$ is, the fact that if the $\pc$-order of $\alpha$ is larger than $m-2$, then there exist a function $f_j$ for every $0\leq j \leq m-2$ such that $f_j\in\mathcal{L}((i+1)D_\infty)$ and $v_P(f_j) = 3j+2$. In the case where the $\pc$-order of $\alpha$ is at most $m-2$, Theorem~\ref{thm:3.12} gives us that we have a function $f_i\in\mathcal{L}((i+1)D_\infty)$ with valuation $v_P(f_i) = 3i+3$. This brings us to the main theorem of this section.

\begin{theorem}\label{conj:gaps}
    Let $P:=P_{(a,b)}\in \mathbb{P}_{\mathcal{F}_m}\backslash \mathcal{O}$ and $m=\frac{q+1}{3}$. Moreover, let $i$ be the $\pc$-order of $\alpha=a^m/(1+a^m)$. Denote by $G(P)$ the gap sequence for $P$.
    
    If $i> m-2$, then 
    \begin{align*}
        G(P)=
        \left\{ k(q+1) + \ell_0 \cdot 2 + \ell_1 \cdot (3j+2) + s +1 \ \middle\vert\ 
        \begin{array}{l}
            j,k,\ell_0,\ell_1,s \geq 0,\ s \leq 1,\ j \leq m-2,\\
            3k + \ell_0 + (j+1)\ell_1 + s \leq m-3.
        \end{array}
        \right\}
    \end{align*}
    in case in $P$ is $\F_{q^2}$-rational, and 
    \begin{align*}
        G(P)=
        \left\{ kq + \ell_0 \cdot 2 + \ell_1 \cdot (3j+2) + s +1 \ \middle\vert\ 
        \begin{array}{l}
            j,k,\ell_0,\ell_1,s \geq 0,\ s \leq 1,\ j \leq m-2,\\
            3k + \ell_0 + (j+1)\ell_1 + s \leq m-3.
        \end{array}
        \right\}
    \end{align*}
    in case $P$ is not $\F_{q^2}$-rational.

    If instead $i\leq m-2$, then% we have the gap sequences 
    \begin{align*}
        G(P)
        =
        \left\{  k(q+1) + \ell_0\cdot 2 + \ell_1 \cdot (3j+2) + \ell_2\cdot(3i+3) + s + 1\
        \middle\vert\ k,\ell_0,\ell_1,\ell_2, s \geq 0,\ s \le 1, \right.\\
        \left.  \text{and }\ j \leq i-1,\ 3k + \ell_0 + \ell_1(j+1)+ \ell_2 (i+1)+ s\leq m-3
         \right\}
    \end{align*}
    for $P$ $\F_{q^2}$-rational, and 
    \begin{align*}
        G(P)
        =
        \left\{  kq + \ell_0\cdot 2 + \ell_1 \cdot (3j+2) + \ell_2\cdot(3i+3) + s + 1
        \middle\vert\ k,\ell_0,\ell_1,\ell_2, s \geq 0,\ s \le 1,\ j \leq i-1,\right. \\
        \left. \text{and }\ 3k + \ell_0 + \ell_1(j+1)+ \ell_2 (i+1)+ s\leq m-3
        \right\}
    \end{align*}
    for $P$ not $\F_{q^2}$-rational.
\end{theorem}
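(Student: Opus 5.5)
The plan mirrors the proof of Theorem~\ref{thm:gaps_m2}: we first show that every listed number is a gap, and then that the listed set has exactly $g(\mathcal{F}_m)=(m-1)(m-2)/2$ elements.

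For the first step we take $w=dY/X^{m-1}$, with divisor $(m-3)D_\infty$ by Equation~\eqref{div:w}. We use the following functions:
\begin{itemize}
\item $X-a$, with $v_P=1$ and pole divisor $D_\infty$;
\item $f_0$ (or $\tilde f_0$), with $v_P=2$ and in $\mathcal{L}(D_\infty)$;
\item the functions $f_j$ (respectively $g_j$), with $v_P=3j+2$ and $f_j\in\mathcal{L}((j+1)D_\infty)$, from Theorems~\ref{thm:3.12} and~\ref{thm:3.19};
\item when $i\le m-2$, the function $f_i\in\mathcal{L}((i+1)D_\infty)$ with $v_P(f_i)=3i+3$;
\item $F_P$, with $v_P(F_P)=q+1$ (or $q$ if $P$ is not $\F_{q^2}$-rational) and pole divisor $\frac{q+1}{m}D_\infty=3D_\infty$, by Equations~\eqref{div:Fp} and~\eqref{div:Fpnr}.
\end{itemize}
Set $h=(X-a)^sf_0^{\ell_0}f_j^{\ell_1}f_i^{\ell_2}F_P^k$. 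Then $v_P(hw)$ equals the stated expression minus one. At each $P_\infty^r$ we have $v(hw)\ge m-3-(3k+\ell_0+(j+1)\ell_1+(i+1)\ell_2+s)\ge 0$, and $hw$ has no other poles, so $hw$ is holomorphic. Lemma~\ref{lemma:holomorphic_diffs} then shows that every element of the listed set is a gap.

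It remains to show that the listed set has at least $g$ elements, so that equality holds. Here the difficulty is that the parametrization is highly redundant. For example, $f_0^2$ and $f_1$ contribute valuations $4$ and $5$, and different tuples $(k,\ell_0,\ell_1,j,s)$ often produce the same number. So we do not count tuples. Instead we describe the set directly by residues.

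In the case $i>m-2$, write a candidate gap minus one as $n=k(q+1)+r$ with $0\le r\le q$. Since $q+1=3m$, the part $r$ ranges over values $<3(m-2-3k)+3$. For a given "weight budget" $c=m-3-3k$, the attainable $r$ from $2\ell_0+(3j+2)\ell_1+s$ are $r=3t+u$ with weight $t+1$ when $r\equiv 2\pmod 3$ (use a single $f_t$), weight $t+1$ when $r\equiv 0$ and $t\ge1$ (use $f_{t-1}$ together with $X-a$), and so on. We determine the minimal weight $w(r)$ needed to produce each $r$; $w(r)$ is roughly $\lceil r/3\rceil$, with small corrections depending on $r \bmod 3$. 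The number of gaps with a given $k$ is then $\#\{r: w(r)\le m-3-3k\}$, which is about $3(m-3-3k)+c_0$ for a constant $c_0$. Summing over $k$ should give $(m-1)(m-2)/2$, exactly as in the sum computation in Theorem~\ref{thm:gaps_m2}. Since $3(m-3)+\dots<q$, the blocks for distinct $k$ do not overlap, so distinctness across $k$ is automatic. The non-rational case ($kq$ in place of $k(q+1)$) uses the same block structure after shifting by $k$, provided the shifted blocks still do not collide; this must be checked as in the Corollary for $m=(q+1)/2$.

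The main obstacle is the case $i\le m-2$. There the generators $3j+2$ exist only for $j\le i-1$, and the extra generator $3i+3$ has cost $i+1$. The minimal-weight function $w(r)$ then becomes periodic-like with period tied to $3i+3$: values $r\ge 3i+2$ in residue class $2$ must be built as combinations of $f_i$, the $f_j$ and $f_0$. I would first compute $w(r)$ for $r<3i+3$ exactly as above. I would then show that for larger $r$ the minimal representation uses $\lfloor r/(3i+3)\rfloor$ copies of $f_i$ plus an optimal remainder, which amounts to a greedy lemma for this weighted semigroup. The count per block should still come out to the same value as in the generic case, namely $\#\{r : w(r)\le c\}$ equal to the generic count, because each generator has weight matching its valuation divided by three, up to a bounded error. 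If the exact count proves messy, a fallback is to show that the gap counts per $k$-block coincide with those of the case $i>m-2$ via an explicit bijection of minimal representations. The total would then again be $g$.
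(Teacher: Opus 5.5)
Your construction of the gaps is exactly the paper's Lemma~\ref{theyaregaps}. Organizing the count by $k$-blocks is also legitimate: with budget $c=m-3-3k$, every block value $r=2\ell_0+(3j+2)\ell_1+(3i+3)\ell_2+s$ is at most $3(m-3)<q$, so the blocks are disjoint both for $k(q+1)$ and for $kq$.

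The gap is the count itself, which is where all the work of the theorem lies. For $i>m-2$ you only say the block size is ``about $3(m-3-3k)+c_0$'' and that the sum ``should'' be $g$. For $i\le m-2$ you conjecture that the block size equals the generic one, supported by a ``bounded error'' heuristic and a greedy lemma that you do not prove. An approximate argument cannot give an exact cardinality. Moreover, the block sets genuinely differ from the generic ones. When $(i+1)\mid c$, the value $r=3c-1$ (generically $f_{c-1}$) is no longer reachable within budget $c$, while $r=3c$ (from $f_i^{c/(i+1)}$) becomes reachable. So you need an exact description of each block, and the proposal does not provide one.

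The missing observation is an exact identity. For every admissible tuple, write $c'=\ell_0+(j+1)\ell_1+(i+1)\ell_2+s$ for its pole order at each $P_\infty^r$. Then
\[
r \;=\; 3c'-(\ell_0+\ell_1+2s).
\]
Consequences:
\begin{itemize}
\item $r\le 3c'$, with equality only for pure powers of $f_i$.
\item $r=3c'-1$ occurs only for $f_0f_i^{\ell_2}$ or $f_jf_i^{\ell_2}$. When $i\le m-2$, the restriction $j\le i-1$ then forces $(i+1)\nmid c'$.
\end{itemize}
Combine this with explicit representatives: $f_jf_i^{\ell_2}$, $f_0f_jf_i^{\ell_2}$, $f_j(X-a)f_i^{\ell_2}$, $(X-a)f_i^{\ell_2}$, $f_i^{\ell_2}$, and $f_0f_{i-1}(X-a)f_i^{\ell_2}$. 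The last one reaches $3(i+1)(\ell_2+1)-1$ at the cost of one extra unit of weight. This shows that the block with budget $c\ge 1$ is
\begin{itemize}
\item $\{0,\dots,3c-1\}$ if $i>m-2$ or $(i+1)\nmid c$;
\item $\{0,\dots,3c-2\}\cup\{3c\}$ if $(i+1)\mid c$.
\end{itemize}
Either way the block has exactly $3c$ elements, and it has one element if $c=0$. Summing over $0\le k\le\lfloor (m-3)/3\rfloor$ gives $(m-1)(m-2)/2$ after a check of $m \bmod 3$.

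With this lemma your route is complete and much shorter than the paper's. The paper instead restricts to the subfamilies $G_1,G_2,G_3$ (resp.\ $G_{xa},G_{xb}$) with $\ell_0,\ell_1\le 1$. It shows each is repetition-free, computes their intersections, and evaluates the resulting floor sums in four cases comparing $i$ with $m$.
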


Since the proof of Theorem \ref{conj:gaps} is quite long, we present it in a separate section.

\section{Proof of Theorem \ref{conj:gaps}} \label{sec_four}

We divide the proof of Theorem \ref{conj:gaps} into two steps, each presented in a subsection. First of all we prove that all the claimed values above are in fact gaps in the different cases (Lemma \ref{theyaregaps}). Then we will show in Lemmas \ref{igtm2} and \ref{counting-ismall} that the found gaps are all the gaps, that is, the sets listed above all have cardinality $g(\mathcal{F}_{(q+1)/3})$. 

\subsection{Construction of gaps}

\begin{lemma} \label{theyaregaps}
All the values in Theorem \ref{conj:gaps} are gaps at $P_{(a,b)}\in \mathbb{P}_{\mathcal{F}_{(q+1)/3}}\backslash \mathcal{O}$
\end{lemma}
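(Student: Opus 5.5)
We follow the strategy already used in Theorem~\ref{thm:gaps_m2}. By Lemma~\ref{lemma:holomorphic_diffs}, to show that a value $n$ is a gap at $P$ it suffices to build a holomorphic differential $\omega$ with $v_P(\omega)=n-1$. All our differentials will have the form $h\,w$, where $w=dY/X^{m-1}$ is the differential of Equation~\eqref{div:w}, with $(w)=(m-3)D_\infty$, and $h$ is a product of functions whose only poles lie in the support of $D_\infty$.

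Fix admissible parameters $k,\ell_0,\ell_1,\ell_2,s,j$ as in Theorem~\ref{conj:gaps}. We take
\[
h:=F_P^{k}\,\tilde f_0^{\ell_0}\,f_j^{\ell_1}\,f_i^{\ell_2}\,(X-a)^{s}.
\]
In the case $i>m-2$ we drop the factor $f_i$. When $\alpha^2-\alpha+1=0$ we use $g_j$ in place of $f_j$; in that case Theorem~\ref{thm:3.19} provides $g_j$ for every $j\le m-2$. This is consistent with the statement because the $\mathcal P$-order is not defined for such $\alpha$ (here $\alpha=-\zeta_3$ or $-\zeta_3^2$), so we are in the first case.

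We then record the valuations and pole orders of each factor.
\begin{itemize}
\item[(i)] $v_P(X-a)=1$ and $X-a\in\mathcal L(D_\infty)$, since $P$ is unramified over $\bar{\mathbb F}_{q^2}(X)$ and $a^m\neq -1$.
\item[(ii)] $v_P(\tilde f_0)=2$ by Equation~\eqref{power:f0}, and $\tilde f_0\in\mathcal L(D_\infty)$, as shown in the proof of Theorem~\ref{thm:3.12}.
\item[(iii)] $v_P(f_j)=3j+2$ with $f_j\in\mathcal L((j+1)D_\infty)$, and $v_P(f_i)=3i+3$ with $f_i\in\mathcal L((i+1)D_\infty)$, by Theorems~\ref{thm:3.12} and~\ref{thm:3.19}.
\item[(iv)] $F_P$ has divisor given by Equation~\eqref{div:Fp} or~\eqref{div:Fpnr}. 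Its valuation at $P$ is $q+1$ when $P$ is rational and $q$ otherwise (using $\Phi(P)\neq P$ in the latter case). Its pole divisor is $\frac{q+1}{m}D_\infty=3D_\infty$.
\end{itemize}

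Adding these contributions gives
\[
(hw)=\bigl(k(q+1)\text{ or }kq\bigr)P+2\ell_0P+\ell_1(3j+2)P+\ell_2(3i+3)P+sP+E+\bigl(m-3-3k-\ell_0-\ell_1(j+1)-\ell_2(i+1)-s\bigr)D_\infty,
\]
where $E\ge0$ collects the remaining zeros, supported away from $P$ and $D_\infty$. The defining inequality in Theorem~\ref{conj:gaps} says exactly that the coefficient of $D_\infty$ is nonnegative. Hence $hw$ is holomorphic and $v_P(hw)$ equals the listed value minus one, so the listed value is a gap.

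The only subtle points are the following.
\begin{itemize}
\item The parameter ranges in the statement must match the ranges for which the auxiliary functions exist: $j\le m-2$ when $i>m-2$, and $j\le i-1$ together with the single extra function $f_i$ when $i\le m-2$.
\item In the non-rational case we must check that $\Phi(P)\neq P$, so that $v_P(F_P)=q$ exactly. This holds because $P$ is not $\mathbb F_{q^2}$-rational.
\end{itemize}

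The real difficulty of Theorem~\ref{conj:gaps} is not in this lemma but in the subsequent counting lemmas, which show that these gaps are all distinct and that there are $g$ of them. In this lemma the main care needed is the bookkeeping of the pole orders at $D_\infty$.
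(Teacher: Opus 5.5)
Your proposal is correct and follows essentially the same route as the paper: multiply $w=dY/X^{m-1}$ by $F_P^k$ and by powers of $f_0$ (a constant multiple of your $\tilde f_0$), $f_j$, $f_i$ and $x_a=(X-a)/a$, then use the pole bounds at $D_\infty$ to see that the resulting differential is holomorphic with the prescribed valuation at $P$. Your remark that $\alpha^2-\alpha+1=0$ falls into the case $i>m-2$ (since $\pc_n(\alpha)\neq 0$ for all $n\ge 1$) makes explicit what the paper leaves implicit. One small correction: at $D_\infty$ you only have the inequality $v_{P_\infty^k}(hw)\ge m-3-3k-\ell_0-\ell_1(j+1)-\ell_2(i+1)-s$, not an exact coefficient with $E$ supported away from $D_\infty$, but the inequality is all the argument needs.
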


\begin{proof}
    We will once again show that the stated elements are gaps by constructing holomorphic differentials with certain valuations. Therefore, let $w$ be the differential whose divisor was determined in Equation \eqref{div:w}. 
As in the theorem, we distinguish the two cases $i>m-2$ and $i\leq m-2$, so for now assume that $i>2$. 
Theorems~\ref{thm:3.12} and~\ref{thm:3.19} ensure that for each $j=1,\ldots, m-2$ there exist functions $f_j\in\mathcal{L}((j+1)D_\infty)$ with $v_P(f_j) = 3j+2$. 
We then define the differentials
\begin{align*}
    h_{k,\ell_0,\ell_1,s}:= F_P^k f_0^{\ell_0} f_j^{\ell_1} x_a^s w,
\end{align*}
where $F_P$ is the function defined in Equation \eqref{div:Fp} and $x_a=\frac{X-a}{a}$. Since we know the valuations of each of the functions constituting $h_{k,\ell_0,\ell_1,s}$, we can compute 
\begin{align*}
    v_P(h_{k,\ell_0,\ell_1,s}) = 
    \begin{cases}
        k(q+1) + \ell_0 \cdot 2 + \ell_1 \cdot (3j+2) + s & \text{ if $P$ is $\F_{q^2}$-rational} \\
        kq + \ell_0 \cdot 2 + \ell_1 \cdot (3j+2) + s & \text{ if $P$ is not $\F_{q^2}$-rational} 
    \end{cases}.
\end{align*}
Lemma \ref{lemma:holomorphic_diffs} states that $v_P(h_{k,\ell_0,\ell_1,s})+1$ is a gap as long as $h_{k,\ell_0,\ell_1,s}$ is holomorphic, so we wish to bound the exponents such that $v_{P_\infty^k}(h_{k,\ell_0,\ell_1,s}) \geq 0$ for all $k=1,\ldots m$. 
Recall that $v_{P_\infty^k}(F_P)=\frac{q+1}{m}=3$ and that each $f_j\in\mathcal{L}((j+1)D_\infty)$ as well as $(w)=(m-3) D_\infty$. Then we get that for each $k=1,\ldots m$, 
\begin{align*}
    v_{P_\infty^k}(h_{k,\ell_0,\ell_1,s}) 
    &= -3k - \ell_0 - (j+1)\ell_1 - s + m-3.
\end{align*}
This means that $h_{k,\ell_0,\ell_1,s}$ is holomorphic exactly when $3k + \ell_0 + (j+1)\ell_1 + s \leq m-3$. 
This implies that for a rational place $P$, 
\begin{align*}
    \left\{ k(q+1) + \ell_0 \cdot 2 + \ell_1 \cdot (3j+2) + s +1 \ \middle\vert\ 
    \begin{array}{l}
        j,k,\ell_0,\ell_1,s \geq 0,\ s \leq 1,\ j \leq m-2,\\
        3k + \ell_0 + (j+1)\ell_1 + s \leq m-3.
    \end{array}
    \right\}
\end{align*}
are in fact gaps at $P$, while for a non-rational place $P$, 
\begin{align*}
    \left\{ kq + \ell_0 \cdot 2 + \ell_1 \cdot (3j+2) + s +1 \ \middle\vert\ 
    \begin{array}{l}
        j,k,\ell_0,\ell_1,s \geq 0,\ s \leq 1,\ j \leq m-2,\\
        3k + \ell_0 + (j+1)\ell_1 + s \leq m-3.
    \end{array}
    \right\}
\end{align*}
are gaps at $P$. 

In the case where $i\leq m-2$, we can argue in a very similar way. In this case, Theorems~\ref{thm:3.12} and~\ref{thm:3.19} ensures that we have functions $f_j\in\mathcal{L}((j+1)D_\infty)$ with valuation $v_P(f_j)=3j+2$ for $j=0,\ldots,i-1$ as well as a function $f_i\in\mathcal{L}((i+1)D_\infty)$ with valuation $3i+3$. 
Then if we define the differentials
\begin{align*}
    \tilde{h}_{k,\ell_0,\ell_1,\ell_2,s}:= F_P^k f_0^{\ell_0} f_j^{\ell_1} f_i^{\ell_2} x_a^s w,
\end{align*}
we get gaps at $v_P(\tilde{h}_{k,\ell_0,\ell_1,\ell_2,s}) +1 $ whenever
\begin{align*}
    v_{P_\infty^k}(\tilde{h}_{k,\ell_0,\ell_1,\ell_2,s}) 
    = -3k - \ell_0 - (j+1)\ell_1 - (i+1)\ell_2 - s + m-3
    \geq 0,
\end{align*}
for all $k=1,\ldots,m$ by Lemma~\ref{lemma:holomorphic_diffs}. 
The valuation of these differentials are 
\begin{align*}
    v_P(\tilde{h}_{k,\ell_0,\ell_1,\ell_2,s}) = 
    \begin{cases}
        k(q+1) + \ell_0\cdot 2 + \ell_1 \cdot (3j+2) + \ell_2\cdot(3i+3) + s & \text{ if $P$ is $\F_{q^2}$-rational,} \\
        kq + \ell_0\cdot 2 + \ell_1 \cdot (3j+2) + \ell_2\cdot(3i+3) + s & \text{ if $P$ is not $\F_{q^2}$-rational.} 
    \end{cases}
\end{align*}
We see that 
\begin{align*}
    \left\{k(q+1) + \ell_0 2 + \ell_1  (3j+2) + \ell_2(3i+3) + s + 1 \ \middle\vert\ 
    \begin{array}{l}
        k,\ell_0,\ell_1,\ell_2, s \geq 0,\ s =0,1,\ j \leq i-1\\
        3k + \ell_0 + \ell_1(j+1)+ \ell_2 (i+1)+ s\leq m-3
    \end{array}
    \right\}
\end{align*}
are gaps when $P$ is rational, and 
\begin{align*}
    \left\{kq + \ell_0\cdot 2 + \ell_1 \cdot (3j+2) + \ell_2\cdot(3i+3) + s + 1 \ \middle\vert\ 
    \begin{array}{l}
        k,\ell_0,\ell_1,\ell_2, s \geq 0,\ s =0,1,\ j \leq i-1\\
        3k + \ell_0 + \ell_1(j+1)+ \ell_2 (i+1)+ s\leq m-3
    \end{array}
    \right\}
\end{align*}
are gaps when $P$ is not rational.
\end{proof}

\subsection{Counting of gaps}

With Lemma \ref{theyaregaps} in place, the final step needed in order to conclude the proof of Theorem \ref{conj:gaps} is to show that each set of gaps indeed contains exactly $g(\mathcal{F}_{(q+1)/3})$ many gaps.
We start with a short lemma that will turn out useful when computing sums later on.
\begin{lemma}\label{floor_sums_cheat_sheet}
    Let $n\in\mathbb{N}$. Then 
 $$
     \sum_{t=0}^{n} \bigg\lfloor \frac{t}{3} \bigg\rfloor = \fl{\frac{n(n-1)}{6}}
     =\begin{cases}
         \frac{n(n-1)}{6}, & \text{if} \ n \equiv 0 \pmod{3}, \\
         \frac{n(n-1)}{6}, & \text{if} \ n \equiv 1 \pmod{3},\\
         \frac{n(n-1)}{6}-\frac{1}{3}, & \text{if} \ n \equiv 2 \pmod{3}.
     \end{cases}
 $$
\end{lemma}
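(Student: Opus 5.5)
We prove the identity by grouping the summands $t$ according to their residue modulo $3$ and computing the sum in closed form, separately for each residue class of $n$ modulo $3$. We then compare the result with $n(n-1)/6$. Note that $n(n-1)/6$ is an integer when $n\equiv 0,1 \pmod 3$. When $n\equiv 2 \pmod 3$ it is not an integer: then $n(n-1)\equiv 2 \pmod 6$, since $n(n-1)$ is even and $\equiv 2 \pmod 3$, so its fractional part is $1/3$. This shows the second equality, between the floor and the three-case expression, once the first is established.

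For the first equality, write $n=3r+e$ with $r\ge 0$ and $e\in\{0,1,2\}$. For $0\le \rho\le r-1$, each block $t=3\rho,3\rho+1,3\rho+2$ contributes $3\rho$. The final incomplete block $t=3r,\dots,3r+e$ contributes $r(e+1)$. Hence
\begin{align*}
\sum_{t=0}^{n}\left\lfloor \frac{t}{3}\right\rfloor = 3\sum_{\rho=0}^{r-1}\rho + r(e+1) = \frac{3r(r-1)}{2}+r(e+1).
\end{align*}

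It remains to compare this with $n(n-1)/6$ in each case; this is routine algebra.
\begin{itemize}
\item For $e=0$, both sides equal $r(3r-1)/2$.
\item For $e=1$, both sides equal $r(3r+1)/2$.
\item For $e=2$, we obtain $3r(r+1)/2$, while $n(n-1)/6=(3r+2)(3r+1)/6=3r(r+1)/2+1/3$.
\end{itemize}
This gives exactly the three listed cases, and in all cases it agrees with $\lfloor n(n-1)/6\rfloor$.

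There is no real obstacle. The only point requiring care is the bookkeeping of the incomplete last block, which has $e+1$ terms each equal to $r$, and the case $e=2$ where the correction $-1/3$ appears. As a sanity check, the small values $n=2,3,4,5$ give $0,1,2,3$ on both sides.
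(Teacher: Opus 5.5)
Your proof is correct and takes essentially the same route as the paper: both split the sum according to $t=3\rho+e'$, evaluate the resulting triangular sums, and finish by checking the three residue classes of $n$ modulo $3$. The only difference is bookkeeping. The paper groups terms by the residue of $t$ modulo $3$, obtaining $\sum_{t=0}^{\lfloor n/3\rfloor}t+\sum_{t=0}^{\lfloor (n-1)/3\rfloor}t+\sum_{t=0}^{\lfloor (n-2)/3\rfloor}t$, whereas you group by blocks of three consecutive $t$, which is the same double sum taken in the other order; you also make explicit why the case expression equals $\lfloor n(n-1)/6\rfloor$, which the paper leaves to the reader.
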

\begin{proof}
It is enough to note that
    \begin{align*}
        \sum_{t=0}^{n} \bigg\lfloor \frac{t}{3} \bigg\rfloor 
        &= \sum_{t=0}^{\fl{\frac{n}{3}}} t + \sum_{t=0}^{\fl{\frac{n-1}{3}}} t + \sum_{t=0}^{\fl{\frac{n-2}{3}}} t\\
        &= \frac{\fl{\frac{n}{3}}\left(\fl{\frac{n}{3}} + 1\right)}{2} + \frac{\fl{\frac{n-1}{3}}\left(\fl{\frac{n-1}{3}} + 1\right)}{2} + \frac{\fl{\frac{n-2}{3}}\left(\fl{\frac{n-2}{3}} + 1\right)}{2}.
    \end{align*}
Now distinguishing the cases $n \equiv 0,1,2 \pmod 3$ the lemma follows directly. %one gets by a direct computation $\fl{n(n-1)/6}$
\end{proof}

We continue with counting the number of gaps for large $\mathcal P$-order. 

\subsubsection{Counting of gaps for large $\mathcal P$-order}
As in Theorem \ref{conj:gaps}, let us denote by $i$ the $\mathcal P$-order of $\alpha$. The aim of this subsection is to prove the following lemma.

\begin{lemma} \label{igtm2}
The sets
  \begin{align*}
        G:=
        \left\{ k(q+1) + \ell_0 \cdot 2 + \ell_1 \cdot (3j+2) + s +1 \ \middle\vert\ 
        \begin{array}{l}
            j,k,\ell_0,\ell_1,s \geq 0,\ s \leq 1,\ j \leq m-2,\\
            3k + \ell_0 + (j+1)\ell_1 + s \leq m-3.
        \end{array}
        \right\}
    \end{align*}
and
    \begin{align*}
        \bar G:=
        \left\{ kq + \ell_0 \cdot 2 + \ell_1 \cdot (3j+2) + s +1 \ \middle\vert\ 
        \begin{array}{l}
            j,k,\ell_0,\ell_1,s \geq 0,\ s \leq 1,\ j \leq m-2,\\
            3k + \ell_0 + (j+1)\ell_1 + s \leq m-3.
        \end{array}
        \right\}
    \end{align*}
both have cardinality equal to $g(\mathcal{F}_{(q+1)/3})$.
    \end{lemma}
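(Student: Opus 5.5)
The plan is to count $G$ and $\bar G$ directly, by describing for each fixed $k$ exactly which values the remaining parameters produce. Put $N_k := m-3-3k$; only $k$ with $N_k\ge 0$ contribute. Write a gap as $\kappa + v + 1$, where $\kappa = k(q+1)=3mk$ (resp.\ $\kappa=kq$) and
$$v = 2\ell_0 + (3j+2)\ell_1 + s,$$
subject to the cost condition $c:=\ell_0+(j+1)\ell_1+s \le N_k$. The argument has three steps: determine the set $V(N)$ of values $v$ reachable with cost at most $N$, show that different $k$ give disjoint blocks of values, and sum the block sizes.

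\textbf{Step 1: the set $V(N)$.} I claim $V(0)=\{0\}$ and $V(N)=\{0,1,\dots,3N-1\}$ for $N\ge 1$.
\begin{itemize}
\item \emph{Upper bound.} Each unit of cost contributes at most $3$ to $v$: $\ell_0$ gives $2$ per unit, $s$ gives $1$, and $f_j$ gives $(3j+2)/(j+1)<3$. Moreover some unit contributes strictly less than $3$ whenever $c\ge 1$. Hence $v\le 3c-1\le 3N-1$.
\item \emph{Lower bound.} Fix a cost $c\ge 1$. Using $\ell_1=1$ with $j=c-1-t$, together with $\ell_0+s=t$, one obtains every value between $2c-1$ and $3c-1$. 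The value $3c-2$ already needs $\ell_0=1$; the smaller values use more $\ell_0$ and possibly $s=1$, down to $v=2c-1$ coming from $\ell_0=c-1$, $s=1$.
\item \emph{Union over $c$.} The intervals $[2c-1,3c-1]$ for $c=1,\dots,N$, together with $v=0$ from $c=0$, cover $[0,3N-1]$.
\end{itemize}
The constraint $j\le m-2$ is automatic here, since $j+1\le N_k\le m-3$. Routine bookkeeping is needed to confirm that the interval claim holds for every cost $c$.

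\textbf{Step 2: disjointness across $k$.} Since $v\le 3N_k-1<3m$ whenever $N_k\ge1$, and $v=0$ when $N_k=0$, the value $v$ lies strictly below the spacing of the $\kappa$'s.
\begin{itemize}
\item In $G$ the spacing is $3m=q+1$, so the blocks for different $k$ are disjoint.
\item In $\bar G$ the spacing is $q=3m-1$. Here I need $kq+3N_k-1<(k+1)q$, i.e.\ $3(m-3-3k)-1<3m-1$, which clearly holds.
\end{itemize}
Hence in both cases
$$|G|=|\bar G|=\sum_{k=0}^{\lfloor (m-3)/3\rfloor}|V(N_k)|,$$
so the two sets have the same cardinality and only one count is needed.

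\textbf{Step 3: the sum.} The count equals $\sum_k 3(m-3-3k)$, plus an extra $1$ exactly when $m\equiv 0\pmod 3$; in that case the last index has $N_k=0$ and contributes $1$ rather than $0$. I will evaluate this in the three cases $m\equiv 0,1,2\pmod 3$ as an arithmetic series, or equivalently via Lemma~\ref{floor_sums_cheat_sheet}, and check it equals $g(\mathcal F_m)=(m-1)(m-2)/2$. As sanity checks, $m=4$ gives $3$ and $m=6$ gives $9+1=10$, both matching the genus.

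The main obstacle is Step 1: the reachable values are not in bijection with the parameter tuples, because there are many coincidences such as $2\cdot 3=(3\cdot 1+2)+1$. The proof must therefore describe the image set $V(N)$ exactly rather than count parameter tuples, and the interval argument above is where care is required.
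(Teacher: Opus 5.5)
Your proof is correct, and it takes a genuinely different route from the paper's.

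\textbf{How the paper argues.} The paper splits $G$ into the three subfamilies with $(\ell_0,\ell_1)\in\{(0,0),(0,1),(1,1)\}$. It counts each subfamily by parameter tuples using Lemma~\ref{floor_sums_cheat_sheet}, shows $G_1\cap G_2=G_1\cap G_3=\emptyset$, computes $|G_2\cap G_3|$, and obtains $|G_1\cup G_2\cup G_3|=g$ by inclusion--exclusion. It never shows that tuples with larger $\ell_0,\ell_1$ produce nothing new. Instead it closes with $G_1\cup G_2\cup G_3\subseteq G\subseteq G(P)$ and $|G(P)|=g$. So it borrows the upper bound $|G|\le g$ from Lemma~\ref{theyaregaps} and the gap theorem, and it leaves $\bar G$ to the reader.

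\textbf{How you argue.} You determine the image exactly. The identity $3c-v=\ell_0+\ell_1+2s$ gives $v\le 3c-1$ for $c\ge 1$, which is precisely the containment the paper sidesteps. Hence $G$ is the disjoint union over $k$ of the blocks $\kappa_k+\{1,\dots,3N_k\}$, or $\{\kappa_k+1\}$ when $N_k=0$. The same argument handles $\bar G$, since $3N_k\le 3m-9<q$.

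\textbf{What each buys.} Your count is purely combinatorial. It does not depend on the existence of the functions $f_j$ or on the Weierstrass gap theorem, it avoids all floor-sum bookkeeping, and it exhibits the interval structure of the gap set. The paper's decomposition, on the other hand, is the template it reuses in Lemma~\ref{counting-ismall}. There the extra generator $3i+3$ has cost $i+1$ and so contributes exactly $3$ per unit of cost. Consequently your bound $v\le 3c-1$ and the single-interval description per $k$ break down in that case.

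\textbf{A small bookkeeping fix in Step 1.} Consider the family $\ell_1=1$, $j=c-1-t$, $\ell_0+s=t$ with $0\le t\le c-1$.
\begin{itemize}
\item Taking $s=0$ gives the values $[2c,3c-1]$.
\item Taking $s=1$, $\ell_0=t-1$ gives the values $[2c-1,3c-3]$.
\end{itemize}
So this family covers all of $[2c-1,3c-1]$ only when $c\ge 2$. The choice $\ell_0=c-1$, $s=1$ that you cite for $v=2c-1$ must be taken with $\ell_1=0$; with $\ell_1=1$ it would force $j=-1$. That $\ell_1=0$ choice is also what produces $v=1$ when $c=1$. With this adjustment, the interval claim $V(N)=\{0,\dots,3N-1\}$ and your final count are fully justified.
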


    \begin{proof}
For simplicity, we will use the notation $v_{k,\ell_0,\ell_1,s,j} := k(q+1)  + 2 \ell_0 + (3j+2) \ell_1 + s + 1 \in G$ and $\bar{v}_{k,\ell_0,\ell_1,s,j} := kq  + 2 \ell_0 + (3j+2) \ell_1 + s + 1 \in \bar G$, respectively.
Now, the proof will consist of three steps; first we will split the sets $G$ (resp. $\bar{G}$) into three subsets (not necessarily disjoint), then we will compute the intersections of the subsets and finally the size of their union, showing that this is equal to $g(\mathcal{F}_{(q+1)/3})$. Since $G$ and $\bar G$ contain gaps, this will lead to the theorem.
Consider the following subsets of $G$:
\begin{align*}
    G_1 &= \left\{ v_{k,0,0,s,0} \mid s = 0,1,\ k \leq \fl{\frac{m-3-s}{3}} \right\} \\
    G_2 &= \left\{ v_{k,0,1,s,j} \mid s = 0,1,\ j \leq m-4-s,\ k \leq \fl{\frac{m-3-(j+1)-s}{3}} \right\} \\
    G_3 &= \left\{ v_{k,1,1,s,j} \mid s = 0,1,\ \mid j\leq m-5-s,\ k \leq \fl{\frac{m-4-(j+1)-s}{3}} \right\}. 
\end{align*}
Completely analogously, we can define subsets $\bar{G}_1,\ \bar{G}_2,\ \bar{G}_3$ of $\bar{G}$. 
Since the subsets for $\bar G$ are so similar, we will give full details of the proof of Lemma \ref{igtm2} for $G$ and leave it to the reader to check that a completely analogous proof can be carried out for $\bar G$.

We will now compute the cardinalities of the given subsets of $G$.

\textbf{Step 1: The cardinality of $G_1$.} One can easily see that different choices of the parameters $(k,s)$ give rise to different elements in $G_1$ %(resp. $\bar{G}_1$)
. Hence 
\begin{align*}
    |G_1| %= |\bar{G}_1| 
    = \sum_{s=0}^1 \left( \fl{\frac{m-3-s}{3}} + 1\right) 
    = \fl{\frac{m-3}{3}} + \fl{\frac{m-4}{3}} + 2.
    %= \fl{\frac{q+1-9}{9}} + \fl{\frac{q+1-12}{9}} + 2
    %= \fl{\frac{q-8}{9}} + \fl{\frac{q-11}{9}} + 2
\end{align*}

\textbf{Step 2: The cardinality of $G_2$% and $\bar{G}_2$
.}
Like the case for $G_1$%, $\bar{G}_1$
, one can prove by hand that the elements listed in $G_2$ % and $\bar{G}_2$ 
are pairwise distinct. 
Hence using Lemma \ref{floor_sums_cheat_sheet} we get that
\begin{align*}
    |G_2| %= |\bar{G}_2| 
    &= \sum_{s=0}^1\sum_{j=0}^{m-4-s} \left( \fl{\frac{m-3-(j+1)-s}{3}} + 1\right) \\
    %&= \sum_{j=0}^{m-4} \left( \fl{\frac{m-4-j}{3}} + 1\right) + \sum_{j=0}^{m-5} \left( \fl{\frac{m-5-j}{3}} + 1\right) \\
    &= (m-3)+(m-4)+ \sum_{j=0}^{m-4} \fl{\frac{m-4-j}{3}}+ \sum_{j=0}^{m-5} \fl{\frac{m-5-j}{3}} \\
    &= 2m-7 + \sum_{j=0}^{m-4} \fl{\frac{j}{3}}+ \sum_{j=0}^{m-5} \fl{\frac{j}{3}}= 2m-7 + \fl{\frac{m-4}{3}} + 2 \sum_{j=0}^{m-5} \fl{\frac{j}{3}} \\
    &= 2m-7 + \fl{\frac{m-4}{3}} +2\fl{\frac{(m-5)(m-6)}{6}}.
\end{align*}

\textbf{Step 3: The cardinality of $G_3$% and $\bar{G}_3$
.}
Again, one can show that there are no repetitions in $G_3$ % and $\bar{G}_3$
and we may conclude that 
\begin{align*}
    |G_3| %= |\bar{G}_3| 
    &= \sum_{s=0}^{1} \sum_{j=0}^{m-5-s}\left(\fl{\frac{m-5-j-s}{3}}+1\right)\\
    %&= \sum_{j=0}^{m-5}\left(\fl{\frac{m-5-j}{3}}+1\right) + \sum_{j=0}^{m-6}\left(\fl{\frac{m-6-j}{3}}+1\right)\\
    &= (m-4)+(m-5)+ \sum_{j=0}^{m-5}\fl{\frac{m-5-j}{3}}+ \sum_{j=0}^{m-6}\fl{\frac{m-6-j}{3}}\\
    &= 2m-9+ \sum_{j=0}^{m-5}\fl{\frac{j}{3}} + \sum_{j=0}^{m-6}\fl{\frac{j}{3}}= 2m-9 + \fl{\frac{m-5}{3}} + 2 \sum_{j=0}^{m-6} \fl{\frac{j}{3}} \\
    &= 2m-9 + \fl{\frac{m-5}{3}} +2\fl{\frac{(m-6)(m-7)}{6}}.
\end{align*}

\iffalse
last step takes the +1 out and reverses summation order. then we use the trick from the start and get
\begin{align*}
    2\sum_{j=0}^{m-6}\fl{\frac{j}{3}} 
    &= 2\left(\sum_{j=0}^{\fl{\frac{m-6}{3}}}j + \sum_{j=0}^{\fl{\frac{m-7}{3}}}j +\sum_{j=0}^{\fl{\frac{m-8}{3}}}j \right)\\
    &= 2\left( \frac{1}{2}\fl{\frac{m-6}{3}}\left(\fl{\frac{m-6}{3}}+1\right) + \frac{1}{2}\fl{\frac{m-7}{3}}\left(\fl{\frac{m-7}{3}}+1\right) +\frac{1}{2}\fl{\frac{m-8}{3}}\left(\fl{\frac{m-8}{3}}+1\right) \right)\\
    %&= \fl{\frac{q-17}{9}}\left(\fl{\frac{q-17}{9}}+1\right) + \fl{\frac{q-20}{9}}\left(\fl{\frac{q-20}{9}}+1\right) +\fl{\frac{q-23}{9}}\left(\fl{\frac{q-23}{9}}+1\right)
\end{align*}
giving us 
\begin{align*}
    |G_3| %= |\bar{G}_3| 
    &= 2m-9+ \fl{\frac{m-5}{3}} + \fl{\frac{m-6}{3}}\left(\fl{\frac{m-6}{3}}+1\right) + \fl{\frac{m-7}{3}}\left(\fl{\frac{m-7}{3}}+1\right) +\fl{\frac{m-8}{3}}\left(\fl{\frac{m-8}{3}}+1\right)
\end{align*}
\fi

\textbf{Step 4: Intersections of the subsets $G_i$} Our aim is to show that $G_1 \cap G_2  = G_1 \cap G_3  = \emptyset$ and compute the cardinality of $G_2 \cap G_3$.
In order to show that $G_1 \cap G_2 = \emptyset$, assume contrarily that there are elements $v_{k,0,0,s,0} \in G_1$ and $v_{\tilde{k},0,1,\tilde{s},\tilde{j}} \in G_2$ such that
\begin{align*}
    k(q+1) + s + 1 = \tilde{k}(q+1) + (3\tilde{j}+2) + \tilde{s} + 1.
\end{align*}
Then $s\equiv \tilde{s} +2\m{3}$ implying that $ s\equiv \tilde{s} -1\m{3}$, in particular $(s,\tilde{s})=(0,1)$. Thus
\begin{align*}
    k(q+1) = \tilde{k}(q+1) + 3\tilde{j}+3
    \iff km = \tilde{k}m + \tilde{j} + 1,
\end{align*}
and we see that $\tilde{j}+1\equiv 0\m{m}$. However, $\tilde{j} +1\leq m-4-\tilde{s}+1 = m-4$ is thus a contradiction, and we conclude that $G_1 \cap G_2 = \emptyset$. 
The strategy to show that $G_1 \cap G_3 = \emptyset$ is very similar and therefore will be omitted.
What is left is to compute $G_2 \cap G_3$ in order to compute the total sum.
Let us therefore first assume the existence of two elements $v_{k,0,1,s,j}\in G_2$ and $v_{\tilde{k},1,1,\tilde{s},\tilde{j}} \in G_3$ such that
\begin{align*}
    k(q+1) + 3j + 2 + s + 1 = \tilde{k}(q+1) + 2 + 3\tilde{j} + 2 + \tilde{s} + 1.
\end{align*}
We see that  $s \equiv 2 +\tilde{s} \m{3}$, so $(s,\tilde{s})=(0,1)$. Then
\begin{align*}
     k(q+1) + 3j= \tilde{k}(q+1) + 3\tilde{j} + 3
     \iff km + j= \tilde{k}m + \tilde{j} + 1,
\end{align*}
so $j \equiv \tilde{j}+1\m{m}$, in fact $j=\tilde{j}+1$, since $j,\tilde{j}<m$.
This implies that $k=\tilde{k}$, so we can obtain a new bound, namely $k,\tilde{k} \leq \fl{\frac{m-3-(\tilde{j}+1)-\tilde{s}-1}{3}} = \fl{\frac{m-5-(\tilde{j}+1)}{3}} = \fl{\frac{m-5-j}{3}}$. We can also obtain a new bound for $j$, namely $1 \leq \tilde{j}+1 = j \leq m-5-\tilde{s}+1=m-5$.
So the elements in $G_2 \cap G_3$ must be the elements in $G_2$ that satisfies these new bounds, i.e.
\begin{align*}
    G_2 \cap G_3 = \left\{ v_{k,0,1,0,j} \mid j = 1,\ldots, m-5,\ k = 0, \ldots, \fl{\frac{m-5-j}{3}} \right\}.
\end{align*}

%We can compute $\bar{G}_2 \cap \bar{G}_3$ quite similarly, so let $\bar{v}_{k,0,1,s,j}\in \bar{G}_2$ and $\bar{v}_{\tilde{k},1,1,\tilde{s},\tilde{j}} \in \bar{G}_3$ such that
%\begin{align*}
%    kq + 3j + 2 + s + 1 = \tilde{k}q + 2 + 3\tilde{j} + 2 + \tilde{s} + 1.
%\end{align*}
%Arguing as before, we see that $3j + s = 2 + 3\tilde{j} + \tilde{s}$, so $(s,\tilde{s})=(0,1)$. This implies that $kq + 3j = \tilde{k}q + 3\tilde{j} + 3$, so $3j \equiv 3\tilde{j} + 3 \m{q}$, in fact $j =\tilde{j}+1$. From here on the argument is the same as in the rational case, and we can compute both $|G_2 \cap G_3|$ and $|\bar{G}_2 \cap \bar{G}_3|$ in one go. 
Note that since $(G_2 \cap G_3) \subseteq G_2$ %and $(\bar{G}_2 \cap \bar{G}_3) \subseteq \bar{G}_2$
, there cannot be repetitions in either of the two sets, so we can compute the size as
\begin{align*}
    |G_2 \cap G_3| &= \sum_{j=1}^{m-5} \left(\fl{\frac{m-5-j}{3}} + 1\right)
    = \sum_{j=0}^{m-6} \fl{\frac{j}{3}} + (m-5)\\
    &=m-5+\fl{\frac{(m-6)(m-7)}{6}}.
    %= \sum_{j=0}^{\fl{\frac{m-6}{3}}} j + \sum_{j=0}^{\fl{\frac{m-7}{3}}} j + \sum_{j=0}^{\fl{\frac{m-8}{3}}} j  + (m-5)\\
    %&= \frac{1}{2} \fl{\frac{m-6}{3}} \left(\fl{\frac{m-6}{3}}+1\right) + \frac{1}{2} \fl{\frac{m-7}{3}} \left(\fl{\frac{m-7}{3}}+1\right) + \frac{1}{2} \fl{\frac{m-8}{3}} \left(\fl{\frac{m-8}{3}}+1\right) + (m-5)\\
    %&= \frac{1}{2} \fl{\frac{q-17}{9}} \left(\fl{\frac{q-17}{9}}+1\right) + \frac{1}{2} \fl{\frac{q-20}{9}} \left(\fl{\frac{q-20}{9}}+1\right) + \frac{1}{2} \fl{\frac{q-23}{9}} \left(\fl{\frac{q-23}{9}}+1\right) j + \frac{q-14}{3}.
\end{align*}
Now we are ready to complete our proof by adding all the contributions together.

\textbf{Step 5: The cardinality of $G_1 \cup G_2 \cup G_3$.}
As discussed at the beginning of the proof, we wish to show that $|G_1 \cup G_2 \cup G_3|=g(F_m)$.

\begin{align*}
    |G_1 \cup G_2 \cup G_3| &= |G_1| + |G_2| + |G_3| - |G_2 \cap G_3| \\
    &=\fl{\frac{m-3}{3}} + \fl{\frac{m-4}{3}} + 2+2m-7 + \fl{\frac{m-4}{3}} +2\fl{\frac{(m-5)(m-6)}{6}}\\
    &+ 2m-9 + \fl{\frac{m-5}{3}} +2\fl{\frac{(m-6)(m-7)}{6}}-\bigg( m-5+\fl{\frac{(m-6)(m-7)}{6}}\bigg)\\
    &=4-m+\fl{\frac{m}{3}}+2\fl{\frac{m-1}{3}}+\fl{\frac{m-2}{3}}+2\fl{\frac{m(m-5)}{6}}+\fl{\frac{m(m-1)}{6}}\\
    &=\begin{cases} 4-m+\frac{m}{3}+3\frac{m-3}{3}+2\frac{m(m-5)}{6}+\frac{m(m-1)}{6}, & \text{if} \ m \equiv 0 \pmod 3, \\
    4-m+3\frac{m-1}{3}+\frac{m-4}{3}+2\frac{m^2-5m-2}{6}+\frac{m(m-1)}{6}, & \text{if} \ m \equiv 1 \pmod 3,\\
4-m+4\frac{m-2}{3}+2\frac{m(m-5)}{6}+\frac{m^2-m-2}{6}, & \text{if} \ m \equiv 2 \pmod 3\end{cases}\\
&%=\frac{3m^2-9m+6}{6}
=\frac{(m-1)(m-2)}{2}=g(\mathcal{F}_m).
\end{align*}
We can now conclude that $|G| = |G_1 \cup G_2 \cup G_3| = g(\mathcal{F}_{(q+1)/3})$. 
\end{proof}

\subsubsection{Counting of gaps for small $\mathcal P$-order}\label{counting_gaps_for_small_P_orders}

We will now proceed with the proof of Theorem \ref{conj:gaps} in case $m \geq i+2$. Our aim is to prove the following lemma.

\begin{lemma} \label{counting-ismall}
  Let $i \leq m-2$, then the sets   
    \begin{align*}
        G:=
        \left\{  k(q+1) + \ell_0\cdot 2 + \ell_1 \cdot (3j+2) + \ell_2\cdot(3i+3) + s + 1\
        \middle\vert\ k,\ell_0,\ell_1,\ell_2, s \geq 0,\ s =0,1, \right.\\
        \left.  \text{and }\ j \leq i-1,\ 3k + \ell_0 + \ell_1(j+1)+ \ell_2 (i+1)+ s\leq m-3
         \right\}
    \end{align*}
 and 
    \begin{align*}
        \bar G
        :=
        \left\{  kq + \ell_0\cdot 2 + \ell_1 \cdot (3j+2) + \ell_2\cdot(3i+3) + s + 1
        \middle\vert\ k,\ell_0,\ell_1,\ell_2, s \geq 0,\ s =0,1,\ j \leq i-1,\right. \\
        \left. \text{and }\ 3k + \ell_0 + \ell_1(j+1)+ \ell_2 (i+1)+ s\leq m-3
        \right\}
    \end{align*}
both have cardinality equal to $g(\mathcal{F}_{(q+1)/3})$.
 \end{lemma}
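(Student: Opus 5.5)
The plan is to reduce to the large $\mathcal P$-order case as far as possible. By Lemma~\ref{theyaregaps}, $G\subseteq G(P)$ (resp.\ $\bar G\subseteq G(P)$), and the Weierstrass gap theorem gives $|G(P)|=g(\mathcal{F}_m)$. So it is enough to exhibit $g(\mathcal{F}_m)$ pairwise distinct elements of $G$ (resp.\ $\bar G$); the reverse inequality is then automatic. As in Lemma~\ref{igtm2}, I will give details for $G$ and treat $\bar G$ analogously, replacing $q+1$ by $q$.

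The key observation is a substitution that turns the large-order parameter set into the small-order one. Take any $j\le m-2$ and write $j=r(i+1)+j'$ with $0\le j'\le i$.
\begin{itemize}
\item If $j'\le i-1$, the product $f_i^{r}f_{j'}$ has valuation $3r(i+1)+3j'+2=3j+2$ at $P$ and pole order at most $r(i+1)+j'+1=j+1$ along $D_\infty$. These are exactly the data of the missing function $f_j$, so it can replace $f_j$.
\item If $j'=i$, the product $f_i^{r+1}$ has valuation $3j+3$ and pole order at most $j+1$.
\end{itemize}
Hence every parameter tuple $(k,\ell_0,\ell_1,s,j)$ from Lemma~\ref{igtm2} yields an element of $G$: its value is either $v_{k,\ell_0,\ell_1,s,j}$ unchanged (when $\ell_1=0$ or $j\not\equiv i \pmod{i+1}$), or that value increased by $\ell_1$ (when $j\equiv i\pmod{i+1}$). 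The corresponding differential is still holomorphic because the pole bound is unchanged. In particular, only the subsets $G_2,G_3$ of Lemma~\ref{igtm2}, i.e.\ $\ell_1=1$, are affected, and only for $j\equiv i\pmod{i+1}$.

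I would then reuse the decomposition $G_1\cup G_2\cup G_3$ of Lemma~\ref{igtm2}, with modified sets $G_2',G_3'$ in which the entries with $j\equiv i \pmod{i+1}$ become $k(q+1)+3j+3+s+1$ and $k(q+1)+2+3j+3+s+1$ respectively. I will show that the map from old to new elements is injective within each set: the residue mod $3$ together with the size bound $<q+1$ of the non-$F_P$ part recovers $(k,j,s)$ as before. I will also recompute the pairwise intersections, with the aim of showing that $|G_1\cup G_2'\cup G_3'|=|G_1\cup G_2\cup G_3|=g(\mathcal F_m)$. Concretely, a shifted element $k(q+1)+3(j+1)+s+1$ now has residue $s+1$ mod $3$ instead of $s+2$. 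It can therefore only collide with $G_1$, with unshifted elements of $G_2'$, or with elements of $G_3'$ having a different $s$. I will check each case using the constraints $j\le m-4-s$, $\tilde\jmath\le m-5-\tilde s$ and the $k$-bounds.

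The main obstacle I expect is precisely this collision analysis. The shift by $+1$ can map a value onto one previously realized with $s=1$, so the intersection $G_2'\cap G_3'$ changes and new overlaps with $G_1$ may appear. If the bookkeeping does not balance exactly, my fallback is to compensate by adding other elements of $G$. These include products using $f_i^{\ell_2}$ with $\ell_2\ge 2$ together with $f_0$ or $x_a$, which the restricted sets $G_1,G_2,G_3$ did not use. I would recount by residue class mod $3$ and by $k$: for fixed $k$, the realizable non-$F_P$ valuations of each residue form explicit intervals, and I would show these intervals have the same total length as in the large-order case.
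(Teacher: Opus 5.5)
Your route is different from the paper's and it does work. However, the step you defer as ``the main obstacle'' is the whole content of the count, and as written you have not carried it out; you even allow for it to fail. It balances exactly, so the fallback is unnecessary. The cleanest way to see this is slice by slice.

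Fix $k$ and put $N=m-3-3k$. Every non-$F_P$ part is at most $3N\le 3(m-3)<q$, so elements with different $k$ never coincide, in $G$ as well as in $\bar G$. In the large-order decomposition, the non-$F_P$ parts occurring in $G_1\cup G_2\cup G_3$ for this $k$ are:
\begin{itemize}
\item $s$ with $s\le N$;
\item $3t+2+s$ with $t\le N-1-s$;
\item $3t+4+s$ with $t\le N-2-s$.
\end{itemize}
These are exactly $\{0,1,\dots,3N-1\}$ if $N\ge 1$, and $\{0\}$ if $N=0$. Call $t$ bad if $t\equiv i \pmod{i+1}$. Since $i\ge 1$, $0$ is good and no two consecutive integers are bad. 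After your substitution, $0$ and $1$ still come from $G_1$, and for $0\le t\le N-2$:
\begin{itemize}
\item $3t+3$ comes from $G_2'$ with $s=1$ if $t$ is good, and from $G_2'$ with $s=0$ if $t$ is bad;
\item $3t+4$ comes from $G_3'$ with $s=0$ if $t$ is good, and from $G_2'$ with $s=1$ if $t$ is bad;
\item $3u+2$ comes from $G_2'$ with $s=0$ if $u$ is good, and from the unshifted element of $G_3'$ with $s=1$, $j=u-1$ if $u$ is bad and $u\le N-2$.
\end{itemize}
Hence the only change in the slice is that $3N-1$ is replaced by $3N$ (coming from $f_i^{N/(i+1)}$) when $(i+1)\mid N$. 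Each slice keeps $3N$ elements, so the total is still $g(\mathcal{F}_m)$.

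Two small corrections to your sketch. First, after the shift the residue mod $3$ no longer recovers $s$ inside $G_2'$: a good $j$ with $s=1$ and a bad $j$ with $s=0$ both give non-$F_P$ part $\equiv 0$. Injectivity still holds, because a collision would force the two to have the same $j$. Second, no new overlaps with $G_1$ can arise, since shifted non-$F_P$ parts are at least $3$.

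Compared with the paper: the paper never relates the small-order set to the large-order one. It splits $G$ into six families $G_{1a},\dots,G_{3b}$, according to $\ell_2=0$ or $\ell_2\ge 1$ and the presence of $f_j$ and $f_0$. It shows the cross intersections are empty by comparing the non-$F_P$ part with $3i+3$, and computes $G_{2a}\cap G_{3a}$ and $G_{2b}\cap G_{3b}$. It then checks that the total is $g$ through four cases on the position of $i$ relative to $m$, using floor-sum identities for the auxiliary sums $S(j)$.

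Your substitution $f_j\mapsto f_i^{r}f_{j'}$ (resp.\ $f_i^{r+1}$) instead transfers the count from Lemma~\ref{igtm2}. It only requires tracking the $+1$ shifts at $j\equiv i\pmod{i+1}$, so it avoids both the case distinction and the floor-sum identities, and it yields an explicit description of the gap set. Moreover, writing each non-$F_P$ part as $3p-(\ell_0+\ell_1+2s)$, with $p\le N$ the pole order, shows that it equals $3N$ only for a pure power of $f_i$, and $3N-1$ only if $(i+1)\nmid N$. So the same slice picture also gives $|G|\le g(\mathcal{F}_m)$ without invoking the gap theorem.
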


As in the previous, we divide $G$ into subsets and compute their sizes and intersections. 
Denote by $v_{k,\ell_0,\ell_1,\ell_2, s, j}=k(q+1) + \ell_0\cdot 2 + \ell_1 \cdot (3j+2) + \ell_2\cdot(3i+3) + s + 1$, then we define the following subsets:
\begin{align*}
    G_{1a} &= \left\{v_{k,0,0,0, s, 0} \middle\vert\ 
        \begin{array}{l}
            0\leq s \leq 1,\quad 0\leq k, \text{ and } \\
            3k + s \leq m-3.
        \end{array}
        \right\},
\\[2ex]
    G_{2a} &= \left\{v_{k,0,1,0, s, j} \middle\vert\ 
        \begin{array}{l}
            0\leq s \leq 1,\quad 0 \leq \ j \leq i-1, \quad 0\leq k, \text{ and } \\
            3k + (j+1) + s \leq m-3.
        \end{array}
        \right\},
\\[2ex]
    G_{3a} &= \left\{v_{k,1,1,0, s, j} \middle\vert\ 
        \begin{array}{l}
            0\leq s \leq 1, \quad 0 \leq \ j \leq i-1, \quad 0\leq k, \text{ and } \\
            3k + (j+1) + s \leq m-4.
        \end{array}
        \right\},
\\[2ex]
    G_{1b} &= \left\{v_{k,0,0,\ell_2, s, 0} \middle\vert\ 
        \begin{array}{l}
            0\leq s \leq 1, \quad 0\leq k, \quad 1\leq \ell_2, \text{ and } \\
            3k + (i+1)\ell_2 + s \leq m-3.
        \end{array}
        \right\},
\\[2ex]
    G_{2b} &= \left\{v_{k,0,1,\ell_2, s, j} \middle\vert\ 
        \begin{array}{l}
            0\leq s \leq 1, \quad 0 \leq \ j \leq i-1, \quad 0\leq k, \quad 1\leq \ell_2, \text{ and } \\
            3k + (j+1) + (i+1)\ell_2 + s \leq m-3.
        \end{array}
        \right\},
\\[2ex]
    G_{3b} &= \left\{v_{k,1,1,\ell_2, s, j} \middle\vert\ 
        \begin{array}{l}
            0\leq s \leq 1, \quad 0 \leq \ j \leq i-1, \quad 0\leq k,\quad 1 \leq \ell_2, \text{ and } \\
            3k + (j+1) + (i+1)\ell_2 + s \leq m-4.
        \end{array}
        \right\}.
\end{align*}
\begin{remark}\label{rem:GandGbar}
    All the definitions above can be modified to obtain analogue subsets of $\bar G$ (similar to the situation in Lemma \ref{igtm2}), so let $\bar{G}_{xa},\ \bar{G}_{xb}$ for $x=1,2,3$ be defined in the obvious way.
The strategy at this point will be the same as for Lemma \ref{igtm2}. Noting that that the subsets presented above (resp. their analogues in $\bar G$) do not contain repetitions for different choices of the parameters $k,s,\ell_1\ell_2$ and $j$, computing their cardinalities as well as the cardinality of their mutual intersections can be done by counting the number of choices of these parameters. This implies that $|G_{xa}|=|\bar{G}_{xa}|$ and $|G_{xb}|=|\bar{G}_{xb}|$ for $x=1,2,3$, and similar statements hold for the cardinalities of their mutual intersections. We will therefore from now on only state and prove the cardinalities of $G_{xa}, G_{xb}$,  and several of their intersections, but we will in fact then also have proven analogous formulas for the cardinalities of the sets $\bar{G}_{xa}, \bar{G}_{xb}$ and their intersections.
\end{remark}

Remark \ref{rem:GandGbar} implies that we only need to prove Lemma \ref{counting-ismall} for the set $G$, the case $\bar{G}$ being very similar. We follow the same approach as in the proof of Lemma \ref{igtm2}. Computing the cardinalities $|G_{xa}|$ %(and hence also the cardinalities $|\bar{G}_{xa}|$) 
will be very similar to what we did in the proof of Lemma \ref{igtm2}. However, the cardinalities $|G_{xb}|$ %(and hence also $|\bar{G}_{xb}|$) 
depend more heavily on the relation between $m$ and $i$ and will therefore be computed on a case by case basis. 

\textbf{Step 1: The cardinality $|G_{xa}|$.}

Since $G_{1a}$ does not depend on $j$, we see that 
$$|G_{1a}| = |G_1|=\fl{\frac{m-3}{3}} + \fl{\frac{m-4}{3}} + 2.$$
For the rest of the sets (e.g. $x=2,3$), the bound on $j$ has an influence. Note that since in $G_{2a}$ we need that $k \geq 0$ and $3k + (j+1) + s \leq m-3$, we see that $3k + (j+1) \le m-3$ or in other words: $j \le m-4-s$. Since also $j \leq i-1$, we see that $j \le \min\{i-1,m-4-s\}$. All in all we get:
%Thus whenever $m-4 < i$ (i.e. $m=i+2$ or $m=i+3$), we have $|G_{2a}| = |\bar{G}_{2a}| = |G_2|$, whilst if $m-4 \geq i$, we get
%\begin{align*}
%    |G_{2a}| = |\bar{G}_{2a}| 
%    &= \sum_{j=0}^{i-1} \fl{\frac{m-(j+1)}{3}}  + \sum_{j=0}^{i-1} \fl{\frac{m-(j+2)}{3}} \\
%    &= \sum_{j=1}^{i} \fl{\frac{m-j}{3}}  + \sum_{j=2}^{i+1} \fl{\frac{m-j}{3}} \\
%    &= 2\sum_{j=1}^{i} \fl{\frac{m-j}{3}}  - \fl{\frac{m-1}{3}} + \fl{\frac{m-(i+1)}{3}}.
%\end{align*}
\begin{align*}
    |G_{2a}| 
    &= \sum_{s=0}^1\sum_{j=0}^{\min\{i-1,m-4-s\}} \left( \fl{\frac{m-4-j-s}{3}} + 1\right)\\
&=\min\{i-1,m-4\}+\min\{i-1,m-5\}+2+\sum_{s=0}^1\sum_{j=0}^{\min\{i-1,m-4-s\}} \fl{\frac{m-4-j-s}{3}}\\
%&=\min\{i-1,m-4\}+\min\{i-1,m-5\}+2+\\
%&+\sum_{j=0}^{\min\{i-1,m-4\}} \fl{\frac{m-4-j}{3}}+\sum_{j=0}^{\min\{i-1,m-5\}} \fl{\frac{m-5-j}{3}}\\
&=\min\{i-1,m-4\}+\min\{i-1,m-5\}+2+\hspace{-7ex}\sum_{t=m-4-\min\{i-1,m-4\}}^{m-4} \fl{\frac{t}{3}}+\hspace{-2.5ex}\sum_{t=m-5-\min\{i-1,m-5\}}^{m-5} \fl{\frac{t}{3}}\\
&=\begin{cases} 2i+\sum_{t=m-3-i}^{m-4} \fl{\frac{t}{3}}+\sum_{t=m-4-i}^{m-5} \fl{\frac{t}{3}}, & \hspace{-1.5ex}\text{if} \ i \leq m-4\\
2m-7+\sum_{t=0}^{m-4} \fl{\frac{t}{3}}+\sum_{t=0}^{m-5} \fl{\frac{t}{3}}, & \hspace{-1.5ex}\text{if} \ i  \ge m-3\end{cases}\\
%&=\begin{cases} 2i+\fl{\frac{m-4}{3}}+\fl{\frac{m-4-i}{3}}+2\fl{\frac{(m-5)(m-6)}{6}}-2\fl{\frac{(m-4-i)(m-5-i)}{6}}, & \text{if} \ i \leq m-4\\
%2m-7+\fl{\frac{m-4}{3}}+2\fl{\frac{(m-5)(m-6)}{6}}, & \text{if} \ i \in \{m-3,m-2\}\end{cases}\\
&=\begin{cases} 2i+\fl{\frac{m-4}{3}}+\fl{\frac{m-4-i}{3}}+2\fl{\frac{(m-5)(m-6)}{6}}-2\fl{\frac{(m-4-i)(m-5-i)}{6}}, & \hspace{-2ex}\text{if} \ i \leq m-4\\
|G_2|, & \hspace{-2ex}\text{if} \ i \ge m-3.
\end{cases}\\
&=\begin{cases} |G_2|-2m+7+2i+\fl{\frac{m-4-i}{3}}-2\fl{\frac{(m-4-i)(m-5-i)}{6}}, & \hspace{-2ex}\text{if} \ i \leq m-4\\
|G_2|, & \hspace{-2ex}\text{if} \ i \ge m-3.
\end{cases}\\
\end{align*}
Note that since we are assuming thoughout at this point that $i \le m-2$, the second case only occurs for $i \in \{m-3,m-2\}$.
Similarly

\begin{align*}
|G_{3a}|  
   & = \sum_{s=0}^1\sum_{j=0}^{\min\{i-1,m-5-s\}} \left( \fl{\frac{m-5-j-s}{3}} + 1\right)\\
&=\begin{cases} 2i+\fl{\frac{m-5}{3}}+\fl{\frac{m-5-i}{3}}+2\fl{\frac{(m-6)(m-7)}{6}}-2\fl{\frac{(m-5-i)(m-6-i)}{6}}, & \text{if} \  i \leq m-5\\
|G_3|, & \text{if}  \ i \ge m-4.
\end{cases}\\
&=\begin{cases} |G_3|-2m+9+2i+\fl{\frac{m-5-i}{3}}-2\fl{\frac{(m-5-i)(m-6-i)}{6}}, & \text{if} \  i \leq m-5\\
|G_3|, & \text{if}  \ i \ge m-4.
\end{cases}
\end{align*}

%\begin{align*}
%    |G_{3a}| & 
%    = \sum_{s=0}^1\sum_{j=0}^{\min\{i-1,m-5-s\}} \left( \fl{\frac{m-5-j-s}{3}} + 1\right)  \\
%&=\min\{i-1,m-5\}+\min\{i-1,m-6\}+2+\\
%&+\sum_{j=0}^{\min\{i-1,m-5\}} \fl{\frac{m-5-j}{3}}+\sum_{j=0}^{\min\{i-1,m-6\}} \fl{\frac{m-6-j}{3}}\\
%&=\min\{i-1,m-5\}+\min\{i-1,m-6\}+2+\\
%&+\sum_{t=m-5-\min\{i-1,m-5\}}^{m-5} \fl{\frac{t}{3}}+\sum_{t=m-6-\min\{i-1,m-6\}}^{m-6} \fl{\frac{t}{3}}\\
%&=\begin{cases} 2i+\sum_{t=m-4-i}^{m-5} \fl{\frac{t}{3}}+\sum_{t=m-5-i}^{m-6} \fl{\frac{t}{3}}, \ if \ i \leq m-5\\
%2m-9+\sum_{t=0}^{m-5} \fl{\frac{t}{3}}+\sum_{t=0}^{m-6} \fl{\frac{t}{3}}, \ if \ i \geq m-4\end{cases}\\
%&=\begin{cases} 2i+\fl{\frac{m-5}{3}}+\fl{\frac{m-5-i}{3}}+2\fl{\frac{(m-6)(m-7)}{6}}-2\fl{\frac{(m-5-i)(m-6-i)}{6}}, \ if \ i \leq m-5\\
%2m-9+\fl{\frac{m-5}{3}}+2\fl{\frac{(m-6)(m-7)}{6}}, \ if \ i \in \{m-4,m-3,m-2\}.\end{cases}\\
%&=\begin{cases} 2i+\fl{\frac{m-5}{3}}+\fl{\frac{m-5-i}{3}}+2\fl{\frac{(m-6)(m-7)}{6}}-2\fl{\frac{(m-5-i)(m-6-i)}{6}}, & \hspace{-2ex}\text{if} \  i \leq m-5\\
%|G_3|, & \hspace{-2ex}\text{if}  \ i \in \{m-4,m-3,m-2\}.\end{cases} \\
%\end{align*}
%We will come back to these expressions later. 

\textbf{Step 2: The cardinality of $G_{xb}$.}

When computing the cardinalities of $G_{xb}$ a particular type of summation will occur all the time, as we will see shortly. For this reason, we introduce the following notation:

$$S(j):=\sum_{\ell_2=1}^{\fl{\frac{m-j}{i+1}}}\left(\fl{\frac{m-j-(i+1)\ell_2}{3}}+1\right).$$
Starting with $G_{1b}$ we see that for a given $s=0,1$ then $\ell_2 \leq \fl{\frac{m-3-s}{i+1}}$ is necessary, while by construction $\ell_2 \geq 1$. This implies that for $i >m-4-s$ we would have no contribution in $|G_{1b}|$ (the two aforementioned bounds contradict each other). This implies that
\\

\begin{align*}|G_{1b}| & = \begin{cases} \sum_{s=0}^1 \sum_{\ell_2=1}^{\fl{\frac{m-3-s}{i+1}}}\bigg(\fl{\frac{m-3-s-(i+1)\ell_2}{3}}+1\bigg), & \text{if} \ i \leq m-5,\\
1, & \text{if} \ i=m-4,\\
0, & \text{if} \ i \geq m-3\end{cases}\\
& = \begin{cases} S(3)+S(4), & \text{if} \ i \leq m-5,\\
1, & \text{if} \ i=m-4,\\
0, & \text{if} \ i \ge m-3.
\end{cases}
\end{align*}

%\noindent
%$=\begin{cases} \fl{\frac{m-3}{i+1}}+\fl{\frac{m-4}{i+1}}+\sum_{\ell_2=1}^{\fl{\frac{m-3}{i+1}}}\fl{\frac{m-3-(i+1)\ell_2}{3}}+\sum_{\ell_2=1}^{\fl{\frac{m-4}{i+1}}}\fl{\frac{m-4-(i+1)\ell_2}{3}}, & \text{if} \ i \leq m-5,\\
%1, & \text{if} \ i=m-4,\\
%0, & \text{if} \ i \ge m-3.
%\end{cases}$
%\\

We now consider $G_{2b}$. We see that for a given $s=0,1$ then $j \leq m-4-s$ is necessary, while by construction $j \geq 1$. This implies that $0 \leq j \leq \min\{i-1,m-4-s\}$. Similarly when $j$ is fixed, then $\ell_2=1,\ldots,\fl{\frac{m-4-s-j}{i+1}}$ which makes sense only if $m-4-s-j \geq i+1$, that is, $j \leq m-5-s-i$. This forces $0 \leq j \leq \min\{i-1,m-4-s,m-5-s-i\}=\min\{i-1,m-5-s-i\}$. If $i \geq m-4-s$ then this minimum is negative, which is not possible. Hence for a fixed $s=0,1$ we possibly have a contribution in $|G_{2b}|$ only if $i \leq m-5-s$. If this is the case, and $j=0,\ldots, \min\{i-1,m-5-s-i\}$ is fixed then necessarily $k=0,\ldots,\fl{\frac{m-4-s-j-\ell_2(i+1)}{3}}$. This gives

$$|G_{2b}|=\begin{cases} \sum_{s=0}^1 \sum_{j=0}^{\min\{i-1,m-5-s-i\}}S(j+4+s)
%\sum_{\ell_2=1}^{\fl{\frac{m-4-s-j}{i+1}}}\bigg(\fl{\frac{m-4-s-j-(i+1)\ell_2}{3}}+1\bigg)
, & \text{if} \ i \leq m-6,\\
1, & \text{if} \ i=m-5,\\
0, & \text{if} \ i\ge m-4.\\
%\in \{m-2,m-3,m-4\}.
\end{cases}$$

Finally we consider $G_{3b}$. Arguing as in the previous case we see that for a given $s=0,1$ then $0 \leq j \leq \min\{i-1,m-6-s-i\}$ and $\ell_2=1,\ldots,\fl{\frac{m-7-s-j}{i+1}}$. If $i \geq m-6-s$ then $\min\{i-1,m-6-s-i\}=m-6-s-i <0$, which is not possible. Hence for a fixed $s=0,1$ we possibly have a contribution in $|G_{3b}|$ only if $i \leq m-6-s$. If this is the case, and $j=0,\ldots, \min\{i-1,m-6-s-i\}$ is fixed then necessarily $k=0,\ldots,\fl{\frac{m-5-s-j-\ell_2(i+1)}{3}}$. This gives

$$|G_{3b}|=\begin{cases} \sum_{s=0}^1 \sum_{j=0}^{\min\{i-1,m-6-s-i\}} S(j+5+s)
%\sum_{\ell_2=1}^{\fl{\frac{m-5-s-j}{i+1}}}\bigg(\fl{\frac{m-5-s-j-(i+1)\ell_2}{3}}+1\bigg)
, & \text{if} \ i \leq m-7,\\
1, & \text{if} \ i=m-6,\\
0, & \text{if} \ i \ge m-5.\\ 
%\in \{m-2,m-3,m-4,m-5\}
\end{cases}$$

\textbf{Step 3: Cardinality of $\left( \bigcup_{x=1,2,3} G_{xa} \right) \cap \left( \bigcup_{x=1,2,3} G_{xb} \right)$}
The first intersection we will compute is $\left( \bigcup_{x=1,2,3} G_{xa} \right) \cap \left( \bigcup_{x=1,2,3} G_{xb} \right)$, in particular our goal will be to show that this intersection is in fact empty. 
Assume therefore contrarily that there exists two elements $v_{k,\ell_0,\ell_1,0,s,j}\in\left( \bigcup_{x} G_{xa} \right)$ and $v_{\Tilde{k},\Tilde{\ell_0},\Tilde{\ell_1},\ell_2,\Tilde{s},\Tilde{j}}\in \left( \bigcup_x G_{xb} \right)$ with $\ell_2\geq 1$ such that 
\begin{align*}
    k(q+1) + \ell_0\cdot 2 + \ell_1 \cdot (3j+2) + s + 1 &= \Tilde{k}(q+1) + \Tilde{\ell_0}\cdot 2 + \Tilde{\ell_1} \cdot (3\Tilde{j}+2) +\ell_2 (3i+3) + \Tilde{s} + 1.
\end{align*}
If we consider this modulo $q+1$, we get that 
\begin{align*}
    \ell_0\cdot 2 + \ell_1 \cdot (3j+2) + s \equiv \Tilde{\ell_0}\cdot 2 + \Tilde{\ell_1} \cdot (3\Tilde{j}+2) +\ell_2 (3i+3) + \Tilde{s}  \mod (q+1).
\end{align*}
Since both sides are smaller than $q+1$ we get
\begin{align*}
    \ell_0\cdot 2 + \ell_1 \cdot (3j+2) + s + 1 = \Tilde{\ell_0}\cdot 2 + \Tilde{\ell_1} \cdot (3\Tilde{j}+2) +\ell_2 (3i+3) + \Tilde{s} + 1.
\end{align*}
Now, this is what we need to reach a contradiction. Consider first the left side
\begin{align*}
    \ell_0\cdot 2 + \ell_1 \cdot (3j+2) + s + 1 \leq 2 + 3j+2 + 1 + 1 = 3j + 6 \leq 3(i-1)+6 = 3i+3.
\end{align*}
On the other hand, since $\ell_2 \geq 1$, we have that
\begin{align*}
    \Tilde{\ell_0}\cdot 2 + \Tilde{\ell_1} \cdot (3\Tilde{j}+2) +\ell_2 (3i+3) + \Tilde{s} +1 > 3i+3,
\end{align*}
a contradiction. We can therefore conclude that there cannot exist an element in $\left( \bigcup_{x=1,2,3} G_{xa} \right) \cap \left( \bigcup_{x=1,2,3} G_{xb} \right)$, and it is in fact the empty set. 
The argument for showing that %$\left( \bigcup_{x=1,2,3} \bar G_{xa} \right) \cap \left( \bigcup_{x=1,2,3} \bar G_{xb} \right)=\emptyset$, 
$G_{1a}\cap G_{2a} = G_{1a}\cap G_{3a} = \emptyset$ and $G_{1b}\cap G_{2b}= G_{1b}\cap G_{3b} = \emptyset$ is analogous to the above, and is omitted. 

Furthermore, the argument used when computing $G_{2} \cap G_{3}$ can be reused to finding $G_{2a} \cap G_{3a}$, only with the addition of a stricter bound on $j$. Hence
$$
    |G_{2a} \cap G_{3a}| = 
    \sum_{j=1}^{\min\{i-1,m-5\}} \left(\fl{\frac{m-5-j}{3}} +1 \right)
$$
\begin{align*}
 = & \min\{i-1,m-5\}+\fl{\frac{(m-6)(m-7)}{6}}\\
 &-\fl{\frac{(m-6-\min\{i-1,m-5\})(m-7-\min\{i-1,m-5\})}{6}}\\
 =& |G_2 \cap G_3|-m+5+\min\{i-1,m-5\}\\   
  &-\fl{\frac{(m-6-\min\{i-1,m-5\})(m-7-\min\{i-1,m-5\})}{6}}.\\
\end{align*}

The last intersection we will consider is $G_{2b}\cap G_{3b}$, so let $v_{k,0,1,\ell_2,s,j}\in G_{2b}$ and $v_{\Tilde{k},1,1,\Tilde{\ell_2},\Tilde{s},\Tilde{j}}\in G_{3b}$ with $\ell_2\geq 1$ such that
\begin{align*}
    k(q+1) + (3j+2) + \ell_2 (3i+3)+ s + 1 = \Tilde{k}(q+1) + 2 + (3\Tilde{j}+2) + \Tilde{\ell}_2 (3i+3) + \Tilde{s} + 1.
\end{align*}
Again, we consider this modulo $q+1$ and use the pole bound to arrive at 
\begin{align*}
    3j + \ell_2 (3i+3)+ s = 2 + 3\Tilde{j} + \Tilde{\ell}_2 (3i+3) + \Tilde{s}.
\end{align*}
We may then get $s\equiv 2+\Tilde{s}\mod3$, so $(s,\Tilde{s})=(0,1)$, and we get
\begin{align*}
    3j + \ell_2 (3i+3) = 3 + 3\Tilde{j} + \Tilde{\ell}_2 (3i+3)
    \iff 
    j + \ell_2 (i+1) = 1 + \Tilde{j} + \Tilde{\ell}_2 (i+1).
\end{align*}
Thus $j \equiv 1 + \Tilde{j} \mod (i+1)$, in particular $j = 1 + \Tilde{j}$, since $j, \Tilde{j}\leq i-1$. This implies that $\ell_2 (i+1) =\Tilde{\ell}_2 (i+1)$, so $\ell_2=\Tilde{\ell}_2$, which in turn implies that $k = \Tilde{k}$.
We conclude that
\begin{align*}
    |G_{2b} \cap G_{3b}| &= 
    \sum_{j=0}^{\min \{i-2,m-7-i\}} \sum_{\ell_2=1}^{\fl{\frac{m-6-j}{i+1}}}\left(\fl{\frac{m-6-j-\ell_2(i+1)}{3}} +1 \right)\\
    &=\sum_{j=0}^{\min \{i-2,m-7-i\}} S(j+6).
\end{align*}
Note that $|G_{2b} \cap G_{3b}|=0$ if $i \geq m-6$. 

Putting everything together we get, we are now ready to compute
\begin{equation}\label{eq:cardGxaGyb}
|\cup_{x=1,2,3} (G_{xa} \cup G_{xb})|=\sum_{x=1}^3|G_{xa}|+\sum_{x=1}^3|G_{xb}|-|G_{2a} \cap G_{3a}|-|G_{2b} \cap G_{3b}|.
\end{equation}
As before, the goal is to show that the right-hand side in Equation \eqref{eq:cardGxaGyb} gives $g(\mathcal{F}_m)$, that is to say: $(m-1)(m-2)/2$. We will use the expressions for each of the terms in the right-hand side in Equation \eqref{eq:cardGxaGyb} that we obtained previously. 

Recall that we are assuming at this point that $i \le m-2$. To find manageable expressions while evaluating the right-hand side in Equation \eqref{eq:cardGxaGyb}, we will distinguish some cases. An overview of these cases is as follows:
\begin{itemize}
    \item[] {\bf Case 1:} $m-6 \le i \le m-2$.
    \item[] {\bf Case 2:} $m-7 \ge i$ and $2i > m-5$.
    \item[] {\bf Case 3:} $m-7 \ge i$ and $2i = m-5$.
    \item[] {\bf Case 4:} $m-7 \ge i$ and $2i< m-5$.
\end{itemize}
Now we address these cases one after the other.

\noindent {\bf Case 1:} $m-6 \le i \le m-2$.

$\bullet$ If $i \geq m-3$ (that is $i=m-3$ or $i=m-2$) then $|G_{xb}|=0$ for all $x=1,2,3$ and so is $|G_{2b} \cap G_{3b}|$. Since one also has $|G_{xa}|=|G_x|$ for all $x=1,2,3$ and $|G_{2a} \cap G_{3a}|=|G_2 \cap G_3|$, we get
$$|\cup_{x=1,2,3} (G_{xa} \cup G_{xb})|=\sum_{x=1}^3|G_{xa}|-|G_{2a} \cap G_{3a}|=|G_1 \cup G_2 \cup G_3|=g(\mathcal{F}_m),$$
from the computation in the previous section.
    
$\bullet$ If $i=m-4$ then $|G_{1a}|=|G_1|$, $|G_{2a}|=|G_2|-1$, $|G_{3a}|=|G_3|$, $|G_{2a} \cap G_{3a}|=|G_2 \cap G_3|$, $|G_{1b}|=1$ and $G_{2b}=G_{3b}=\emptyset$, hence $|\cup_{x=1,2,3} (G_{xa} \cup G_{xb})|=g(\mathcal{F}_m)$ follows again from the previous section.

$\bullet$ If $i=m-5$ then $|G_{1a}|=|G_1|$, $|G_{2a}|=|G_2|-3$, $|G_{3a}|=|G_3|-1$, $|G_{2a} \cap G_{3a}|=|G_2 \cap G_3|-1$, $|G_{1b}|=2$, $|G_{2b}|=1$ and $|G_{3b}|=0$ and $G_{2b} \cap G_{3b}=\emptyset$, hence 
$$|\cup_{x=1,2,3} (G_{xa} \cup G_{xb})|=|G_1|+|G_2|-3+|G_3|-1+2+1+0-|G_2 \cap G_3|+1=|G_1 \cup G_2 \cup G_3|=g(\mathcal{F}_m).$$

$\bullet$ If $i=m-6$ and $i \ge 2$ then $|G_{1a}|=|G_1|$, $|G_{2a}|=|G_2|-5$, $|G_{3a}|=|G_3|-3$, $|G_{2a} \cap G_{3a}|=|G_2 \cap G_3|-2$, $|G_{1b}|=2$, $|G_{2b}|=3$, $|G_{3b}|=1$ and $G_{2b} \cap G_{3b}=\emptyset$. If $i=m-6$ and $i=1$, then $|G_{1b}|=3$ and  $|G_{2b}|=2$, but otherwise all cardinalities are the same as in the case $i=m-6$, $i \ge 2$. We obtain
$$|\cup_{x=1,2,3} (G_{xa} \cup G_{xb})|=|G_1|+|G_2|-5+|G_3|-3+2+3+1-|G_2 \cap G_3|+2=|G_1 \cup G_2 \cup G_3|=g(\mathcal{F}_m).$$

\noindent {\bf Case 2:} $m-7 \ge i$ and $2i > m-5$.

Since we already have shown previously that $|G_1|+|G_2|+|G_3|-|G_2 \cap G_3|=g(\mathcal{F}_m)$, we obtain from the cardinalities computed in Step 1-3 in Subsection~\ref{counting_gaps_for_small_P_orders} that
$$|\cup_{x=1,2,3} (G_{xa} \cup G_{xb})|=g(\mathcal{F}_m)+\Delta,$$
where
\begin{align*}
\Delta =&-3m+14+3i+2\fl{\frac{m-4-i}{3}}-2\fl{\frac{(m-4-i)(m-5-i)}{6}}+2\fl{\frac{m-5-i}{3}}\\
&-\fl{\frac{(m-5-i)(m-6-i)}{6}}+\sum_{j=0}^{m-5-i}S(j+4)%\bigg(\fl{\frac{m-4-j-(i+1)}{3}}+1\bigg)
+ 2\sum_{j=0}^{m-6-i}S(j+5)%\bigg(\fl{\frac{m-5-j-(i+1)}{3}}+1\bigg)
.\end{align*}
Here we used that $i \le m-7$ and $2i>m-5$ imply that 
$$S(3)= \fl{\frac{m-4-i}{3}}+1\quad \text{and} \quad S(4)= \fl{\frac{m-5-i}{3}}+1.$$
Similarly one can see that
$$S(j+4)= \fl{\frac{m-5-j-i}{3}}+1\quad \text{and} \quad S(j+5)= \fl{\frac{m-6-j-i}{3}}+1.$$
Using this and Lemma \ref{floor_sums_cheat_sheet}, we obtain that
\begin{align*}
\Delta =&2\fl{\frac{m-4-i}{3}}-2\fl{\frac{(m-4-i)(m-5-i)}{6}}+2\fl{\frac{m-5-i}{3}}\\
&-\fl{\frac{(m-5-i)(m-6-i)}{6}}+\sum_{t=0}^{m-5-i}\fl{\frac{t}{3}}
+ 2\sum_{t=0}^{m-6-i}\fl{\frac{t}{3}}\\
=&2\fl{\frac{m-4-i}{3}}-2\sum_{t=0}^{m-4-i}\fl{\frac{t}{3}}+2\fl{\frac{m-5-i}{3}}+ 
2\sum_{t=0}^{m-6-i}\fl{\frac{t}{3}}\\
=&0.
\end{align*}
Hence $|\cup_{x=1,2,3} (G_{xa} \cup G_{xb})|=g(\mathcal{F}_m)$, just as claimed.

\bigskip
\noindent {\bf Case 3:} $m-7 \ge i$ and $2i = m-5$.

In this case we find $\min\{i-1,m-5-i\}=i-1=m-6-i$, 
$$S(3)= \fl{\frac{m-4-i}{3}}+2\quad \text{and} \quad S(4)= \fl{\frac{m-5-i}{3}}+1,$$
whence
$$|\cup_{x=1,2,3} (G_{xa} \cup G_{xb})|=g(\mathcal{F}_m)+\Delta,$$
with
\begin{align*}
\Delta =&-3m+15+3i+2\fl{\frac{m-4-i}{3}}-2\fl{\frac{(m-4-i)(m-5-i)}{6}}+2\fl{\frac{m-5-i}{3}}\\
&-\fl{\frac{(m-5-i)(m-6-i)}{6}}+\sum_{j=0}^{m-6-i}S(j+4)
+ 2\sum_{j=0}^{m-6-i}S(j+5)\\
=&2\fl{\frac{m-4-i}{3}}-2\fl{\frac{(m-4-i)(m-5-i)}{6}}+2\fl{\frac{m-5-i}{3}}\\
&-\fl{\frac{(m-5-i)(m-6-i)}{6}}+\sum_{t=1}^{m-5-i}\fl{\frac{t}{3}}
+ 2\sum_{t=0}^{m-6-i}\fl{\frac{t}{3}}\\
=&2\fl{\frac{m-4-i}{3}}-2\sum_{t=0}^{m-4-i}\fl{\frac{t}{3}}+2\fl{\frac{m-5-i}{3}}+ 
2\sum_{t=0}^{m-6-i}\fl{\frac{t}{3}}\\
=&0.
\end{align*}

\noindent {\bf Case 4:} $m-7 \ge i$ and $2i < m-5$.

We have 
$$|\cup_{x=1,2,3} (G_{xa} \cup G_{xb})|=g(\mathcal{F}_m)+\Delta,$$
where this time
\begin{align*}
\Delta =&-3m+12+3i+\fl{\frac{m-4-i}{3}}-2\fl{\frac{(m-4-i)(m-5-i)}{6}}+\fl{\frac{m-5-i}{3}}\\
&-\fl{\frac{(m-5-i)(m-6-i)}{6}}+S(3)+S(4)+\sum_{j=0}^{i-1}S(j+4)
+ 2\sum_{j=0}^{i-1}S(j+5)+S(i+5)\\
=& -\frac{(m-i-2)(m-i-3)}{2}+S(3)+S(4)+\sum_{j=0}^{i-1}S(j+4)
+ 2\sum_{j=0}^{i-1}S(j+5)+S(i+5)\\
=& -\frac{(m-i-2)(m-i-3)}{2}+S(3)+2S(4)+3\sum_{j=5}^{i+3}S(j)
+ 2S(i+4)+S(i+5).
\end{align*}
Here the second equality follows by explicit computation considering the three possible congruence classes of $m-4-i$ modulo three.

We will first show that 
\begin{equation}\label{eq:claimjtilde}
\sum_{j=3}^{i+3}S(j)=\fl{\frac{(m-i-1)(m-i-2)}{6}}.
\end{equation}
Let $\tilde{j}$ be the unique integer between $3$ and $i+3$ satisfying that $i+1$ divides $m-\tilde{j}$. To show Equation \eqref{eq:claimjtilde}, observe that 
$$\fl{\frac{m-j}{i+1}}=
\begin{cases}
\frac{m-\tilde{j}}{i+1}, & \text{if } 3 \le j \le \tilde{j},\\
\frac{m-\tilde{j}}{i+1}-1, & \text{if } \tilde{j} < j \le i+3.
\end{cases}$$
Hence
\begin{align*}
\sum_{j=3}^{i+3}S(j) & = \sum_{j=3}^{i+3}\sum_{\ell_2=1}^{\frac{m-\tilde{j}}{i+1}-1} \left(\fl{\frac{m-j-\ell_2(i+1)}{3}}+1\right)+\sum_{j=3}^{\tilde{j}}\left(\fl{\frac{m-j-(m-\tilde{j})}{3}}+1\right)\\
&=\sum_{\ell_2=1}^{\frac{m-\tilde{j}}{i+1}-1}\sum_{j=3}^{i+3} \left(\fl{\frac{m-\ell_2(i+1)-j}{3}}+1\right)+\sum_{j=3}^{\tilde{j}}\left(\fl{\frac{\tilde{j}-j}{3}}+1\right)\\
&=\sum_{\ell_2=1}^{\frac{m-\tilde{j}}{i+1}-1}\sum_{j=0}^{i} \fl{\frac{m-\ell_2(i+1)-j}{3}}+\sum_{j=0}^{\tilde{j}-3}\fl{\frac{\tilde{j}-j}{3}}\\
&=\sum_{\ell_2=1}^{\frac{m-\tilde{j}}{i+1}-1}\left(\sum_{t=0}^{m-\ell_2(i+1)} \fl{\frac{t}{3}}-\sum_{t=0}^{m-(\ell_2+1)(i+1)} \fl{\frac{t}{3}}\right)+\sum_{j=0}^{\tilde{j}-3}\fl{\frac{\tilde{j}-j}{3}}\\
&=
\sum_{t=0}^{m-(i+1)} \fl{\frac{t}{3}}-\sum_{t=0}^{m-\frac{m-\tilde{j}}{i+1}(i+1)} \fl{\frac{t}{3}}+\sum_{j=0}^{\tilde{j}-3}\fl{\frac{\tilde{j}-j}{3}}\\
&=
\sum_{t=0}^{m-(i+1)} \fl{\frac{t}{3}}-\sum_{t=0}^{\tilde{j}} \fl{\frac{t}{3}}+\sum_{j=0}^{\tilde{j}-3}\fl{\frac{\tilde{j}-j}{3}}=\sum_{t=0}^{m-(i+1)} \fl{\frac{t}{3}}.
\end{align*}
At this point Equation \eqref{eq:claimjtilde} follows directly by applying Lemma \ref{floor_sums_cheat_sheet}.

Next observe that 
$$S(i+4)=S(3)-\left(\fl{\frac{m-4-i}{3}}+1\right) \quad \text{and} \quad S(i+5)=S(4)-\left(\fl{\frac{m-5-i}{3}}+1\right).$$
Using this and Equation \eqref{eq:claimjtilde}, we conclude that
\begin{equation}\label{eq:claimjtilde2}
\sum_{j=4}^{i+4}S(j)=\fl{\frac{(m-i-1)(m-i-2)}{6}}-\fl{\frac{m-1-i}{3}},
\end{equation}
and
\begin{equation}\label{eq:claimjtilde3}
\sum_{j=5}^{i+5}S(j)=\fl{\frac{(m-i-1)(m-i-2)}{6}}-\fl{\frac{m-1-i}{3}}-\fl{\frac{m-2-i}{3}}.
\end{equation}
Equations \eqref{eq:claimjtilde}, \eqref{eq:claimjtilde2} and \eqref{eq:claimjtilde3} imply that
\begin{equation*}
\begin{split}
S(3)+2S(4)+3\sum_{j=5}^{i+3}S(j)
+ 2S(i+4)+S(i+5)=\\ 3\fl{\frac{(m-i-1)(m-i-2)}{6}}&-2\fl{\frac{m-1-i}{3}}-\fl{\frac{m-2-i}{3}}    
\end{split}
\end{equation*}
Treating the three congruence classes of $m-i$ modulo three separately, a direct computation then shows that
\begin{equation*}
S(3)+2S(4)+3\sum_{j=5}^{i+3}S(j)
+ 2S(i+4)+S(i+5)=\frac{(m-i-2)(m-i-3)}{2}.    
\end{equation*}
But then we can conclude that $\Delta=0$, just as we wanted.

%\textcolor{ForestGreen}{MARIE this is just a suggestion for readability: the first two lines (equivalent to $|G_1\cup G_2\cup G_3|-|G_{1a}\cup G_{2a}\cup G_{3a}|$) can be simplified a bit if we allow for some cases.}
%\begin{align*}
%    &\fl{\frac{m-4-i}{3}}-2\fl{\frac{(m-4-i)(m-5-i)}{6}}+\fl{\frac{m-5-i}{3}}-\fl{\frac{(m-5-i)(m-6-i)}{6}}\\
%    =&
%    \begin{cases}
%        \frac{m-4-i}{3}-2\frac{(m-4-i)(m-5-i)}{6}+\frac{m-4-i}{3}-1-\frac{(m-5-i)(m-6-i)-2}{6} & \text{ if } m-i-4\equiv 0 \mod3\\
%        \frac{m-5-i}{3}-2\frac{(m-4-i)(m-5-i)}{6}+\frac{m-5-i}{3}-\frac{(m-5-i)(m-6-i)}{6}& \text{ if } m-i-5\equiv 0 \mod3\\
%        \frac{m-6-i}{3}-2\frac{(m-4-i)(m-5-i)-2}{6}+\frac{m-6-i}{3}-{\frac{(m-5-i)(m-6-i)}{6}}& \text{ if } m-i-6\equiv 0 \mod3\\
%    \end{cases}
%\end{align*}
%\textcolor{ForestGreen}{in any case, this plus $-3m+12+3i$ is equal to $-\frac{(m-i-2)(m-i-3)}{2}$. 
%what is left is $s(3) + 2s(4) + 2s(i+4) + s(i+5) + 3 \sum_{j=5}^{i+3}  s(j)$ using the notation from the other file, which can be shown to be equal to $\frac{(m-i-2)(m-i-3)}{2}$, but maybe/probably/hopefully that argument can be shortened as well (and then the case where $2i=m-5$ )}

This completes the proof of Lemma \ref{counting-ismall} and thereby also the proof of Theorem \ref{conj:gaps}.

\section*{Acknowledgements}
This work has been supported by Villum Fonden under Grant VIL52303.

\end{document}